\numberwithin{figure}{section}
\numberwithin{table}{section}
\newtheorem{theorem}{Theorem}[section]
\newtheorem{lemma}[theorem]{Lemma}
\theoremstyle{definition}
\newtheorem{definition}[theorem]{Definition}
\newtheorem{example}[theorem]{Example}
\theoremstyle{remark}
\newtheorem{remark}[theorem]{Remark}
\numberwithin{equation}{section}
\newfont{\tap}{tap scaled 650}
\def \N{{\mathbb N}}
\def \g{{\mathfrak g}}
\def \h{{\mathfrak h}}
\def \Z{{\mathbb Z}}
\def \O{{\mathcal O}}
\def \Q{{\mathbb Q}}
\def \A{{\mathcal A}}
\def \[{[ }
\def \]{] }
\def\F{\mathcal{F}}
\def\t{\widetilde}
\def\u{\underline}
\def\s{\sigma}
\def\h{\widehat}
\def\p{\partial}
\def\x{\mathbf{x}}
\def\y{\mathbf{y}}
\def\cc{\mathbf{c}}
\def\gg{\mathbf{g}}
\def\deg{\mathrm{deg}\,}
\def\PP{{\mathbb P}}
\def\ZZ{{\mathbb Z}}
\def\TT{{\mathbb T}}
\def \B{{\mathcal B}}
\begin{document}

\title{Bases for cluster algebras from orbifolds}

\author{Anna Felikson and Pavel Tumarkin}
\address{Department of Mathematical Sciences, Durham University, Science Laboratories, South Road, Durham, DH1 3LE, UK}
\email{anna.felikson@durham.ac.uk, pavel.tumarkin@durham.ac.uk}
\thanks{AF was partially supported by EPSRC grant EP/N005457/1}

\begin{abstract}
We generalize the construction of the bracelet and bangle bases defined in~\cite{MSW2} and the band basis defined in~\cite{T2} to cluster algebras arising from orbifolds. We prove that the bracelet bases are positive, and the bracelet basis for the affine cluster algebra of type $C_n^{(1)}$ is atomic. We also show that cluster monomial bases of all skew-symmetrizable cluster algebras of finite type are atomic. 

\end{abstract}

\maketitle
\setcounter{tocdepth}{1}
\tableofcontents

\section{Introduction}

Cluster algebras were introduced by Fomin and Zelevinsky~\cite{FZ1} in the effort to understand a construction of canonical bases by Lusztig~\cite{L} and Kashiwara~\cite{K}. A cluster algebra is a commutative ring with a distinguished set of generators called {\em cluster variables}. Cluster variables are grouped into overlapping finite collections of the same cardinality called {\em clusters} connected by local transition rules which are determined by a skew-symmetrizable {\em exchange matrix} associated with each cluster, see Section~\ref{cluster} for precise definitions.

One of the central problems in cluster algebras theory is a construction of good bases. It was conjectured in~\cite{FZ1} that these bases should contain cluster monomials, i.e. all products of cluster variables belonging to every single cluster. Linear independence of cluster monomials in skew-symmetric case was proved by Cerulli Irelli, Keller, Labardini-Fragoso and Plamondon in~\cite{CKLP}, for a general skew-symmetrizable case linear independence was recently proved by Gross, Hacking, Keel and Kontsevich in~\cite{GHKK}. In the finite type cluster monomials themselves form a basis (Caldero and Keller~\cite{CK}).

Bases containing cluster monomials were constructed for various types of cluster algebras. These include ones by Sherman and Zelevinsky~\cite{SZ} (rank two affine type), Cerulli~Irelli~\cite{C1} (affine type $\t A_{2}$), Ding, Xiao and Xu~\cite{DXX} (affine type), Dupont~\cite{D1,D3}, Geiss, Leclerc and Schr\"oer~\cite{GLS}, Plamondon~\cite{P} ({\em generic} bases for acyclic types), Lee, Li and Zelevinsky ({\em greedy} bases in rank two algebras).       

In~\cite{MSW2} Musiker, Schiffler and Williams constructed two types of bases  ({\em bangle} basis $\B^\circ$ and {\em bracelet} basis $\B$) for cluster algebras originating from unpunctured surfaces~\cite{FST,FT,FG0}. A {\em band} basis (we call it $\B^\s$) was introduced by D.~Thurston in~\cite{T2}. All the three bases are parametrized by collections of mutually non-intersecting arcs and closed loops, and all their elements are {\em positive}, i.e. the expansion of any basis element in any cluster is a Laurent monomial with non-negative coefficients. 

In the present paper, we extend the construction of all the three bases to cluster algebras originating from orbifolds.

\begin{theorem}
\label{bases-thm}
Let $\A$ be a cluster algebra with principal coefficients constructed by an unpunctured orbifold with at least two boundary marked points. Then $\B$, $\B^\s$ and $\B^\circ$ are bases of $\A$.

\end{theorem}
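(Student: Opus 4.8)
The plan is to prove that each of the three families $\B$, $\B^\circ$, $\B^\sigma$ is a basis by establishing two separate facts: that each is a spanning set for $\A$, and that each is linearly independent. These two halves require quite different techniques, so I would organize the proof around them.

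For spanning, the natural approach is to show that every cluster monomial lies in the span of each proposed basis, and conversely that each basis element can be expanded in terms of cluster monomials (or at least that the basis elements together with the cluster monomials generate the same space). Concretely, I would adapt the surface-case argument of Musiker–Schiffler–Williams: a general element of the three families corresponds to a collection of non-crossing arcs together with (bangles/bracelets/bands of) closed loops, and the key is a \emph{skein-type relation} (resolving crossings) that lets one rewrite any product of the ``loop'' elements as a linear combination of elements indexed by non-crossing multicurves. I would expect earlier sections of the paper to establish the orbifold analogues of these skein relations — including the new relations forced by the orbifold points (the half-integer / $C_n^{(1)}$ structure), which is exactly where the orbifold case departs from the surface case. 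Granting those, spanning follows by showing that the transition between the ``product'' description and the ``non-crossing'' description is unitriangular, so the two sets span the same algebra, and that this algebra is all of $\A$ because it contains every cluster variable (each arc gives a cluster variable via the orbifold expansion formula).

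For linear independence, I would pass to the associated graded or use a degree/leading-term argument based on the $\mathbf g$-vectors or the exponents appearing in the principal-coefficient Laurent expansions. Since $\A$ has principal coefficients, each element of the family has a well-defined expansion in a fixed initial cluster, and the strategy is to identify for each multicurve a distinguished ``leading term'' (a Laurent monomial, read off from the combinatorics of the curve) so that the matrix expressing the family in the standard monomial basis of the ambient Laurent polynomial ring is triangular with nonzero diagonal. The positivity already asserted in the introduction is helpful here: it guarantees no cancellation can destroy the leading terms. For the arc-only part this reduces to linear independence of cluster monomials, which by \cite{CKLP,GHKK} holds in the skew-symmetrizable generality we need; the genuinely new input is controlling the leading exponents of the bracelet/bangle/band elements built from closed loops on the orbifold.

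The main obstacle, I expect, is the interaction between the closed loops and the orbifold points. In the surface case the skein relations and the leading-term bookkeeping are clean because all curves are genuine simple curves on a surface; on an orbifold the loops may wind around cone points, and the resolution of a self-crossing or of a crossing with an orbifold point produces the ``doubled'' contributions responsible for the skew-symmetrizable (rather than skew-symmetric) exchange matrix. Making the skein calculus consistent there — so that the bracelet/band normalization still yields a genuinely triangular change of basis and so that no unexpected linear relation among distinct multicurves appears — is the delicate step, and I would isolate it as the technical heart of the argument, handling the four orbifold-point types case by case and deferring the routine arc-only verifications to the known surface machinery.
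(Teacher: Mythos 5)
Your spanning argument tracks the paper's: granting the orbifold skein relations (which the paper establishes earlier via unfoldings), a complete resolution of crossings rewrites any product of cluster variables as a non-negative combination of elements indexed by non-crossing multicurves, and the Chebyshev-polynomial relations give a unitriangular change of basis among $\B^\circ$, $\B^\s$ and $\B$, so it suffices to treat $\B^\circ$. One point you gloss over: spanning alone does not show the loop elements \emph{belong} to $\A$; the paper needs a separate argument (its Lemma~\ref{in}, adapting \cite[Proposition~4.5]{MSW2} with a specific multicurve whose resolution produces the closed or semi-closed loop times boundary segments). But this is a repairable omission within your framework.

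The genuine gap is in linear independence. Your plan is a leading-term/triangularity argument, and you correctly identify that its crux is ``controlling the leading exponents of the bracelet/bangle/band elements'' --- but you never supply a mechanism for proving that \emph{distinct} multicurves have \emph{distinct} leading exponents, and without injectivity of the $\gg$-vector map the triangular-matrix argument does not start. Positivity of Laurent expansions cannot substitute for this: two distinct positive elements could a priori share a leading term, and no amount of ``no cancellation'' rules that out. Likewise, splitting the family into an arc-only part (handled by \cite{CKLP,GHKK}) and a loop part does not work, since one must exclude linear relations that \emph{mix} arcs and loops. The paper's solution is the step your proposal is missing: first, an unfolding argument shows each element of $\B^\circ$ has a well-defined leading term and hence a $\gg$-vector (the analogue of \cite[Theorem~5.1]{MSW2}); then the Nakanishi--Zelevinsky tropical duality $(G_t^{B;t_0})^T=C_{t_0}^{B_t^T;t}$ converts $\gg$-vectors into $\cc$-vectors, which for orbifolds are shear coordinates of laminations, so that $\gg(x_C)$ is identified with minus the shear-coordinate vector of an explicit lamination on the \emph{reversed} orbifold $\h\O^*$; finally, the bijection between laminations and $\Z^n$ (and the fact that distinct laminations have distinct shear coordinates, from~\cite{FeSTu3}) shows the map $\gg:\B^\circ\to\Z^n$ is a bijection, after which \cite[Proposition~2.13]{MSW2} yields independence. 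This geometric identification of $\gg$-vectors via duality is the technical heart of the paper's proof of the theorem, not the skein calculus at orbifold points, which you single out instead; that calculus belongs to the earlier sections you are entitled to quote.
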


Our main tools are the tropical duality by Nakanishi and Zelevinsky~\cite{NZ}, and the theory of unfoldings developed in~\cite{FeSTu2,FeSTu3}. The notion of an unfolding was introduced by Zelevinsky, it provides a reduction of problems on (certain) skew-symmetrizable cluster algebras to appropriate skew-symmetric ones. In our case, unfoldings allow us to treat cluster algebras from orbifolds using the results known for cluster algebras from surfaces, in particular, to use the results of~\cite{MSW2}.  

The tropical duality provides a relation between  $\gg$-vectors and $\cc$-vectors of a cluster algebra with principal coefficients. For cluster algebras originating from surfaces and orbifolds, the $\cc$-vectors have an explicit geometric meaning: they can be viewed as collections of shear coordinates of specially constructed laminations~\cite{FGl,FG,FT}. To prove linear independence of the constructed bases, $\gg$-vectors are used in~\cite{MSW2}. We use tropical duality together with results of~\cite{FeSTu3} to supply $\gg$-vectors with geometric meaning: they can be read off from shear coordinates of some laminations on a dual orbifold.         

\medskip
 We also consider positivity properties of the constructed bases. A basis of cluster algebra is called {\it positive} if it has positive structure constants. Positivity of $\B$ for surfaces was conjectured in~\cite{FG0} and proved in~\cite{T2}. We extend this result to the orbifold case.

\setcounter{section}{9}
\setcounter{theorem}{1}
\begin{theorem}
The bracelet basis $\B$ is positive.

\end{theorem}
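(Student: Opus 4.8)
The plan is to reduce positivity over an orbifold to the positivity over surfaces established in~\cite{T2}, using the unfolding machinery of~\cite{FeSTu2,FeSTu3}. Let $\mathcal{O}$ be the unpunctured orbifold and let $S$ be its unfolding: a surface carrying an action of a finite group $G$ with $\mathcal{O}=S/G$, so that the cluster structure of $\A=\A(\mathcal{O})$ is the folding of the cluster structure of $\A(S)$. The unfolding substitution, sending each orbifold cluster variable to the product of the cluster variables of $S$ over the corresponding $G$-orbit (equivalently, over the components of its total preimage under $\pi\colon S\to\mathcal{O}$), is an algebra homomorphism $\iota\colon \A(\mathcal{O})\to \A(S)$ compatible with the exchange relations. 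I would begin by recording this map and its elementary properties in the coefficient setting used to define $\B$.

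The heart of the argument is a single lemma: \emph{$\iota$ sends each element of the bracelet basis $\B=\B(\mathcal{O})$ to an element of the surface bracelet basis $\B(S)$, and sends distinct elements of $\B$ to distinct elements of $\B(S)$.} Geometrically, a collection of disjoint non-crossing arcs and loops on $\mathcal{O}$ lifts under $\pi$ to its total preimage, a $G$-invariant collection of disjoint non-crossing curves on $S$; the associated element of $\B(S)$, namely the product over the components, is exactly the image under $\iota$ of the corresponding multicurve element of $\B$. Two points require care: (i) pending arcs to orbifold points, whose two local lifts join at the branch point to form a single surface arc; and (ii) the Chebyshev normalization defining bracelets. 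Since the covering $\pi$ carries crossings to crossings and smoothings to smoothings, the skein recursion $\mathrm{Brac}_1\cdot\mathrm{Brac}_n=\mathrm{Brac}_{n+1}+\mathrm{Brac}_{n-1}$ is preserved by $\iota$, so a normalized orbifold bracelet maps to the corresponding normalized surface bracelet (or to a product of such bracelets when its lift is disconnected). Injectivity on $\B$ follows from injectivity of the lifting correspondence on isotopy classes of multicurves, as $\gamma$ is recovered from $\pi^{-1}(\gamma)$ by passing to the quotient.

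Granting the lemma, positivity is immediate. Since $\B$ is a basis (Theorem~\ref{bases-thm}), for $b_1,b_2\in\B$ we may write $b_1b_2=\sum_k c_k b_k$, with $c_k$ the structure constants to be shown non-negative. Applying the algebra homomorphism $\iota$ gives
\begin{equation*}
\iota(b_1)\,\iota(b_2)=\sum_k c_k\,\iota(b_k),
\end{equation*}
an expansion whose terms $\iota(b_k)$ are, by the lemma, pairwise distinct elements of $\B(S)$. By uniqueness of expansion in the basis $\B(S)$, each $c_k$ is precisely the coefficient of $\iota(b_k)$ in the $\B(S)$-expansion of the product $\iota(b_1)\iota(b_2)$. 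By the positivity of the surface bracelet basis~\cite{T2}, all such coefficients are non-negative, whence $c_k\ge 0$ for every $k$.

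The main obstacle is the key lemma, and within it the matching of bracelet normalizations across the unfolding, particularly for bracelets encircling orbifold points: one must verify that the orbifold-specific definition of these distinguished loops is exactly the one whose lift obeys the surface skein recursion, so that no negative coefficient is introduced when translating between the two Chebyshev normalizations. Handling these loops, together with the pending-arc lifts at the branch points, is where the geometric input of~\cite{FeSTu3} is needed; once the lemma is in place, the descent of positivity is purely formal.
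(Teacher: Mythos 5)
Your reduction strategy has a fatal flaw at its very first step: the homomorphism $\iota$ you postulate does not exist. Sending each cluster variable of $\A(\h\O)$ to the product of the cluster variables of $\A(S)$ over the components of its preimage is incompatible with the exchange relations, so it does not extend to an algebra homomorphism $\A(\h\O)\to\A(S)$. Take the smallest example: the disc with one orbifold point and three boundary marked points (type $C_2$), whose unfolding is a disc with six marked points (type $A_3$). A triangulation downstairs consists of a pending arc and an ordinary arc, with variables $x_1,x_2$ and exchange matrix having entries $b_{12}=1$, $b_{21}=-2$; upstairs the pending arc has a single lift (a diameter of the hexagon, variable $z_1$) while the ordinary arc has two lifts (variables $z_2,z_3$). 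Flipping the pending arc gives the relation $x_1x_1'=1+x_2^{2}$ downstairs, while upstairs the flip of the diameter gives $z_1z_1'=1+z_2z_3$. Your prescription sends $x_1\mapsto z_1$ and $x_1'\mapsto z_1'$ (both pending arcs lift to single arcs) and $x_2\mapsto z_2z_3$, so applying $\iota$ to the first relation would force $z_1z_1'=1+(z_2z_3)^{2}$, contradicting the second; one can further check that $(1+(z_2z_3)^2)/z_1$ is not even an element of $\A(S)$ (it fails to be Laurent after mutating at $z_2$). The genuine relation between $\A(\h\O)$ and $\A(S)$ established in~\cite{FeSTu3} and used throughout the paper goes in the opposite direction: it is the specialization $x_{\bar\gamma}|_{\h\O}=x_\gamma$ of Remark~\ref{spec}, which identifies the variables within each orbit rather than multiplying them. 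Positivity does not descend formally along that specialization either: the specialization of an element of $\B(S)$ is a positive Laurent polynomial on $\h\O$, but a non-crossing multicurve on $S$ can project to a crossing one on $\h\O$, so proving that such a specialization is a non-negative combination of elements of $\B$ is an atomicity-type statement, not a formality. Hence the "purely formal descent" at the end of your argument has nothing to stand on.

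For comparison, the paper's proof bypasses unfolding entirely. Every thick curve (pending arc or semi-closed loop) is deformed into a thin curve running around the corresponding orbifold points as in Remark~\ref{skein-th} and Fig.~\ref{deform}; the orbifold points are then replaced by punctures, producing a multicurve on a punctured surface; D.~Thurston's theorem~\cite{T2}, which is proved for surfaces with punctures, guarantees that the crossings can be resolved in an order in which contractible loops (the source of the negative constant $-2$) never arise; finally the orbifold points are restored and loops around one or two of them are deformed back into pending arcs and semi-closed loops. The one remaining negative constant in the orbifold skein rules, the $-2$ in the last row of Table~\ref{skein-table}, is neutralized by Remark~\ref{noneg}, which rewrites that resolution as $Brac_2(\gamma_1)+Brac_2(\gamma_2)+2$. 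If you wish to keep an unfolding-flavoured argument, you must first formulate and prove an honest descent lemma along the specialization map; that is where all the real work lies, and it is precisely the work your proposal assumes away.
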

The bangle basis $\B^\circ$ is not positive as shown in~\cite{T}, the band basis $\B^\s$  is conjectured to be positive~\cite{T2}.

A basis is called {\it atomic} if it satisfies the following property: non-negative linear combinations of basis elements are exactly those elements of the algebra whose Laurent expansion is positive in any cluster. If it exists, atomic basis is unique~\cite{SZ}. It is proved in~\cite{C} (see also~\cite{CL}) that the cluster monomial bases of skew-symmetric cluster algebras are atomic. We extend this result to the full generality.

\setcounter{section}{10}
\setcounter{theorem}{1} 

\begin{theorem}
Cluster monomial bases of skew-symmetrizable cluster algebras of finite type are atomic.

\end{theorem}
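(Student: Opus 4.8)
The plan is to reduce the skew-symmetrizable statement to the skew-symmetric case of~\cite{C} and, as the working mechanism, to run a $\gg$-vector dominance argument that extracts the coordinates of a universally positive element. Recall that the cluster monomial basis is atomic precisely when every $z\in\A$ whose Laurent expansion is positive in every cluster lies in the non-negative integral span of the basis. Since in finite type the cluster monomials are already known to form a positive basis (Caldero--Keller together with Laurent positivity), it suffices to prove that each such universally positive $z$ has non-negative coordinates in the cluster monomial basis.

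First I would pass to principal coefficients, so that each cluster monomial $X$ carries a well-defined $\gg$-vector, and record the two structural inputs needed. For every seed $t$ the map $X\mapsto\gg_t(X)$ (the $\gg$-vector in the coordinates of $t$) is injective on cluster monomials, and the Laurent expansion of $X$ in $t$ is $\gg$-vector \emph{pointed}: its dominant monomial is $\x_t^{\gg_t(X)}$, occurring with coefficient $1$, and every other monomial equals $\x_t^{\gg_t(X)}$ times a product of the $\hat y$-variables with non-negative exponents, hence is strictly smaller in the dominance order. The pointedness rests on the $F$-polynomial formalism together with positivity of $F$-polynomials, which in the skew-symmetrizable generality is supplied by~\cite{GHKK}; this is exactly the ingredient that let~\cite{C} treat the skew-symmetric case and that must be imported here.

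With these inputs in hand, the extraction is a finite induction on the support of $z=\sum_m a_m X_m$. Fixing any seed $t_0$, I would choose among the $X_m$ with $a_m\neq 0$ one, say $X_{m_0}$, whose vector $g_0=\gg_{t_0}(X_{m_0})$ is maximal in the dominance order. By pointedness the monomial $\x_{t_0}^{g_0}$ is the dominant term of $X_{m_0}$ with coefficient $1$, whereas for any other $m$ in the support $\gg_{t_0}(X_m)$ is a distinct lattice point and all terms of $X_m$ are dominated by $\x_{t_0}^{\gg_{t_0}(X_m)}$; maximality of $g_0$ then prevents $\x_{t_0}^{g_0}$ from appearing in $X_m$ for $m\neq m_0$. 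Hence the coefficient of $\x_{t_0}^{g_0}$ in the expansion of $z$ equals $a_{m_0}$, and universal positivity forces $a_{m_0}\ge 0$. Subtracting $a_{m_0}X_{m_0}$ keeps $z$ universally positive and strictly shrinks the support, so the induction closes with all $a_m\ge 0$.

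An alternative route, and the one that most literally ``extends~\cite{C} to full generality'', is to lift $z$ through the unfolding $\iota\colon\A\to\widehat{\A}$ of~\cite{FeSTu2,FeSTu3}, under which cluster monomials of $\A$ correspond bijectively to the $G$-invariant cluster monomials of the skew-symmetric finite type algebra $\widehat{\A}$, and seeds of $\A$ to $G$-invariant seeds of $\widehat{\A}$. The main obstacle lies here: one must show that universal positivity of $z$ in $\A$ implies universal positivity of $\iota(z)$ in \emph{all} seeds of $\widehat{\A}$, not merely the invariant ones, so that the atomicity of~\cite{C} applies and its conclusion descends $G$-invariantly to $\A$. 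I expect this transfer of positivity across non-invariant seeds, rather than the bookkeeping of the correspondence, to be the delicate point, which is why the $\gg$-vector dominance argument above is attractive: it sidesteps the issue entirely once $F$-polynomial positivity is granted in the skew-symmetrizable setting.
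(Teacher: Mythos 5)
Your main argument breaks at the induction step. From universal positivity of $z=\sum_m a_m X_m$ you extract, at a single fixed seed $t_0$, only the coefficient of a dominance-\emph{maximal} element of the support, and you then assert that $z-a_{m_0}X_{m_0}$ is again universally positive. Nothing you have established implies this: in a cluster other than $t_0$ you are subtracting one positive Laurent polynomial from another, and the difference can have negative coefficients. In fact the claim ``$z$ universally positive and $a_{m_0}\ge 0$ imply $z-a_{m_0}X_{m_0}$ universally positive'' holds precisely because the \emph{remaining} coefficients are non-negative, i.e. it is equivalent to the atomicity being proved, so the induction is circular. Note also that a single seed can never yield more than the maximal coefficients: for a non-maximal $X_m$, the coefficient of $\x_{t_0}^{\gg_{t_0}(X_m)}$ in $z$ mixes $a_m$ with non-negative contributions from the elements dominating it. The standard repair is to extract \emph{each} $a_m$ at a seed adapted to $X_m$ (one whose cluster contains all factors of $X_m$), where $X_m$ is a genuine monomial; but there the required input is that no term of the expansion of any \emph{other} cluster monomial equals this monomial --- the proper Laurent monomial property. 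That property is exactly the crux in the skew-symmetrizable setting: pointedness and positivity of $F$-polynomials from~\cite{GHKK} only say that all terms have the form $\x^{\gg}\hat{y}^{\nu}$ with $\nu\ge 0$, which does not preclude such a term from having all exponents non-negative, and~\cite{CKLP} proves the property only for skew-symmetric algebras. So the dominance argument does not sidestep the difficulty; it runs straight into it.

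Your alternative route is the one the paper actually takes, but the obstacle you flag --- transferring universal positivity of $z$ to non-invariant seeds of the unfolding $\t\A$ --- never arises, because the positive element $z$ is never lifted. Positivity is used only in $\A$ itself, at clusters adapted to each basis element, and what remains is a purely expansion-theoretic statement in $\A$: the $\x$-expansion of a cluster monomial $x_{C_0}$ cannot contain a term equal to a cluster monomial $x_C$ compatible with $\x$. This is proved by contradiction through the unfolding: the offending term lifts to a term of the $\t\x$-expansion of one lift $\t x_{C_0}^1$, of the form $\prod_{j\in I}\t x_j^{r_j}\prod_{i\notin I}\t x_{i,s_i}^{k_i}/\t x_{i,t_i}^{m_i}$ with $k_i\ge m_i\ge 0$ (the inequality forced by specializing to the monomial $x_C$); one then takes the second lift $\t x_{C_0}^2$ obtained by swapping the two lifts of every factor, whose $\t\x$-expansion is the variable-swap of the first. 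Since $\t x_{C_0}^1$ and $\t x_{C_0}^2$ are compatible, their product is a cluster monomial of the skew-symmetric algebra $\t\A$, and the product of the two matched terms has all exponents $k_i-m_i\ge 0$, hence is not a proper Laurent monomial --- contradicting the proper Laurent monomial property of~\cite{CKLP}. This swap-and-multiply trick, which converts the problem into one about cluster monomials of $\t\A$ at the (invariant) lifted seed while freely using non-invariant cluster monomials, is the idea missing from your proposal; once you have it, no transfer of positivity is needed at all.
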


The bracelet basis for surfaces was conjectured to be atomic in~\cite{MSW2}. In particular, the atomic basis for $\widetilde A_{p,q}$ constructed in~\cite{DT} is precisely the bracelet basis for an unpunctured annulus with $p$ and $q$ points at the boundary components. We use unfoldings to prove the following result.

\setcounter{theorem}{2}

\begin{theorem}
The bracelet basis of the cluster algebra of the affine type $C_n^{(1)}$ is atomic.

\end{theorem}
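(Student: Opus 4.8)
The plan is to deduce atomicity from the surface case via unfolding. Realize the cluster algebra of type $C_n^{(1)}$ as the cluster algebra $\A$ of an orbifold $\O$ carrying two orbifold points of order two, and recall from~\cite{FeSTu2,FeSTu3} that $\O$ is unfolded by the annulus $\t A_{n,n}$ with $n$ marked points on each boundary component, whose cluster algebra is of affine type $A_{2n-1}^{(1)}$; write $\pi\colon \t A_{n,n}\to\O$ for the branched double cover and $\s$ for the deck involution, so that $\pi$ swaps the two boundary circles. The unfolding furnishes a $\s$-equivariant correspondence between seeds of $\O$ and the $\s$-invariant seeds of $\t A_{n,n}$, and hence a ring embedding $\iota\colon\A\hookrightarrow\A(\t A_{n,n})$ landing in the $\s$-invariants, under which the Laurent expansion of $x\in\A$ in an orbifold cluster $C$ is the image, under the substitution identifying the two variables in each $\s$-orbit, of the expansion of $\iota(x)$ in the $\s$-invariant surface cluster $\t C$ lying over $C$.

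First I would set up the bracelet dictionary between $\B$ and the bracelet basis $\t\B$ of $\t A_{n,n}$. An ordinary orbifold arc lifts to a $\s$-orbit of two disjoint arcs, and $\iota$ sends the corresponding element of $\B$ to the single element of $\t\B$ indexed by that two-arc multicurve; a pending arc (at a folding-fixed node) lifts to one $\s$-invariant arc, so again $\iota$ lands on one element of $\t\B$. The remaining, and decisive, case is the distinguished closed loop $\ell\subset\O$ generating the affine direction: it lifts to the $\s$-invariant core loop $\t\ell$ of the annulus, but the two length functions differ by a factor of two, so the orbifold bracelets $\mathrm{Brac}_k(\ell)$ are carried by $\iota$ to Chebyshev combinations of the core bracelets $\mathrm{Brac}_j(\t\ell)$ rather than to single basis elements. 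The heart of the dictionary is to check that this transition is unitriangular with non-negative coefficients, so that $\iota$ identifies $\B$ with a sub-collection of $\s$-symmetric packages of $\t\B$ that is invertible over the non-negative integers.

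With the dictionary in place the argument runs as follows. One inclusion is immediate: every element of $\B$ is positive, so every non-negative combination of bracelets is Laurent-positive in every cluster of $\O$. For the reverse inclusion take $x\in\A$ positive in every orbifold cluster and write $x=\sum_i c_i b_i$ with $b_i\in\B$ and $c_i\in\Z$, using that $\B$ is a basis (Theorem~\ref{bases-thm}). Then $\iota(x)$ is $\s$-invariant and, by the compatibility of Laurent expansions above, positive in every $\s$-invariant surface cluster; granting that this forces positivity in \emph{every} cluster, the atomicity of the bracelet basis of $\t A_{n,n}$---the content of~\cite{DT}, where the atomic basis of $\t A_{n,n}$ is shown to be exactly the bracelet basis---lets us expand $\iota(x)$ non-negatively in $\t\B$. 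Invariance forces these coefficients to be constant on $\s$-orbits; repackaging the orbits through the dictionary then shows the $c_i$ are non-negative, and injectivity of $\iota$ gives $x=\sum_i c_i b_i$ with $c_i\ge 0$, as required.

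Two points carry the real difficulty. The first is the Chebyshev matching for the core loop: one must prove that the non-diagonal transition between the loop bracelets of $\O$ and those of $\t A_{n,n}$ does not destroy non-negativity, which I expect to handle by the Chebyshev recursion satisfied by bracelet elements together with the explicit $\t\B$-expansions of powers of the core loop. The second, and I believe the main obstacle, is upgrading positivity of $\iota(x)$ from the $\s$-invariant clusters---all that the unfolding directly controls---to positivity in every cluster of $\t A_{n,n}$, as required to invoke~\cite{DT}. The plan here is to establish a folded positivity criterion: a $\s$-invariant element that is positive on every $\s$-invariant cluster is automatically positive on every cluster. For the affine annulus this should follow from the explicit, essentially one-parameter description of its positive cone, the core-loop direction being the only non-cluster-monomial feature, so that positivity is already detected on the $\s$-symmetric triangulations.
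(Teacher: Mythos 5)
Your overall strategy --- transfer positivity to the annulus $\t A_{n,n}$ via the unfolding and then invoke the atomicity result of~\cite{DT} --- breaks at the very first transfer step, and this gap is distinct from the two difficulties you flag. The unfolding relates the two Laurent expansions by \emph{specialization}: the expansion of an element in an orbifold cluster is obtained from the expansion of its lift in the invariant surface cluster by identifying the two variables in each $\s$-orbit. This implication only goes downward: if the surface expansion is positive, so is its specialization. You need the converse --- that positivity of $x$ in every orbifold cluster forces positivity of $\iota(x)$ in every $\s$-invariant surface cluster --- and that inference is invalid, because identifying variables can merge distinct monomials and cancel negative coefficients (for instance $x_1^2/x_2 - x_1 + x_2$ specializes under $x_1,x_2\mapsto x$ to the positive Laurent polynomial $x$). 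On top of this, your ``folded positivity criterion'' (a $\s$-invariant element positive on all $\s$-invariant clusters is positive on all clusters) is left as a hope, and proving it would likely be comparable in difficulty to the theorem itself. Two smaller defects: the ring embedding $\iota$ does not exist as described, since an ordinary arc lifts to a two-arc multicurve, so your $\iota$ sends $x_\gamma$ to $x_{\t\gamma_1}x_{\t\gamma_2}$, which specializes to $x_\gamma^2$ rather than $x_\gamma$, breaking the claimed compatibility; and the Chebyshev mixing you anticipate for the core loop does not occur, because by construction the lift of $Brac_k(\gamma)$ for the semi-closed loop $\gamma$ \emph{is} $Brac_k(\t\gamma)$, and lambda lengths of semi-closed loops are normalized ($\lambda_\gamma=e^{l(\gamma)}$, without the factor $\tfrac12$ in the exponent) precisely so that $x_{Brac_k(\t\gamma)}$ specializes to $x_{Brac_k(\gamma)}$.

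The paper's proof avoids both obstacles by never lifting positivity upstairs. Atomicity is reduced, as in~\cite{DT,CL}, to a term-isolation statement: for each basis element $x_C$ one exhibits orbifold triangulations (an arbitrary $T$ containing the arcs of $C$, or the Dupont--Thomas sequence $T_r$ when $C$ contains a bracelet) and a distinguished term occurring in the expansion of $x_C$ but in no other basis element's expansion. Ruling out such an occurrence uses only the downward direction, via a symmetrization trick: if some term $\t x^t$ of the expansion of a lift $x_{\t C_0}$ specialized to the distinguished monomial, its negative exponents would have to be compensated by positive exponents on the $\s$-mirror variables; multiplying $\t x^t$ by its mirror term $\t x^{t'}$ --- a term in the expansion of the mirrored multicurve $\t C_0'$, which is compatible with $\t C_0$, so that $x_{\t C_0\cup\t C_0'}$ is (after trading $Brac_k(\t\gamma)^2$ for $Brac_{2k}(\t\gamma)$ when bracelets occur) governed by a genuine basis element upstairs --- yields a term with no negative exponents, contradicting Lemma~\ref{dtat}(i) (resp.\ Lemma~\ref{dtat}(ii) for the bracelet case). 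If you want to salvage your approach, this symmetrization device is the missing idea: it converts downstairs information into an upstairs contradiction without ever needing to lift positivity through the specialization.
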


\setcounter{section}{1}

Recently, Gross, Hacking, Keel and Kontsevich~\cite{GHKK} constructed a canonical positive {\em theta} basis for every cluster algebra of geometric type. 
An interesting question is the relation between the theta bases and the bases constructed in~\cite{MSW2} and the present paper. The theta basis cannot coincide with the bangle basis since the latter is not positive. According to~\cite{CGMMRSW}, theta bases of cluster algebras of rank two coincide with greedy bases. In particular, for affine algebra $\t A_{1,1}$ the theta basis is precisely the bracelet basis (and not the band basis). Is this always the case, i.e. does the bracelet basis coincide with the theta basis for all surfaces and orbifolds?  

We also note that all the bases in~\cite{MSW2} were constructed for unpunctured surfaces with at least two boundary marked points. It was shown in~\cite{CLS} that the results of~\cite{MSW2} also hold for unpunctured surfaces with a single boundary marked point. Following~\cite{CLS}, one can show that the results of the current paper can also be extended to orbifolds with a single boundary marked point, the proof to appear in~\cite{CT}.

\medskip

\noindent
The paper is organized as follows.

In preparatory Sections~\ref{cluster} and~\ref{surf} we recall basic notions on cluster algebras
and remind the construction of cluster algebras from triangulated bordered surfaces and orbifolds.

In Section~\ref{unf} we define an orbifold unfolding as a ramified covering branching in the orbifold points only. 
As it was mentioned above, making use of unfoldings is one of our main tools in this paper. However, this only works when all curves in consideration
{\it ``lift well''} in the unfolding.
We introduce the notion of a curve which lifts well in a given unfolding and show that for each curve of our interest there exists an unfolding where the curve
lifts well. This technical statement is crucial for our proofs.

In Section~\ref{skein} we build the skein theory for the orbifold case. Section~\ref{bases} is devoted to the construction of bracelet, band and bangle bases $\B$, $\B^\s$ and $\B^\circ$ for the cluster algebras from orbifolds.
We use tropical duality to prove that these sets are bases in Sections~\ref{reps} and~\ref{ind}.

In Section~\ref{pos} we discuss positivity property of the bracelet basis $\B$. Finally, in Section~\ref{atomic} we show that the bracelet basis for affine cluster algebra of type $C_n^{(1)}$ is atomic. We also prove that cluster monomial bases of cluster algebras of types $B_n$, $C_n$ and $F_4$ are atomic.  
\medskip

\subsection*{Acknowledgements}
We would like to thank G.~Musiker, M.~Shapiro and D.~Thurston for numerous stimulating discussions, and G.~Muller and S.~Stella for explaining us details of their paper~\cite{CGMMRSW}.
We are grateful to D.~Thurston for his lecture course ``Curves on surfaces'' in Berkeley, 2012. The paper was partially written in MSRI during the program on cluster algebras. We thank the organizers for the opportunity to participate  in the program, and the institute for a nice working atmosphere.

\section{Basics on cluster algebras}
\label{cluster}

\noindent
We briefly remind  some definitions and notions introduced by Fomin and Zelevinsky in~\cite{FZ1} and~\cite{FZ4}.

\subsection{Cluster algebras}
An integer $n\times n$ matrix $B$ is called \emph{skew-symmetrizable} if there exists an
integer diagonal $n\times n$ matrix $D=diag(d_1,\dots,d_n)$,
such that the product $BD$ is a skew-symmetric matrix, i.e., $b_{ij}d_j=-b_{ji}d_i$.

Let $\PP$ be \emph{a tropical semifield } $\mathrm{Trop\,}(u_1,\dots,u_m)$, i.e. an abelian group freely generated by elements $u_1,\dots,u_m$  with commutative multiplication $\cdot$ and equipped with addition $\oplus$
defined as $$\prod_j u_j^{a_j}\oplus \prod_j u_j^{b_j}=\prod_j u_j^{min(a_j,b_j)} . $$
 The multiplicative group of $\PP$ is \emph{a coefficient group} of cluster algebra. 
$\ZZ\PP$  is the integer group ring. Define $\F$ as the field of rational functions in $n$ independent
variables with coefficients in the field of fractions of $\ZZ\PP$.
$\F$ is called {\it an ambient field}.

\begin{definition}
\emph{A seed} is a triple $(\x,\y,B)$, where
\begin{itemize}
\item
$\x=\{x_1,\dots,x_n\}$ is a collection of algebraically independent rational functions of $n$ variables which generates $\F$ over the field of fractions of $\ZZ\PP$;
\item
$\y=\{y_1,\dots,y_n\}$, is an $n$-tuple of elements of $\PP$ called a \emph{coefficient tuple} of cluster $\x$;
\item
$B$ is a skew-symmetrizable integer matrix (\emph{exchange matrix}).
\end{itemize}
The part $\x$ of a seed $(\x,\y,B)$ is called \emph{cluster}, elements $x_i$ are called \emph{cluster variables}, the part $\y$ is called {\it coefficient tuple}.

\end{definition}

We denote $[x]_+=max(x,0)$.

\begin{definition}[seed mutation]
For any $k$, $1\le k\le n$ we define \emph{the mutation} of seed $(\x,\y,B)$ in direction $k$
as a new seed $(\x',\y',B')$ in the following way:
\begin{equation}\label{eq:MatrixMutation}
b'_{ij}=\left\{
           \begin{array}{ll}
             -b_{ij}, & \hbox{ if } i=k \hbox{ or } j=k; \\
             b_{ij}+\frac{|b_{ik}|b_{kj}+b_{ik}|b_{kj}|}{2}, & \hbox{ otherwise.}\\
           \end{array}
         \right.
\end{equation}
\begin{equation}\label{eq:CoeffMutation}
  y'_i=\left\{
           \begin{array}{ll}
             y_k^{-1}, & \hbox{ if } i=k; \\
             y_iy_k^{[b_{ki}]_+}(y_k\oplus 1)^{-b_{ki}} & \hbox{ if } i\ne k.
           \end{array}
         \right.
\end{equation}

\begin{equation}\label{eq:ClusterMutation}
x'_i=\left\{
           \begin{array}{ll}
             x_i, & \hbox{ if } i\ne k; \\
             \frac{y_k\prod x_j^{[b_{jk}]_+}+\prod x_j^{[-b_{jk}]_+}}{(y_k\oplus 1)x_k}, & \hbox{ otherwise.}
           \end{array}
         \right.
\end{equation}

\end{definition}

\noindent
We write $(\x',\y',B')=\mu_k\left((\x,\y,B)\right)$.
Notice that $\mu_k(\mu_k(\x,\y,B))=(\x,\y,B)$.
Two seeds are called \emph{mutation-equivalent}
if one is obtained from the other by a sequence of seed mutations.
Similarly one says  that two clusters or two exchange matrices are \emph{mutation-equivalent}.

Notice that exchange matrix mutation~(\ref{eq:MatrixMutation}) depends only on the exchange matrix itself.
The collection of all matrices mutation-equivalent to a given matrix $B$ is called the \emph{mutation class} of $B$.

Following~\cite{FZ4}, we define a {\it cluster pattern} by assigning to every vertex $t$ of an $n$-regular tree $\TT_n$ with edges labeled by $\{1,\dots,n\}$ a seed $\Sigma_t=(\x_t, \y_t, B_t)$, such that two vertices $t$ and $t'$ are joined by an edge labeled by $k$ if and only if $ \Sigma_{t'}=\mu_k(\Sigma_t)$. We denote the elements of $\Sigma_t$ by
$$\x_t=(x_{1;t},\dots,x_{n,t}),\quad \y_t=(y_{1;t},\dots,y_{n,t}), \quad B_t=(b_{ij}^t)$$

For any skew-symmetrizable matrix $B$ an \emph{initial seed} $\Sigma_{t_0}=(\x_{t_0},\y_{t_0},B_{t_0})=(\x,\y,B)$ is a collection
$(\{x_1,\dots,x_n\}$, $\{y_1,\dots,y_n\},B)$, where $B$ is the \emph{initial exchange matrix}, $\x=\{x_1,\dots,x_n\}$ is the \emph{initial cluster}, $\y=\{y_1,\dots,y_n\}$ is the \emph{initial coefficient tuple}.

{\it Cluster algebra} $\A(B)$ associated with the skew-sym\-met\-ri\-zab\-le $n\times n$ matrix $B$ is a subalgebra of $\Q(x_1,\dots,x_n)$ generated by all cluster variables of the clusters mutation-equivalent to the initial seed $(\x,\y,B)$.

Cluster algebra $\A(B)$ is called \emph{of finite type} if it contains only finitely many
cluster variables. In other words, all clusters mutation-equivalent to initial cluster contain
only finitely many distinct cluster variables in total.

\begin{definition}\label{def:FinMutType} A cluster algebra  is said to be \emph{of finite mutation type} if it has finitely many exchange matrices.
\end{definition}

An {\it extended exchange matrix} $\tilde B=(b_{ij})$ is an $(n+m)\times n$ matrix whose upper $n\times n$ matrix is $B$ and lower $m\times n$ part $B^0$ encodes the coefficient tuple using the formula
$$
y_j=\prod\limits_{i=1}^m u_i^{b_{n+i,j}}.
$$ 
In terms of the matrix $\tilde B$, the exchange relation~\ref{eq:ClusterMutation} rewrites as
$$
x_k'=x_k^{-1}\left( \prod\limits_{j=1}^n x_j^{[b_{jk}]_+}\prod\limits_{j=1}^m u_j^{[b_{(n+j)k}]_+} +
  \prod\limits_{j=1}^n x_j^{[-b_{jk}]_+}\prod\limits_{j=1}^m u_j^{[-b_{(n+j)k}]_+} \right).
$$

\subsection{Cluster algebras with principle coefficients}
\label{principal}

A cluster algebra has {\it principle coefficients at a seed} $\Sigma_{t_0}=(\x,\y,B)$ if $\PP=\mathrm{Trop\,}(y_1,\dots,y_n)$.

In other words,  a cluster algebra $\A$ with principle coefficients is associated to a $2n\times n$ matrix $\tilde B$,
whose upper part is $B$ and the lower (coefficient) part $B^0$ is $n\times n $ identity matrix.

A cluster algebra with principle coefficients associated to the matrix $B$ is denoted by $\A_\bullet(B)$.

Consider $\Z^n$-grading on $\A_\bullet(B)$ defined by $$\deg(x_i)=\mathbf e_i, \qquad \deg(y_j)=-\mathbf b_j^0,$$
where $\mathbf e_1,\dots,\mathbf e_n$ is the standard basis vectors  in $\Z^n$ ($\mathbf e_i$ has 1 at $i$-th position and 0 at other places) and $\mathbf b^0_j=\sum_i b_{ij}^0\mathbf e_i$ is the $j$-th column of $B^0$.

It is shown in~\cite{FZ4} that the Laurent expression of any cluster variable in any cluster of $\A_\bullet(B)$
is homogeneous with respect to this $\Z^n$-grading. For each cluster variable $x$, the 
 $\gg$-{\it{vector }} $\gg(x)$ with respect to the seed $(\x,\y,B)$ is the multi-degree of the Laurent expansion of $x$ with respect to $(\x,\y,B)$. 

Given the initial seed $\Sigma_{t_0}=(\x,\y,B)$ of $\A_\bullet(B)$ and a seed $\Sigma_{t}=(\x_{t},\y_{t},B_{t})$, we denote by $C^{B;t_0}_t$ the lower part $B_t^0$ of the exchange matrix  $\tilde B_t$ (here $B$ is the exchange matrix at $t_0$). The columns $\cc_{1;t},\dots,\cc_{n,t}$ of $C^{B;t_0}_t$ are called $\cc$-{\it{vectors }} at $\Sigma_{t}$, the matrix $C^{B;t_0}_t$ is called $\cc$-{\it{matrix }} at $\Sigma_{t}$. Similarly, we may denote by  $G^{B;t_0}_t$ the matrix composed of $\gg$-vectors $\gg_{1;t},\dots,\gg_{n,t}$ at $\Sigma_{t}$, i.e. $\gg_{i;t}=\gg(x_{i;t})$. 

According to~\cite[(1.13)]{NZ}, there is a duality between $\cc$- and $\gg$-vectors: $(G_t^{B;t_0})^T=C_{t_0}^{B_t^T;t}$.


\section{Cluster algebras from surfaces and orbifolds}
\label{surf}

In this section we remind the construction of cluster algebras arising from triangulated surface~\cite{FST,FT} and its generalization to the orbifold case~\cite{FeSTu3}.

\begin{remark}
In the most part of this paper we work with unpunctured surfaces/orbifolds, so we will not review tagged triangulations
and will ignore self-folded triangles
(for the details see~\cite{FST}).

\end{remark} 

\subsection{Cluster algebras from surfaces}
Let $S$ be a bordered surface with a finite number of marked points, and with at least one marked point at each boundary component.

A non-boundary marked point is called a {\it puncture}. In the most part of this paper we will have no punctures.

An {\it arc} $\gamma$ is a non-self-intersecting curve with two ends at marked points (may be coinciding) such that
\begin{itemize}
\item[-] except for its endpoints, $\gamma$ is disjoint from the marked points and the boundary;
\item[-] $\gamma$ does not cut out a monogon not containing any marked points;
\item[-] $\gamma$ is not homotopic to a boundary segment.
\end{itemize}

A {\it triangulation} of $S$ is a maximal collection of mutually non-homotopic disjoint arcs (two arcs are allowed to have a common vertex).  

Given a triangulation $T$ of $S$ one builds a skew-symmetric {\it signed adjacency matrix} $B=(b_{ij})$ as follows: 
\begin{itemize}
\item the rows and columns of $(b_{ij})$ correspond to the arcs in $T$; 
\item let $b_{ij}=\sum_{\triangle\in T} b_{ij}^\triangle$, where for every triangle $\triangle\in T$ the number $b_{ij}^\triangle$ is defined in the following way:
$$
b_{ij}^\triangle=
\begin{cases}
1\quad\ \textrm{if the arcs $i$ and $j$ belong to $\triangle$ and follow in $\triangle$ in a clockwise order},\\
$-1$\quad \textrm{if the arcs $i$ and $j$ belong to $\triangle$ and follow in $\triangle$ in a counter-clockwise order},\\
0\quad\ \textrm{otherwise}. 
\end{cases}
$$
\end{itemize}

A mutation $\mu_k$ of the matrix $B$ corresponds to a {\it flip} of the triangulation in the $k$-th edge, where 
a flip is a move replacing a diagonal of a quadrilateral by another diagonal, or, more generally,
replacing an arc $\gamma\in T$ by a unique other arc disjoint from $T\setminus \gamma$.
More precisely, the signed adjacency matrix of the flipped triangulation is $\mu_k(B)$.

Introducing a variable $x_\gamma$ for each $\gamma\in T$ and using the matrix $B$ as initial 
exchange matrix one can build a cluster algebra $\mathcal A(S)$. 

It is shown in~\cite{FST,FT} that 
\begin{itemize}
\item the algebra $\mathcal A(S)$ does not depend on the initial triangulation chosen in $S$;
\item cluster variables of $\mathcal A(S)$ are in bijection with arcs on $S$;
\item several cluster variables belong to one cluster if and only if the corresponding arcs belong to one triangulation.

\end{itemize}

Given a hyperbolic metric on $S$, one can think of marked points as cusps (and thus, the triangles in the triangulation can be thought as ideal triangles). Choose in addition a horocycle around each marked point, and for each arc $\gamma$ define $l(\gamma)$
as the length of the part of $\gamma$ staying away from the horocycles centred in both ends.
A {\it lambda length} of $\gamma$ (as defined in~\cite{Pe}) is $\lambda_\gamma=e^{l(\gamma)/2}$.

These functions $\lambda_\gamma$ are subject to {\it Ptolemy relation} under the flips: $$\lambda_\gamma\lambda_{\gamma'}=
\lambda_\alpha\lambda_\sigma+\lambda_\beta\lambda_\rho,$$ where $\gamma$ and $\gamma'$ are two diagonals of the quadrilateral with sides $\alpha, \beta, \sigma, \rho$. The Ptolemy relation coincides with the exchange relation for cluster variables in $\mathcal A(S)$, so that cluster variables may be interpreted as lambda lengths, 
and we will write $x_\gamma=\lambda_\gamma$ (as a function of lambda lengths of the arcs in the initial triangulation).

Cluster algebras arising from surfaces are of finite mutation type, moreover, it is shown in~\cite{FeSTu1}
that all but finitely skew-symmetric cluster algebras of finite mutation type of rank $n>2 $ are cluster algebras arising from surfaces.

\subsection{Cluster algebras from orbifolds}

To obtain (almost all) skew-symmetrizable cluster algebras of finite mutation type one introduces cluster algebras from triangulated orbifolds~\cite{FeSTu3}. 

 By an {\it orbifold} $\O$ we mean a triple $\O=(S,M,Q)$, where $S$ is a bordered surface with a finite set of marked points $M$, and $Q$
is a finite (non-empty) set of special points called {\it orbifold points}, $M\cap Q=\emptyset$. Some marked points may belong to $\p S$ (moreover, every boundary component must contain at least one marked point; the interior marked points are still called {\it punctures}), while
all orbifold points are interior points of $S$ (later on, as we will supply the orbifold with a metric, the orbifold points will have cone angle $\pi$). By the  {\it boundary} $\p \O$ we mean $\p S$.

An {\it arc} $\gamma$ in $\O$ is a curve in $\O$ considered up to relative isotopy (of $\O\setminus \{M\cup Q\}$) modulo endpoints such that
\begin{itemize}
\item one of the following holds:
\begin{itemize}
\item either both endpoints of $\gamma$ belong to $M$ (and then $\gamma$ is an {\it ordinary arc})
\item or one endpoint belongs to $M$ and another belongs to $Q$ (then $\gamma$ is called a {\it pending arc});
\end{itemize}
\item $\gamma$ has no self-intersections, except that its endpoints may coincide;
\item except for the endpoints, $\gamma$ and $M\cup Q\cup \p \O$ are disjoint;
\item if $\gamma$ cuts out a monogon then this monogon contains either a point of $M$
or at least two points of $Q$;
\item $\gamma$ is not homotopic to a boundary segment.

\end{itemize}

Note that we do not allow both endpoints of $\gamma$ to be in $Q$.

Two arcs $\gamma$ and $\gamma'$ are {\it compatible} if the following two conditions hold:
\begin{itemize}
\item they do not intersect in the interior of $\O$;
\item if both $\gamma$ and $\gamma'$ are pending arcs, then the ends of $\gamma$ and $\gamma'$ that are orbifold points do not coincide { (i.e., two pending arcs may share a marked point, but neither an ordinary point nor a orbifold point)}.

\end{itemize}

A {\it triangulation} of $\O$ is a maximal collection  of distinct pairwise compatible arcs.
The arcs of a triangulation cut $\O$ into {\it triangles}. 

\subsubsection{Weights}
Each orbifold point of $\O$ comes with a {\it weight} $w=1/2$ or $w=2$. 
A pending arc incident to an orbifold point with weight $w$ is assigned with the same weight $w$.
An ordinary arc is assigned with the weight $w=1$. Denote by $w_i$ the weight of $i$-th arc and let
$$
\varepsilon=\left\{
           \begin{array}{ll}
             2 & \hbox{ if $\O$ contains at least one orbifold point of weight $w=1/2$,  }  \\
             1 & \hbox{ otherwise.}
           \end{array}
         \right.
$$

Given a triangulation $T$ of $\O$ one builds a skew-symmetrizable {\it signed adjacency matrix} $B=(b_{ij})$ as follows: 
\begin{itemize}
\item the rows and columns of $B$ correspond to the arcs in $T$ (both ordinary and pending arcs are considered here); 
\item let $b_{ij}=\varepsilon\sum_{\triangle\in T} b_{ij}^\triangle$, where for every triangle $\triangle\in T$ the number $b_{ij}^\triangle$ is defined in the following way:
$$
b_{ij}^\triangle=
\begin{cases}
w_i\quad \textrm{if the arcs $i$ and $j$ belong to $\triangle$ and follow in $\triangle$ in a clockwise order},\\
$-$w_j\ \ \textrm{if the arcs $i$ and $j$ belong to $\triangle$ and follow in $\triangle$ in a counter-clockwise order},\\
0\quad\ \ \textrm{otherwise}. 
\end{cases}
$$

\end{itemize}

Introducing a variable $x_\gamma$ for each $\gamma\in T$ and using the matrix $B=(b_{ij})$ one can build a cluster algebra
$\mathcal A(\O)$. 

The geometric realization of $\mathcal A(\O)$ is obtained via {\it associated orbifold} denoted by $\h\O$.
In the associated orbifold, we substitute all weight $2$ orbifold points by {\it special marked points} and introduce
a hyperbolic metric on $\h\O$ such that
\begin{itemize}
\item the marked points (including special marked points) are cuspidal points on $\h\O$; 
\item the remaining orbifold points (i.e., orbifold points of weight $1/2$) are orbifold points with cone angle
around each of them equal to $\pi$;
\item the triangles in $T$ not incident to special marked points are ideal hyperbolic triangles,  the pending arcs of weight $1/2$ understood as two halves of a side of an ideal triangle glued to each other; the pending arcs of weight $2$ are called {\it double arcs} (in fact, these can be understood as a pair of tagged arcs which are never mutated separately);
\item each special marked point $p$ is endowed with a fixed self-conjugated horocycle, i.e. a horocycle  $h_p$ such that the hyperbolic length of $h_p$ equals $1$.

\end{itemize}

\begin{remark}
In this paper (except for Section~\ref{atomic}) we consider orbifolds without orbifold points of weight $2$. In that case $\h\O$ coincides with $\O$ (where $\O$ is understood as a union of ideal hyperbolic triangles), so we will omit the word ``associated'' and will call $\h\O$ simply ``an orbifold''. Also, by technical reasons, we assume that 
$\h\O$ has at least two marked points.
\end{remark}

Now, one can choose in addition an horocycle around each non-special marked point, 
and for every arc $\gamma$ define $l(\gamma)$
as the length of the part of $\gamma$ staying away from the horocycles centered in both ends (for a pending arc $\gamma$
the length $l(\gamma)$ is understood as the length of the round trip from a horocycle to the orbifold point and back).
The {\it lambda length} of $\gamma$ is defined as $\lambda_\gamma=e^{l(\gamma)/2}$.

It is shown in~\cite{FeSTu3} that lambda lengths of arcs satisfy the exchange relations of the cluster algebra 
$\mathcal \A(\O)$, so that they can be interpreted as geometric realizations of cluster variables. 

\subsection{Coefficients and laminations}

In the case of cluster algebras from surfaces or orbifolds, the coefficients can be visualized using laminations
(or more precisely, shear coordinates of laminations).

By a {\it lamination} on $\h\O$ we mean an integral unbounded measured lamination, i.e. 
a finite collection of non-self-intersecting and mutually disjoint curves on $\h\O$ modulo isotopy;
every curve here is either a closed curve or a curve each of whose ends is of one of the following three types:
\begin{itemize}
\item[-] an unmarked point of the boundary of $\h\O$;
\item[-] a spiral around a puncture contained in $M$ (either clockwise or counter-clockwise);
\item[-] an orbifold point.

\end{itemize}

Also, the following is not allowed:
\begin{itemize}
\item a curve that bounds an unpunctured disc or a disc containing a unique marked, special marked or orbifold point;
\item a curve with two endpoints on the boundary of $\h\O$ isotopic to a piece of boundary containing no marked points
or a single marked point;
\item two curves starting at the same orbifold point (or two ends of the same curve starting at the same orbifold point);
\item curve spiralling in or starting at any special marked point.
\end{itemize}

Given a lamination $L$ on a surface, the {\it shear} coordinates of $L$ with respect to a given triangulation $T$ 
(containing no self-folded triangles) were introduced by  W.~Thurston~\cite{Th} and  can be computed as follows. 
For each arc $\gamma$ of $T$ the corresponding
{\it shear coordinate} of $L$ with respect to the triangulation $T$, denoted by $b_\gamma (T,L)$, is defined as a sum of
contributions from all intersections of curves in $L$ with the arc $\gamma$. Such an intersection contributes $+1$ (resp, -1)
to  $b_\gamma (T,L)$ if the corresponding segment of the curve in $L$ cuts through the quadrilateral surrounding $\gamma$ as
shown in Fig.~\ref{lam} on the left (resp, on the right).

\begin{figure}[!h]
\begin{center}
\psfrag{g}{\scriptsize $\gamma$}
\psfrag{1}{\small $+1$}
\psfrag{-1}{\small $-1$}
\epsfig{file=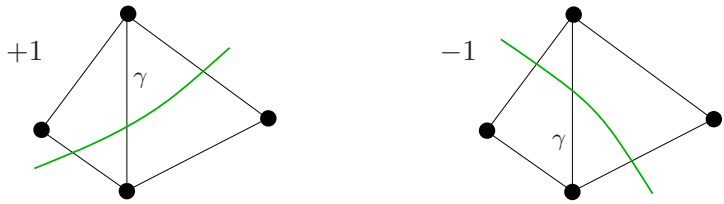,width=0.6\linewidth}
\caption{Defining the shear coordinate  $b_\gamma (T,L)$ on surfaces.}
\label{lam}
\end{center}
\end{figure}

The construction is extended to the case of arbitrary (tagged) triangulation of a surface in~\cite{FT}, and to the case of an orbifold in~\cite{FeSTu3}. In particular, the following theorem holds.

\begin{theorem}[\cite{FeSTu3}, Theorem~6.7]
\label{ex-uniq}
Let $\h\O$ be an associated orbifold.
For a fixed  
triangulation $T$ of $\h\O$, the map
$$ L \to (b_\gamma(T,L))_{\gamma \in T}$$
is a bijection between laminations on $\h\O$ and $\Z^n$.

\end{theorem}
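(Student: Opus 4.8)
```latex
The statement to prove is Theorem~\ref{ex-uniq}: for a fixed triangulation $T$ of an associated orbifold $\h\O$, the shear-coordinate map $L \mapsto (b_\gamma(T,L))_{\gamma\in T}$ is a bijection between laminations on $\h\O$ and $\Z^n$. The plan is to reduce the orbifold statement to the already-known surface statement via a \emph{covering} (unfolding) argument, and then to handle the pending arcs and orbifold points by a direct local analysis. Indeed, the cleanest route is probably to prove \textbf{surjectivity} and \textbf{injectivity} separately, since the orbifold case differs from the surface case of~\cite{FT} only in the local contributions near orbifold points and pending arcs.

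First I would recall that for genuine surfaces the analogous map is a bijection (this is the result of W.~Thurston and~\cite{FT} cited just before the statement). The idea is then to take a double (or appropriate ramified) cover $\pi\colon \t S \to \h\O$ branched exactly at the weight-$1/2$ orbifold points, so that $T$ lifts to an honest triangulation $\t T$ of the surface $\t S$ and laminations on $\h\O$ lift to $\Z/2$-invariant laminations on $\t S$. The key compatibility to establish is that shear coordinates are multiplicative/additive under the cover: the shear coordinate $b_\gamma(T,L)$ of a pending arc $\gamma$ should equal (up to the normalization built into the weights) the shear coordinate of its lift $\t\gamma$ in $\t T$ with respect to the lifted lamination $\t L$. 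Once this correspondence is set up, surjectivity and injectivity of the orbifold map follow from the corresponding properties of the surface map, restricted to the subspace of $\Z/2$-invariant data.

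Concretely, I would organize the argument as follows. \textbf{Injectivity:} suppose two laminations $L,L'$ give the same shear coordinates with respect to $T$. Lift to $\t L,\t L'$ on $\t S$; by the multiplicativity established above they have equal shear coordinates with respect to $\t T$, hence $\t L=\t L'$ by the surface bijection, and pushing back down gives $L=L'$. \textbf{Surjectivity:} given an arbitrary integer vector $(n_\gamma)_{\gamma\in T}\in\Z^n$, one must produce a lamination realizing it. Here I would first lift the data to a $\Z/2$-invariant integer vector on $\t T$ (being careful that the lift of a pending-arc coordinate lands in the invariant sublattice), invoke surface surjectivity to obtain an invariant lamination $\t L$ on $\t S$, and then descend $\t L$ to a lamination $L$ on $\h\O$. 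One must check that the descended object genuinely satisfies the admissibility constraints listed in the definition of a lamination on $\h\O$ (for instance, that no curve has two ends at the same orbifold point, and that curves do not spiral into special marked points).

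The main obstacle, I expect, is the bookkeeping at the orbifold points and pending arcs: one must verify that the local shear-coordinate contributions defined directly on $\h\O$ agree exactly with the contributions computed upstairs on $\t S$, and that the $\Z/2$-invariance is preserved in both directions of the correspondence. In particular, the weight-$2$ (double arc) case and the weight-$1/2$ (true orbifold point) case behave differently, so the quadrilateral surrounding a pending arc differs from the surface picture of Fig.~\ref{lam} and the correct normalization of the $\pm1$ contributions must be pinned down. This local case analysis is the genuine content; everything else is a formal transfer of the surface result through the cover. I would isolate this local verification as a preliminary lemma so that the injectivity and surjectivity arguments above read as short formal consequences of the surface bijection together with that lemma.
```
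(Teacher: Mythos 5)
The first thing to say is that the paper contains no proof of this statement to compare yours against: Theorem~\ref{ex-uniq} is imported verbatim from \cite{FeSTu3} (Theorem~6.7 there, as the theorem header indicates), and the proof in that source is a direct one --- the local rules defining $b_\gamma(T,L)$, including the modified picture of Fig.~\ref{lam} near a pending arc, are analyzed and the lamination is reconstructed from an arbitrary integer vector triangle by triangle, extending W.~Thurston's and Fomin--Thurston's surface argument. Your covering-space reduction is therefore a genuinely different route, and it is very much in the spirit of how the present paper uses unfoldings elsewhere; unfortunately, as written it does not close.

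There are three concrete gaps. First, what you yourself call ``the genuine content'' --- the preliminary lemma matching the orbifold shear-coordinate rules at pending arcs with the surface rules upstairs, with the correct normalization --- is never proved; since you describe everything else as ``a formal transfer,'' the proposal reduces the theorem to an unproved lemma rather than proving it. Second, the formal transfer itself is not available in the stated generality of the theorem, which concerns an arbitrary associated orbifold $\h\O$: a closed genus-zero orbifold with exactly one orbifold point admits no cover branched exactly at the orbifold points (the paper records precisely this obstruction inside the proof of Lemma~\ref{lifts well}); when a cover exists only as an iterated double cover (one orbifold point, positive genus, no boundary) the deck group is not $\Z/2$ and your invariance argument does not apply verbatim; and for associated orbifolds with special marked points the lift is a \emph{punctured} surface, where the unpunctured Fomin--Thurston bijection you invoke is not the relevant statement (spiraling directions and tagging enter), and the prohibition on spiraling into special marked points downstairs does not correspond to an invariant sublattice of coordinates upstairs. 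Third, both directions of your argument silently identify ``isotopy classes of invariant laminations'' with ``invariant isotopy classes'': to descend $\t L$ you need a representative that is invariant as a point set, and for injectivity you need two isotopic invariant laminations to be \emph{equivariantly} isotopic, since equality of isotopy classes upstairs is all that the surface bijection gives you. This last point is fixable by passing to geodesic representatives with respect to a deck-invariant hyperbolic metric, but it is a necessary step that your sketch omits, and together with the first two points it means the argument is sound only for the restricted class of orbifolds (with boundary, no special marked points) that the rest of the paper works with, not for the theorem as stated.
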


A {\it multi-lamination}
is a finite set of laminations $\mathbf L=(L_{1},\dots,L_m)$.

Given a multi-lamination  $\mathbf L=(L_{1},\dots,L_m)$ and an initial triangulation $T_{t_0}$, compose an extended signed adjacency matrix $\tilde B$ of the signed adjacency matrix $B$ and the $m\times n$ matrix of the shear coordinates of $\{L_i\}$ in $T_{t_0}$. It appears (see~\cite{FT,FeSTu3}) that the transformation of the matrix $\tilde B$  under a flip coincides with its transformation under the corresponding mutation.  This implies that, given a multi-lamination $\mathbf L$, we can consider shear coordinates of $\mathbf L$ in the triangulation $T_t$ as coefficients in the seed $\Sigma_{t}$.

In particular, we can keep track of principle coefficients, for which we need {\it elementary laminations}.
For each arc $\gamma$ of a triangulation $T_{t_0}$ one can choose the lamination $L_\gamma$ such that  $b_\gamma (T_{t_0},L_\gamma)=1$
and  $b_\sigma (T_{t_0},L_\gamma)=0$ for $\sigma\ne \gamma$ (it does exist and is unique by Theorem~\ref{ex-uniq}, in the unpunctured case it may be obtained from the arc $\gamma$ by shifting all (non-orbifold) ends clockwise, see Fig.~\ref{elemlam} for examples).

\begin{figure}[!h]
\begin{center}
\epsfig{file=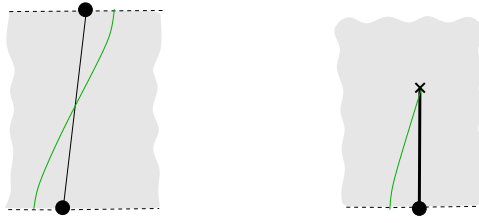,width=0.4\linewidth}
\caption{Elementary laminations on an unpunctured surface/orbifold}
\label{elemlam}
\end{center}
\end{figure}

Composing a multi-lamination $\mathbf L=(L_{\gamma_1},\dots,L_{\gamma_n})$ of elementary laminations for all curves $\{\gamma_1,\dots,\gamma_n\}$ of $T_{t_0}$, we obtain an extended signed adjacency matrix with the coefficient part $B^0=I$. Now we can identify the $\cc$-matrix $C^{B;t_0}_t$ at $\Sigma_{t}$ with the matrix of shear coordinates of $\mathbf L$ in the triangulation $T_t$. 

\section{Unfoldings and curves on orbifolds}
\label{unf}

\subsection{Orbifold unfoldings}
The idea of {\it unfolding} was suggested by Andrei Zelevinsky. Roughly speaking, an unfolding of a skew-symmetrizable matrix $B$ is a skew-symmetric matrix $C$ such that the properties of the cluster algebra $\mathcal A(B)$ can be read off the properties of the cluster algebra $\mathcal A(C)$ (see~\cite{FeSTu2,FeSTu3} for details). It turns out that not every skew-symmetrizable algebra has an unfolding, however, in finite mutation type an unfolding almost always exists and proves to be useful. For a cluster algebra from an orbifold an unfolding can be provided by an algebra from a surface, where the surface is a ramified covering of the orbifold branching in the orbifold points only~\cite{FeSTu3} and with all ramification indices equal to two. In this paper, we need a notion of a {\it partial unfolding} which satisfies some weaker assumptions.

\begin{definition}
Given an orbifold  $\h\O$, a ramified covering $\h S$ of $\h\O$ branching in the orbifold points only with all ramification indices equal to two is called a {\it partial unfolding}. In addition, $S=\h S$ is called an  {\it unfolding} if it contains no orbifold points (or, equivalently, if every orbifold point is a branch point). We supply $\h S$ with a hyperbolic metric such that the covering map is a local isometry everywhere except for the ramification points.

By the {\it degree} of a partial unfolding we mean the degree of the covering. In this paper we will only consider partial unfoldings of degree $2^k$.
\end{definition}

By a {\it complete lift} of a curve $\gamma\subset \h\O$ to a partial unfolding $\h S$ we mean the union of all lifts of $\gamma$.

Given a partial unfolding $\h S$ of $\h\O$, each triangulation on $\h\O$ lifts to a triangulation on $\h S$.  Moreover, since the covering map is a local isometry, it preserves the lengths of arcs, and thus it preserves lambda lengths (one can note that, due to the definition of a length of pending arc, the covering map also preserves lengths of pending arcs). Therefore, each cluster $\mathbf x$ in $\mathcal A(\h\O)$ lifts to a cluster $\bar{\mathbf x} $ in $\mathcal A(\h S)$ (each cluster variable of $\bar{\mathbf x}$ is a lift of some cluster variable of $\mathbf x $). These lifts agree with mutations~\cite{FeSTu3} (see also Remark~\ref{spec}).

For any function $f$ in variables of the cluster $\bar{\mathbf x} $ by the
{\it specialization} $f|_{\h\O}$ we mean the function in variables of $\mathbf x $ where each cluster variable of $\bar{\mathbf x} $ 
is substituted by its image in $\mathbf x $.

\begin{remark}
\label{spec}
Let $S$ be a degree $n$ partial unfolding of $\h\O$, let $\mathbf x $ be a cluster on $\h\O$ and $\bar{\mathbf x} $ be its lift on $\h S$. Let $\gamma$ be an arc or a pending arc on $\h\O$ and $\bar \gamma$ be a connected component of its lift, $x_\gamma\in\mathbf x$. Write $x_\gamma$ as the Laurent expansion in the cluster  $\mathbf x $, and $x_{\bar \gamma}$ as the Laurent expansions in the cluster $\bar {\mathbf x} $.

Then the definitions above imply $x_{\bar\gamma }|_{\h\O}=x_\gamma$.

\end{remark}

\subsection{Curves on orbifolds}

Throughout the paper, all curves on surfaces and orbifolds are considered up to isotopy. 
In particular, every intersection of a family of curves is thought as a simple transversal intersection of two curves. Further, we assume a number of self-intersections of any curve to be finite. 
By the length of a curve we mean the length of the geodesic representative of the isotopy class.

By a {\it regular point} of an orbifold $\h\O$ we mean any interior point except for orbifold points.

A curve is called {\it separating} if it cuts the surface or orbifold into more than one connected components.
Otherwise, it is called {\it non-separating}.

We consider several types of curves: with two ends in marked points (possibly coinciding), with one end in a marked point and another in an orbifold point, with two ends in orbifold points (possibly coinciding) or a closed curve. We call these curves an {\it ordinary curve},  a {\it pending curve}, a {\it semi-closed curve} and  
a {\it closed curve} respectively. If the curves of these types are in addition non-self-intersecting we call them
 an {\it arc},  a {\it pending arc}, a {\it semi-closed loop} and  
a {\it closed loop} (these definitions agree with definitions of arcs and pending arcs given above). Graphically, we denote orbifold points by crosses, their lifts to (partial) unfoldings by small circles and the marked points by bold circles. The curves incident to an orbifold point 
are drawn thick. We also say that pending curves and semi-closed curves are {\it thick} curves and all other types of curves are {\it thin}. We summarize the definitions above in Table~\ref{curve-def}.

\begin{table}
\caption{Types of curves on orbifolds}
\label{curve-def}
\begin{tabular}{|c|c|c|c|}
\hline
& curves & non-self-intersecting curves&\\
\hline  
\raisebox{-4pt}{\epsfig{file=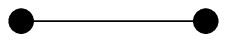,width=0.2\linewidth}}& \raisebox{10pt}{ordinary curve} & \raisebox{10pt}{arc}&\raisebox{-10pt}{thin}\\
\cline{1-3}
\raisebox{-4pt}{
\epsfig{file=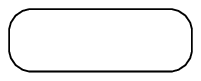,width=0.2\linewidth}}& \raisebox{10pt}{closed curve}& \raisebox{10pt}{closed loop}&\\
\hline
\raisebox{-4pt}{\epsfig{file=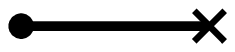,width=0.2\linewidth}}& \raisebox{10pt}{pending curve}& \raisebox{10pt}{pending arc}&\raisebox{-10pt}{thick}\\
\cline{1-3}
\raisebox{-4pt}{
\epsfig{file=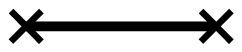,width=0.2\linewidth}}& \raisebox{10pt}{semi-closed curve}& \raisebox{10pt}{semi-closed loop}&\\
\hline
\end{tabular}
\end{table}

\begin{remark}
The term ``pending arc'' was introduced in~\cite{Ch} and~\cite{ChM}. 
The term ``semi-closed loop'' is introduced since in a degree $2$ partial unfolding branching in the both ends of the curve it lifts to a closed loop.

\end{remark}

\begin{remark}
A curve connecting an orbifold point to itself will be considered as self-intersecting. By closed curves we mean ones not contractible to a point or an orbifold point.

\end{remark}

\subsection{Curves in unfoldings}

\begin{definition}
\label{def well}
Let $\h S$ be a degree $n$ partial unfolding of $\h\O$. 
A closed curve $\gamma\in \h\O$ {\it lifts well} to $\h S$ if the complete lift of $\gamma$ in $\h S$ consists of $n$ disjoint closed curves.
An ordinary curve, a pending curve and a semi-closed curve are always said to lift well.

\end{definition}

\begin{example}
We show an example of an unfolding and a closed curve which does not lift well. Let $\h\O$ be an orbifold of genus $g>0$ with some marked points and precisely two orbifold points, and let $\gamma$ be a closed curve on $\h\O$. Take a path $\alpha$ connecting two orbifold points and crossing $\gamma$ exactly once. We build a degree $2$ partial unfolding $\h S$ as follows. Cut the orbifold along $\alpha$, take two copies of the obtained orbifold and glue them together to obtain a connected surface. Then the curve $\gamma$ lifts to one closed curve of length $2l$ (where $l$ is the length of $\gamma$), so it does not lift well.
\end{example}

The following lemma plays the key role in the proof of skein relations for orbifold (see Section~\ref{skein-proof}).

\begin{lemma}
\label{lifts well}
Let $\h\O$ be an unpunctured orbifold.
Let  $\gamma\subset \h\O$ be a closed curve. 
Then there exists an unfolding $S$ such that $\gamma$ lifts well in $S$.

\end{lemma}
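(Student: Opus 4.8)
The plan is to convert the statement into a question about finite quotients of the orbifold fundamental group and then to solve that question with a suitable $2$-group.

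Since $\h\O$ is an unpunctured bordered orbifold whose only cone points are the weight-$1/2$ orbifold points $q_1,\dots,q_m$ (each of cone angle $\pi$, i.e.\ of order $2$), its underlying surface-with-boundary has free fundamental group, and adjoining the relations $\ell_i^2=1$ gives the free product
\begin{equation*}
\Gamma:=\pi_1^{\mathrm{orb}}(\h\O)\;\cong\; F*\underbrace{(\Z/2)*\cdots*(\Z/2)}_{m},
\end{equation*}
where $F$ is free and the $i$-th free factor is generated by the class $\ell_i$ of a small loop around $q_i$. A degree $2^k$ covering $S\to\h\O$ which is an \emph{unfolding} (every orbifold point a genuine branch point of ramification index $2$, no cone points upstairs) is exactly a connected orbifold cover whose monodromy homomorphism $\rho\colon\Gamma\to G$ lands in a finite $2$-group $G$ and has $\rho(\ell_i)\neq e$ for all $i$; indeed $\ell_i^2=e$ forces $\rho(\ell_i)$ to have order $1$ or $2$, and order $2$ is precisely ramification index $2$. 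For such a regular cover the complete lift of $\gamma$ has $|G|/\operatorname{ord}(\rho(\gamma))$ components, so $\gamma$ \emph{lifts well} iff $\rho(\gamma)=e$; and connectedness is free, since replacing $G$ by $\rho(\Gamma)$ preserves everything and makes $\rho$ onto. Thus the Lemma is equivalent to: \emph{there is a finite $2$-group $G$ and a homomorphism $\rho\colon\Gamma\to G$ with $\rho(\ell_i)\neq e$ for all $i$ and $\rho(\gamma)=e$.}

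The generic case is handled by abelian covers. Homomorphisms $\Gamma\to(\Z/2)^k$ factor through $\Gamma^{\mathrm{ab}}\otimes\Z/2$; geometrically a single $\Z/2$ factor is the double cover cut along a system of disjoint arcs joining orbifold points in pairs, with the monodromy of a loop equal to its mod-$2$ intersection number with the arcs. For each $i$ I would look for $\phi_i\in\operatorname{Hom}(\Gamma,\Z/2)$ with $\phi_i(\ell_i)=1$ and $\phi_i(\gamma)=0$; since $\ell_i$ generates its own $\Z/2$ summand of $\Gamma^{\mathrm{ab}}$ we have $[\ell_i]\neq 0$, so such $\phi_i$ exists precisely when $[\gamma]\neq[\ell_i]$ in $\Gamma^{\mathrm{ab}}\otimes\Z/2$. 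Assembling $\rho=(\phi_1,\dots,\phi_m)\colon\Gamma\to(\Z/2)^m$ then gives $\rho(\ell_i)\neq e$ for all $i$ and $\rho(\gamma)=e$, completing the argument whenever $\gamma$ is not $\Z/2$-homologous to any single peripheral class. (Note $\Gamma$ is residually a finite $2$-group by Gruenberg's theorem, being a free product of $F$ and finite $2$-groups, so plenty of $2$-group quotients are available.)

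The hard part is the \emph{homology-coincidence case}, where $[\gamma]=[\ell_i]$ in $\Gamma^{\mathrm{ab}}\otimes\Z/2$ for some $i$ although $\gamma$ is not homotopic to a loop around $q_i$ — for instance $\gamma=\ell_i\,[a,b]$ with $[a,b]$ a commutator of handle generators. Here every abelian $\rho$ with $q_i$ branched has $\rho(\gamma)=\rho(\ell_i)\neq e$, so one must use a \emph{non-abelian} $2$-group, and this is exactly where the hypothesis that $\gamma$ is a genuine closed curve — not contractible onto an orbifold point, hence not conjugate in $\Gamma$ to any $\ell_i$ — must be used. The idea is that the peripheral contribution of $\gamma$ can be cancelled by a commutator coming from $F$ (the handles of the subsurface that $\gamma$ bounds, which are present precisely because $\gamma$ is not isotopic to $\ell_i$): in the Heisenberg group $H_3(\F_2)$ of order $8$, with order-$2$ generators $x,y$ and central $z=[x,y]$ of order $2$, setting $\rho(a)=x,\ \rho(b)=y,\ \rho(\ell_i)=z$ yields $\rho(\gamma)=z\,[x,y]=z\cdot z=e$ while $\rho(\ell_i)=z\neq e$; residual $2$-finiteness of $\Gamma$ then lets one enlarge $G$ so that the remaining $\rho(\ell_j)$ are simultaneously nontrivial without reviving $\rho(\gamma)$. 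Carrying this cancellation out uniformly for an arbitrary $\gamma$ in cyclically reduced normal form — equivalently, showing that each $\ell_i$ survives in a finite $2$-group quotient of $\Gamma/\langle\langle\gamma\rangle\rangle$ — is the one genuinely non-formal step and the main obstacle; once it is in hand, taking the image group and its associated cover produces the desired unfolding $S$.
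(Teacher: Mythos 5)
Your reduction of the Lemma to group theory is correct and clean: writing $\Gamma=\pi_1^{\mathrm{orb}}(\h\O)\cong F\ast(\Z/2)\ast\cdots\ast(\Z/2)$ (using that the unpunctured orbifolds considered in the paper have non-empty boundary, so the underlying surface group is free), an unfolding in which $\gamma$ lifts well amounts exactly to a surjection $\rho$ from $\Gamma$ onto a finite $2$-group with $\rho(\ell_i)\neq e$ for all $i$ and $\rho(\gamma)=e$; and your abelian argument correctly settles every case in which $[\gamma]\neq[\ell_i]$ in $\Gamma^{\mathrm{ab}}\otimes\Z/2$ for all $i$. But the remaining case is not a loose end to be deferred: it is the entire content of the Lemma, and you leave it unproved --- as you yourself say, it is ``the one genuinely non-formal step and the main obstacle.'' A single worked example ($\gamma=\ell_i[a,b]$ killed in the order-$8$ Heisenberg group) is not an argument for an arbitrary conjugacy class with $[\gamma]=[\ell_i]$ mod $2$, so the proposal is an incomplete proof. (The other vague step, enlarging $G$ so that the remaining $\ell_j$ survive, is in fact trivial to repair: take the direct product of $\rho$ with the abelian characters $\phi_j$, $j\neq i$, which exist because $[\gamma]=[\ell_i]\neq[\ell_j]$; no residual finiteness is needed. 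The coincidence case itself is the real gap.)

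Moreover, the mechanism you propose for the missing step rests on a wrong picture. You assert that when $[\gamma]=[\ell_i]$ mod $2$ but $\gamma$ is not a loop around $q_i$, the discrepancy is accounted for by ``a commutator coming from $F$'' (handles). This is false: the coincidence case already occurs on a disc with three orbifold points, e.g.\ $\gamma=\ell_1(\ell_2\ell_3)^2$, where $F$ is trivial and there are no handles at all --- there the cancellation must come from dihedral relations among the $\ell_j$ (send $\ell_2,\ell_3$ to two reflections in $D_4$ whose product is a rotation of order $4$, and $\ell_1$ to the central involution); and it occurs for $\gamma=\ell_1a^2$ on an annulus with one orbifold point, where the cancellation comes from a square, via $\Z/4$, not from a commutator. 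A uniform construction must treat squares, products of several $\ell_j$'s, and commutators simultaneously for an arbitrary cyclically reduced word, and that is exactly what the paper's proof accomplishes by topological means: it cuts $\h\O$ along $\gamma$, pairs up orbifold points by disjoint paths crossing $\gamma$ evenly (the abelian part, dual to your mod-$2$ homology computation), uses the boundary marked points --- which your hard case never invokes --- to absorb one leftover bad point, and then passes to the resulting degree-$2$ cover and runs the cutting argument again upstairs (the ``Claim'' in the paper), producing an unfolding of degree at most $8$; in group-theoretic terms this builds precisely the non-abelian iterated $\Z/2$-extension your sketch is missing. Until you supply that construction for the coincidence case, the proposal does not prove the Lemma.
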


\begin{proof}
In~\cite{FeSTu3} we used the following four ways to construct degree two (or four) covering branched in orbifold points only:
\begin{itemize}
\item[(1)] if the number of orbifold points is even, group orbifold points in pairs, cut the orbifold along mutually non-intersecting semi-closed loops connecting paired orbifold points, take two copies and glue them along the cuts to obtain one surface;
\item[(2)] if the number of orbifold points is odd and bigger than one, apply (1) to just one pair of orbifold points; this will result in an orbifold with an even number of orbifold points, so (1) can be applied;
\item[(3)] if there is exactly one orbifold point and the boundary is non-empty, cut the orbifold along a pending arc ending at the boundary, then glue together two copies to obtain a surface;
\item[(4)] if there is exactly one orbifold point, the boundary is empty, and the genus is positive, construct an unramified degree two covering by cutting the orbifold along a closed non-separating loop and gluing two copies together to double the number of orbifold points, then apply (1). 
\end{itemize}
Note that there is no required covering of closed orbifolds of genus zero with a unique orbifold point. 

We now want to combine these to construct a covering where  $\gamma$ lifts well.

After cutting $\h\O$ along  $\gamma$ the following connected components $\h S_i$ may appear:
\begin{itemize}
\item[(i)] $\h O_i$ contains no orbifold points;
\item[(ii)] $\h O_i$ contains orbifold points and at least one boundary component of $\h\O$;
\item[(iii)] $\h O_i$ contains orbifold points, at least one handle and no boundary components of $\h\O$;
\item[(iv)] $\h O_i$ is a disc with orbifold points. 
\end{itemize}

Now our aim is to choose mutually non-intersecting  non-self-intersecting paths we cut the orbifold along in such a way that $\gamma$ intersects each of them even number of times. This will guarantee that $\gamma$ lifts well in the covering. We need to choose such paths to be incident to every orbifold point exactly once, with all other ends being at the boundary marked points. Application of (1) -- (4) will then provide a required covering.

First, we connect pairs of orbifold points in the same connected components and make cuts along these paths. This results in at most one orbifold point in every connected component. Next, in all components of type (ii) with one orbifold point, connect this point to a boundary marked point, and make cuts along these paths. 
In all components of type (iii) with one orbifold point we choose a closed non-separating loop and cut the orbifold along all these loops. Note that all of these cuts do not change connectedness of the connected components of $\h\O\setminus\gamma$, so this can be done in any order. 

We are left to consider discs with one orbifold point each (we do not count holes obtained from the cuts already made). First, we make the following observation: if we take three orbifold points, then we can choose two of them that can be joined by a path intersecting $\gamma$ at an even number of points. Thus, if there are more than two discs with one orbifold point each, we can join some pair of orbifold points by a path intersecting $\gamma$ an even number of times, make a cut along this path, and then repeat this procedure until at most two orbifold points are left. Denote these points by $A$ and $B$. Clearly, $A$ and $B$ cannot be connected by a path  intersecting $\gamma$ at an even number of points.

Consider any boundary marked point $p$.  Then exactly one of  $A$ and $B$ (say, $B$) can be connected to $p$ by a path  intersecting $\gamma$ at an even number of points. Connect it to $p$ by such a path and cut along it. We are left with one orbifold point $A$ only. 

Now we take two copies of obtained orbifold and glue them along the cuts. We obtain a partial unfolding $\h S$ of degree two in which $\gamma$ lifts well. Assume that $\h S$ is not a surface, otherwise the lemma is already proved. 

Denote by $\bar\gamma$ the complete lift of  $\gamma$. Then any connected component $\h S_i$ of  $\h S\setminus\bar\gamma$ containing orbifold points is of one of the two types: either $\h S_i$ contains exactly two orbifold points (these come from connected components of type (iii) with odd number of orbifold points), or $\h S_i$ is a disc containing one of the lifts of point $A$ (denote these two orbifold points by $A_1$ and $A_2$). The former ones can be treated as before by cutting along a path between the two points. Let us prove the following statement: 
\medskip

\paragraph{{\bf Claim.}}{\em Assume that $\h S$ does not coincide with $\h\O$. Then there exists a path $\bar\alpha$ connecting  $A_1$ to $A_2$ and intersecting each of the two lifts of $\gamma$ at an even number of points.}
\medskip

To prove the claim, we look for a closed non-contractible path $\alpha\subset\h\O$ through $A$ satisfying the two conditions:
\begin{itemize}
\item[-] $\alpha$ intersects $\gamma$  in an even number of points;
\item[-] $\alpha$ intersects the cuts made on $\h\O$ in order to construct $\h S$ an odd number of times. 
\end{itemize}
Then any lift of $\alpha$ will connect $A_1$ to $A_2$ and satisfy the assumptions of the claim.

Since $\h S$ does not coincide with $\h\O$, at least one cut was made at the first step. If there was a cut connecting some orbifold point $A'$ with another orbifold point or a boundary point, then we can take as $\alpha$  a loop around $A'$ based at $A$. If the only cuts were closed loops in connected components of type (iii), then we can choose one such component $\h\O_i$ and take any closed loop in $\h\O_i$ intersecting just one of the cuts exactly once, and then connect this loop to $A$ obtaining a loop  based at $A$.

Thus, the claim is proved. Note that we may assume $\bar\alpha$ does not intersect other cuts we have made on $\h S$. Cutting $\h S$ along $\bar\alpha$ and taking two copies of obtained orbifold, we obtain a degree four unfolding $S$ of $\h\O$ in which $\gamma$ lifts well. 

We are left to consider the case when $\h S$ coincides with  $\h\O$. This, in particular, means that $A$ is the only orbifold point of $\h\O$. We consider three cases: either $\h\O$ is a sphere with a unique boundary component, or it is a sphere with several boundary components, or it has positive genus.

In the former case there are no closed curves, so there is nothing to consider. In the latter case we can always find a non-separating closed curve $\beta$ intersecting $\gamma$ an even number of times: take two homologically non-equivalent non-separating simple closed loops, and if each of them has odd number of intersections with $\gamma$, take their sum. Now we cut along this curve, glue two copies and obtain an orbifold with two orbifold points, so by the construction above there is exists an unfolding of degree four or eight in which $\gamma$ lifts well.

Last, assume that $\h\O$ has genus zero and at least two boundary components. Take marked points $p_1$ and $p_2$  on different boundary components. If $A$ can be joined with $p_1$ or $p_2$ by a path intersecting $\gamma$ at an even number of points, then we proceed as before to obtain a degree two unfolding. Otherwise, $p_1$ and $p_2$ can be connected by a path intersecting   $\gamma$ an even number of times. This path is non-separating, so we can construct a degree two covering with two orbifold points, and then an unfolding of degree four or eight. 

\end{proof}

\section{Skein relations on orbifolds}
\label{skein}

\subsection{Skein relations on surfaces}
We first recall the definition of skein relations for curves on surfaces (see e.g.~\cite{MW}).

We define a {\it multicurve} to be a finite collection of curves with finite number of intersections among them. Given a multicurve $C$ containing two curves intersecting at point $p$ (or a self-intersecting curve), we define two new multicurves $C_+$ and $C_-$ by replacing the crossing as in Fig.~\ref{res}
(we call the multicurves  $C_+$ and $C_-$ a {\it resolution of the intersection $p$ in $C$} and write $R_p(C)=C_++C_-$ understanding the right-hand side as a formal sum).

\begin{figure}[!h]
\begin{center}
\psfrag{C}{$C$}
\psfrag{C-}{$C_+$}
\psfrag{C+}{$C_-$}
\epsfig{file=./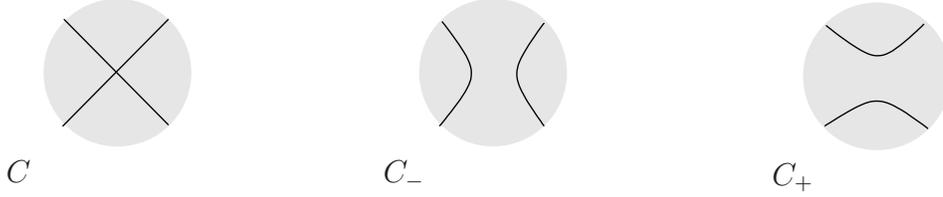,width=0.79\linewidth}
\caption{Resolution of the intersection on a surface: $R_p(C)=C_++C_-$}
\label{res}
\end{center}
\end{figure}

Recall that given an arc $\alpha$, one denotes by $x_{\alpha}$ the cluster variable corresponding to $\alpha$ 
(or its Laurent expansion in a given cluster  $(x_{\gamma_1},\dots,x_{\gamma_n})$). 
For the case of the closed or self-intersecting curve $\alpha$ the Laurent polynomial $x_{\alpha}$ is defined 
in~\cite{MSW,MW} (via explicit formula in terms of a snake graph).

For a multicurve $C=\{\alpha_1,\dots,\alpha_k\}$ one defines an element $x_C$ of the corresponding cluster algebra as a product 
$x_C=x_{\alpha_1}\cdot\dots\cdot x_{\alpha_k}$.

For a finite formal sum of multicurves $\sum k_iC_i$ the Laurent polynomial $x_{\sum k_iC_i}$ is defined by $x_{\sum k_iC_i}=\sum k_i x_{C_i}$.

According to~\cite{MW}, the Laurent expression $x_C$ for $C$ in the cluster $(x_{\gamma_1},\dots,x_{\gamma_n})$ can be expressed via Laurent expressions $x_{C_+}$ and  $x_{C_-}$ for $C_+$ and $C_-$ as follows.


\begin{lemma}[\cite{MW}, Propositions~6.4--6.6]
\label{skein-surface}
Given a multicurve $C$ on an unpunctured surface, the following equality holds: 
$$x_C=Y_+x_{C_+}+Y_-x_{C_-},$$
where $Y_+$ and $Y_-$ are monomials in variables $y_i$. 

\end{lemma}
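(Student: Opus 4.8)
The plan is to prove the identity inside the combinatorial model by which the Laurent polynomials $x_C$, $x_{C_+}$ and $x_{C_-}$ are \emph{defined}, namely the snake and band graph expansions of~\cite{MSW,MW}. The geometric shadow of the statement is a Ptolemy/trace skein identity: when the two strands crossing at $p$ are the diagonals of an embedded ideal quadrilateral with sides $a,b,c,d$, the lambda-length relation $\lambda_{\alpha}\lambda_{\beta}=\lambda_a\lambda_c+\lambda_b\lambda_d$ is exactly the exchange relation, and for closed strands it is the $SL_2$ trace identity; passing to principal coefficients replaces the two summands by $Y_+x_{C_+}$ and $Y_-x_{C_-}$, the monomials $Y_\pm$ being read off from the shear coordinates of the elementary laminations. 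The substance of the proof is to reduce the general multicurve case --- closed curves, self-crossings, and non-embedded configurations --- to one local combinatorial move.

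First I would record the snake/band graph formula: for a single curve $\gamma$ meeting the arcs of the fixed triangulation $T$ in order, $x_\gamma=\mathrm{cross}(\gamma)^{-1}\sum_{P}x(P)\,y(P)$, the sum ranging over perfect matchings $P$ of the graph $G_\gamma$, with $x(P)$ the product of edge weights (initial cluster variables) and $y(P)$ the principal-coefficient height monomial; for a multicurve one multiplies these expansions. Then I would analyse the local picture at $p$: the two strands through $p$ run through a common string of triangles, so the graphs underlying $C$ overlap in a common sub-snake-graph, and the resolutions $C_+,C_-$ are obtained by regluing its four ends in the two complementary ways. This regluing is a local surgery (``grafting'') that may add or delete the tile corresponding to an arc crossed once more or once less, so the denominators $\mathrm{cross}(C),\mathrm{cross}(C_+),\mathrm{cross}(C_-)$ differ by a controlled local factor; after clearing a common denominator the assertion becomes a polynomial identity between matching generating functions.

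The heart of the argument is a weight-preserving bijection between the perfect matchings of the graph attached to $C$ and the disjoint union of those attached to $C_+$ and $C_-$. I would build it from the resmoothing move on the overlap: a matching restricts there to one of two local types, and each type maps bijectively to a matching on the $C_+$-, respectively $C_-$-side, with the edge-weight monomial $x(P)$ preserved up to the single local tile factor. It then remains to compare the height monomials $y(P)$: I would show that along the $C_+$-branch they differ from those of their images by one and the same factor $Y_+$, and similarly $Y_-$ on the other branch, where $Y_\pm$ records the principal-coefficient contribution of the triangles involved in the smoothing --- equivalently the shear coordinates of the elementary laminations along the two resolved strands --- and is therefore a monomial in the $y_i$ independent of $P$. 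Summing over matchings and restoring denominators yields $x_C=Y_+x_{C_+}+Y_-x_{C_-}$.

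I expect the main obstacle to be the cases where $p$ is a self-crossing of a single curve, or where a crossing strand is closed, since these force a passage to band graphs and to \emph{self}-overlaps of a snake graph, in which the overlapping piece can wind around and the regluing producing $C_\pm$ is combinatorially delicate; one must check that the matching correspondence remains a genuine bijection after the cyclic identification (no matchings lost or double counted) and that the uniform coefficient factors $Y_\pm$ survive it. A second, closely related subtlety is verifying that $Y_+$ and $Y_-$ are honest \emph{monomials} rather than polynomials, i.e. that the height shift is constant across each matching class; this is precisely where the definition of the height statistic and its compatibility with the shear-coordinate description of the $y_i$ used throughout the paper must be invoked.
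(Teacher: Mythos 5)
The paper contains no proof of this lemma at all: it is imported verbatim from Musiker--Williams~\cite{MW}, Propositions~6.4--6.6, so the only ``proof'' in the paper is the citation, and the right comparison is with the argument in that source. Your reconstruction is a viable strategy but a genuinely different one. Musiker and Williams do not argue by matching bijections: they first establish matrix formulae expressing $x_\gamma$ (with principal coefficients) as the trace, respectively a matrix entry, of a product of elementary $2\times 2$ matrices attached to the sequence of arcs and triangles crossed by $\gamma$, and then deduce all the skein relations from $SL_2$ identities --- essentially $\mathrm{tr}(AB)+\mathrm{tr}(AB^{-1})=\mathrm{tr}(A)\,\mathrm{tr}(B)$ and its degenerations --- with the monomials $Y_\pm$ read off once and for all from the $y$-entries of the matrix product. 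This sidesteps precisely the two difficulties you flag: a self-crossing of a closed curve reduces to $\mathrm{tr}(M^2)=\mathrm{tr}(M)^2-2$ (whence the $-2$ for contractible components), and the fact that $Y_\pm$ are honest monomials, constant across all terms, is automatic because they arise from a single matrix computation rather than a matching-by-matching height comparison. Your plan --- a weight-preserving bijection between perfect matchings of the snake/band graphs of $C$ and of $C_+\sqcup C_-$ obtained by regluing an overlap --- is essentially the ``snake graph calculus'' of Canakci and Schiffler, which does succeed, but the self-overlap and band-graph cases you correctly identify as delicate are exactly where the bulk of that theory lives; as written, your proposal names the needed bijection in those cases without constructing it, whereas the matrix route makes them routine. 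In short: your approach is correct in outline and known to work, but it is not the proof behind the citation, and it buys positivity-flavoured combinatorial information at the cost of substantially harder case analysis, while the trace-identity proof of~\cite{MW} is shorter and handles coefficients and self-intersections uniformly.
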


\begin{remark}
For the case of a punctured surface similar relations are obtained in~\cite[Propositions~6.12--6.14]{MW}
under the condition that the initial triangulation contains no self-folded triangles. 

\end{remark}

The powers of $y_i$ in the monomials $Y_+$ and $Y_-$ can be expressed via intersection numbers of the multicurves $C,C_+,C_-$ with elementary laminations $L_i$ for the initial triangulation $({\gamma_1},\dots,{\gamma_n})$ (see~\cite{MW} for details). If one of the connected components in a multicurve $C_+$ and $C_-$ occurs to be a contractible loop, this component is substituted by a multiple $-2$.



In the coefficient-free case the skein relations from Lemma~\ref{skein-surface} simplify to  
$$x_C=x_{C_+}+x_{C_-}.$$

\subsection{Skein relations on orbifolds}

We want to show counterparts of formulae from~\cite[Propositions~6.4--6.6]{MW} (and, in particular, of Lemma~\ref{skein-surface}) for orbifolds. 
For this, we need to redefine the intersection numbers, the multicurves $C_+$ and $C_-$, and the Laurent polynomials associated to curves.

One of the ways to obtain skein relations for orbifolds is the following. Consider an unfolding $S$ of the orbifold $\h\O$.
If a multicurve $C$ on  $\h\O$ has an intersection in a point $p\in\h\O$, we can consider the lifts of the curves in $C$ through $p$ to the unfolding $S$, resolve the intersection on the surface  by applying the skein relation provided in~\cite{MW}, and then look at the image of the obtained multicurves on $\h\O$. This approach (in the coefficient-free case) leads to the result in Table~\ref{skein-table}. The verifications are straightforward in the assumption that each component of the multicurve $C$ lifts well in $S$. The proof of the general case requires some preparation.

\subsection{Resolutions of curves on orbifolds}

\begin{definition}
\label{def skein}
Let $C$ be a multicurve. Suppose that $p$ is an intersection point of $C$ (an intersection of some 
curves $\gamma,\gamma'\in C$ or a self-intersection of some curve $\gamma\in C$).
The {\it resolution of $C$ in $p$} will be denoted by $R_p(C)$ and defined as shown in Table~\ref{skein-table}. 

More precisely, in most cases
 $$R_p(C)=C_++C_-+2C_=,$$ where $C_+$, $C_-$ and $C_=$ are as in Table~\ref{skein-table} (we understand the right-hand side as a formal sum). 
When $C_=$ is absent in  Table~\ref{skein-table}, we define  $R_p(C)=C_++C_-$.

\begin{remark}
Note that our definition of resolution differs a bit from one used, e.g., in~\cite{T}: we consider the whole sum but not a single summand.
\end{remark}

\begin{center}
\begin{table}[!h]
\caption{Resolution of curves on orbifolds}
\label{skein-table}
\begin{tabular}{|c||ccc|c|}
\hline
&&&&\\
$C$&$C_+$&$C_-$&$C_=$&resolution $R_{p}(C)$\\
&&&&\\
\hline
\hline
&&&&\\
\epsfig{file=./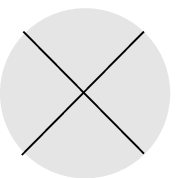,width=0.07\linewidth}&\epsfig{file=./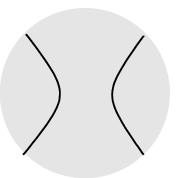,width=0.07\linewidth}&
\epsfig{file=./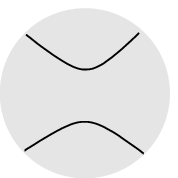,width=0.07\linewidth}& \raisebox{13pt}{absent}&\raisebox{13pt}{$C_++C_-$}\\
&&&&\\
\hline
&&&&\\
\phantom{aa} \epsfig{file=./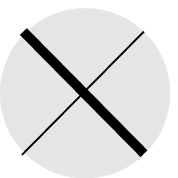,width=0.07\linewidth}\phantom{aa} &\phantom{aa} \epsfig{file=./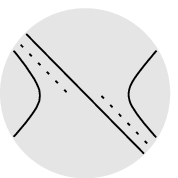,width=0.07\linewidth}\phantom{aa} &\phantom{aa} 
\epsfig{file=./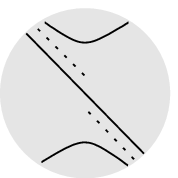,width=0.07\linewidth}\phantom{aa} & \raisebox{13pt}{absent}&\raisebox{13pt}{$C_++C_-$} \\
&&&&\\
\hline
&&&&\\
\phantom{aa} \epsfig{file=./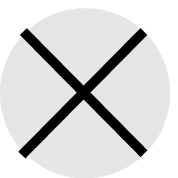,width=0.07\linewidth}\phantom{aa}  &\phantom{aa}  \epsfig{file=./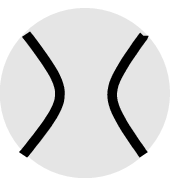,width=0.07\linewidth}\phantom{aa}  &\phantom{aa} 
\epsfig{file=./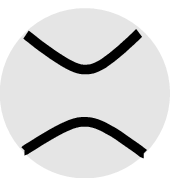,width=0.07\linewidth}\phantom{aa}  &\phantom{aa} \epsfig{file=./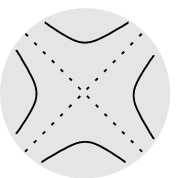,width=0.07\linewidth}\phantom{aa}&\raisebox{13pt}{$C_++C_-+2C_=$} \\
&&&&\\
\hline
&&&&\\
\psfrag{g1}{\scriptsize $\gamma_1$}
\psfrag{g2}{\scriptsize $\gamma_2$}
\phantom{aa} \raisebox{-4mm}{\epsfig{file=./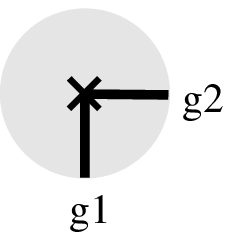,width=0.098\linewidth}} &\phantom{aa} \epsfig{file=./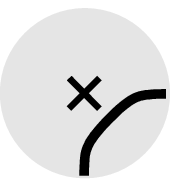,width=0.07\linewidth}\phantom{aa} &\phantom{aa} 
\epsfig{file=./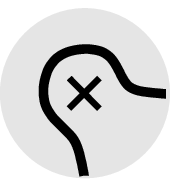,width=0.07\linewidth}\phantom{aa} &  \raisebox{13pt}{absent}
&
\raisebox{13pt}{
\begin{tabular}{ll}
$C_++C_-$ & \quad if $\gamma_1\ne \gamma_2 \vphantom{\int\limits_A}$\\
$C_++C_--2$ & \quad if $\gamma_1=\gamma_2$\\ 
\end{tabular}
}
\\
&&&&\\
\hline
\end{tabular}
\end{table}
\end{center}

The entries of Table~\ref{skein-table} should be interpreted in the following way:

\begin{itemize}
\item
If a thick curve $\gamma$ in $C_+$ or $C_-$ is not incident to any orbifold point, then it is interpreted as two thin curves isotopic
to $\gamma$ (see Example~\ref{skein-ex}).
\item 
The multicurve $C_=$ in the third row builds as follows:
\begin{itemize}
\item[-] for each of the four directions (shown by dashed rays) two thin curves follow this direction;
\item[-] at the end of each dashed ray one has either a marked point or an orbifold point (since a thick curve is never closed);
\item[-] for a marked endpoint, both thin curves meet at this point;
\item[-] for an orbifold endpoint, the two curves coming to this point make one (thin) curve going around the orbifold point (see Example~\ref{skein-ex});
\end{itemize}
\item 
The multicurves $C_+$ and $C_-$ in the second row build similarly to $C_=$:
\begin{itemize}
\item[-] each of two directions shown by a dashed ray is followed by two thin curves, the curves either meet at a marked
endpoint or form one curve travelling around an orbifold endpoint.
\end{itemize}
\item 
Each contractible closed loop $\gamma$ in a multi-curve $C_\pm$ is substituted by a multiple $-2$
(so that if $C=\{\gamma_1,\dots,\gamma_k\}$ and $\gamma_1,\dots,\gamma_m$ are contractible closed loops,
then $C=(-2)^mC_1$, where $C_1=\{\gamma_{m+1},\dots,\gamma_k\}$).
\item 
Each semi-closed loop with coinciding endpoints cutting out a disc without orbifold points is substituted by a multiple $2$.
\item 
Each arc with coinciding endpoints cutting out a disc without orbifold points is substituted by a multiple $0$.
\item 
Each closed loop cutting out a disc with a unique orbifold point is substituted by a multiple $0$.

\end{itemize}

\end{definition}

\begin{example}
\label{skein-ex}
Here are three examples of resolutions: 
$$
R_p(\raisebox{-12pt}{\epsfig{file=./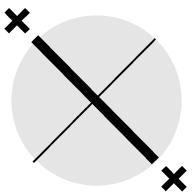,width=0.07\linewidth}})\  = \
\raisebox{-12pt}{\epsfig{file=./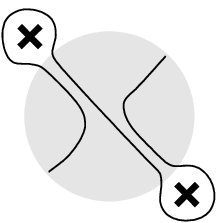,width=0.073\linewidth}} \ + \
\raisebox{-12pt}{\epsfig{file=./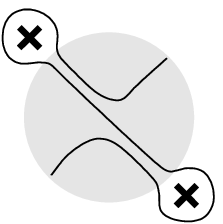,width=0.073\linewidth}}
\vphantom{++2\raisebox{-12pt}{\epsfig{file=./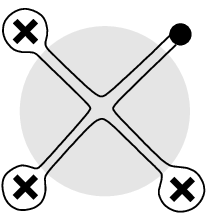,width=0.073\linewidth}} }
$$
\smallskip
$$
R_p(\raisebox{-12pt}{\epsfig{file=./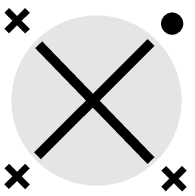,width=0.07\linewidth}})\  = \
\raisebox{-12pt}{\epsfig{file=./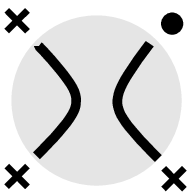,width=0.07\linewidth}} \ + \
\raisebox{-12pt}{\epsfig{file=./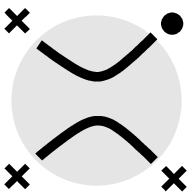,width=0.07\linewidth}} \ + \
2\raisebox{-12pt}{\epsfig{file=./pic/r28.eps,width=0.073\linewidth}} 
$$
\medskip
$$
R_p(\raisebox{-12pt}{\epsfig{file=./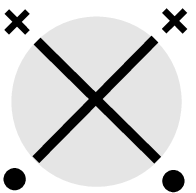,width=0.07\linewidth}})\  = \
\raisebox{-12pt}{\epsfig{file=./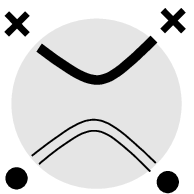,width=0.07\linewidth}} \ + \
\raisebox{-12pt}{\epsfig{file=./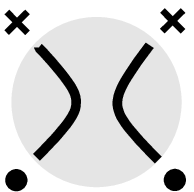,width=0.07\linewidth}} \ + \
2\raisebox{-12pt}{\epsfig{file=./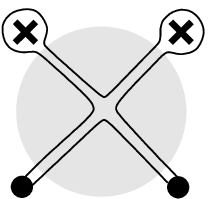,width=0.073\linewidth}} 
$$
\medskip
\end{example}

\begin{remark}
\label{factor of res}
The reason to define the multicurves $C_+,C_-,C_=$ as in Table~\ref{skein-table} is the following. Let $C$ be a multi-curve in $\h\O$, $p$ is an intersection point of $C$. Denote by $\h C$ the complete collection of curves of $C$ containing $p$. Let $S$ be an unfolding of $\h\O$ such that all components of $\h C$ lift well. Let $\bar p\in S$ be any lift of $p$, and let $\bar C$ be the complete collection of lifts of curves of $\h C$ containing $\bar p$.

Then the image of the resolution $R_{\bar p}(\bar C)$ on $S$ under the covering map $S\to\h\O$ coincides with the resolution  $R_{p}(\h C)$ on $\h\O$.
\end{remark}

\begin{remark}
\label{skein-th}
There is another way to describe skein relations on orbifolds (suggested by Dylan Thurston). As before, assume that $\gamma$ and $\gamma'$ are the curves in a multicurve $C$ intersecting at a point $p$. Then one can proceed in the following way.

- If $\gamma$ is a thin curve, leave it intact. 

- If $\gamma$ is a pending curve, substitute it by an ordinary curve with two ends in the marked end  of $\gamma$ going around the orbifold end of $\gamma$ (see Fig.~\ref{deform}(a)). 

- If $\gamma$ is a semi-closed curve, substitute it by an ordinary closed curve going around $\gamma$ (see  Fig.~\ref{deform}(b)).

- Do the same for $\gamma'$. 

- Apply usual skein relations for all crossings of the images of $\gamma$ and $\gamma'$ (as we have thin curves only).

- Substitute curves with marked end around an orbifold point by pending curves, closed curves around two orbifold points by semi-closed curves, contractible closed curves around an orbifold point by $0$.

It is easy to see that the result of the procedure above is exactly the same as one described in Table~\ref{skein-table}. This can be explained as follows: if we consider the curves as geodesics on $\h\O$, every substitution above is a small deformation of the geodesic. 

\end{remark}

\begin{figure}[!h]
\begin{center}
\psfrag{a}{\small (a)}
\psfrag{b}{\small (b)}
\epsfig{file=./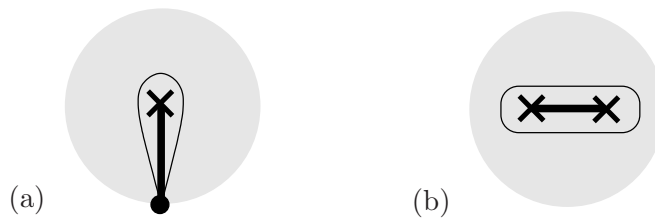,width=0.55\linewidth}
\caption{Deformations of thick curves into thin ones}
\label{deform}
\end{center}
\end{figure}

\begin{lemma}
\label{AB=BA}
The result of resolutions of several intersection points of a multicurve $C$ does not depend on the order of resolutions. In particular, if $p$ and $q$ are intersection points of $C$ then
$R_pR_q(C)=R_qR_p(C)$.

\end{lemma}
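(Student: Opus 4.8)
The plan is to exploit the fact that resolution is a \emph{local} operation, carried out in an arbitrarily small neighborhood of the intersection point, together with the linearity of resolution over formal sums of multicurves.

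First I would fix disjoint disc neighborhoods $D_p\ni p$ and $D_q\ni q$ with $D_p\cap D_q=\emptyset$; these exist precisely because $p\ne q$. Inspecting Table~\ref{skein-table}, the resolution $R_p(C)$ is obtained by replacing $C\cap D_p$ with one of the prescribed local models and leaving $C$ unchanged outside $D_p$ (up to an isotopy supported in $D_p$), and likewise for $R_q$. Extending resolution linearly by $R_q(\sum_i k_iC_i)=\sum_i k_i R_q(C_i)$, it suffices to compare the two iterated expansions term by term. Since $R_p$ modifies nothing inside $D_q$ and $R_q$ modifies nothing inside $D_p$, both orders produce the \emph{same} formal sum of raw (pre-cleanup) multicurves: there is a canonical bijection between the terms of $R_pR_q(C)$ and of $R_qR_p(C)$ matching the local choice at $p$ (which of $C_+,C_-,C_=$, with its weight $1,1$ or $2$) against the independent local choice at $q$, and the total coefficient of each term is the product of the two local weights, which is the same in either order.

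It then remains to treat the global cleanup rules of Definition~\ref{def skein}: contractible closed loops are replaced by $-2$, semi-closed loops and arcs cutting out empty discs by $2$ and $0$, and closed loops around a single orbifold point by $0$. Each such substitution depends only on the isotopy class of the component in question, and the isotopy classes of the components of a resolved multicurve are determined by the raw multicurve, which we have just seen is independent of the order of resolution. Hence the cleanup produces identical scalars in both orders, giving $R_pR_q(C)=R_qR_p(C)$.

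The main subtlety occurs when $p$ and $q$ lie on a single curve (or on the same pair of curves), and especially when thick (pending or semi-closed) curves are involved, so that the $C_=$ term and the ``travel around an orbifold point'' bookkeeping of Table~\ref{skein-table} enter; here resolving at $p$ may alter the global topology of a curve passing through $q$. To confirm that $q$ nonetheless survives as an intersection of the same local type and that the formation of $C_=$ is genuinely local, I would pass to an unfolding $S$ in which every closed component of $C$ through $p$ or $q$ lifts well (such $S$ exists by Lemma~\ref{lifts well}). On the surface $S$ all curves are thin, the distinct crossings $\bar p$ and $\bar q$ admit disjoint neighborhoods, and the surface skein relations of~\cite{MW} therefore commute there by the local argument above; projecting back via Remark~\ref{factor of res} transports this commutativity to $\h\O$ and vindicates the local analysis in the thick-curve cases.
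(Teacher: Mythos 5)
Your first two paragraphs are in essence the paper's own proof: the paper disposes of this lemma in one line by observing that, in the formulation of Remark~\ref{skein-th}, a resolution of an intersection point is a local procedure, so either order of resolving leads to the same complete resolution of both points. Your elaboration of the term-by-term bijection and of the cleanup rules of Definition~\ref{def skein} is a sound expansion of that observation.

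The problem is in your third paragraph. You correctly identify the delicate point --- when thick curves are involved, which row of Table~\ref{skein-table} applies at $q$, and whether a $C_=$ term appears, depends on the global types of the strands through $q$, which resolving at $p$ can change --- but your fix overstates what Lemma~\ref{lifts well} provides. That lemma produces, for a \emph{single} closed curve $\gamma$, an unfolding in which $\gamma$ lifts well; it does not produce one unfolding in which several closed curves lift well simultaneously, and no such simultaneous statement is proved anywhere in the paper. The paper is in fact careful about exactly this: Lemma~\ref{a} has to \emph{assume} the existence of an unfolding in which both curves lift well, and the separate Lemma~\ref{c} (for a crossing of two closed curves) exists precisely because that assumption cannot be guaranteed in general. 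So the step ``pass to an unfolding $S$ in which every closed component of $C$ through $p$ or $q$ lifts well (such $S$ exists by Lemma~\ref{lifts well})'' is unjustified. The repair is already in the paper and needs no covering space: by Remark~\ref{skein-th} one may deform all thick curves into thin ones (a small deformation of geodesics), compute every resolution as an ordinary skein resolution in this uniform all-thin picture, and substitute back at the end; in that picture resolutions at distinct points are supported in disjoint discs and commute trivially, and this covers all rows of Table~\ref{skein-table} at once, including the $C_=$ bookkeeping that worried you.
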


\begin{proof}
Each of $R_pR_q(C)$ and $R_qR_p(C)$ leads to a complete resolution of both intersection points
(note that in terms of the Remark~\ref{skein-th} a resolution of an intersection point is a local procedure).

\end{proof}

\begin{lemma}
For any multicurve $C$ there exists a sequence of resolutions turning $C$ into a sum of multicurves containing no intersections.

\end{lemma}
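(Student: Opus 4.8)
The plan is to induct on a complexity measure that counts intersections and to show that every single resolution strictly decreases it. Fix a generic representative of $C$ and let $N(C)$ be the total number of intersection points (crossings between distinct curves and self-crossings), which is finite by assumption. Since $N$ takes values in $\Z_{\ge 0}$, it suffices to prove that for any intersection point $p$ each multicurve occurring in the formal sum $R_p(C)$ has strictly smaller complexity than $C$; the lemma then follows by induction on $N(C)$, the base case $N(C)=0$ being vacuous, applying $R_q$ to each summand in turn (the order being irrelevant by Lemma~\ref{AB=BA}).

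First I would dispose of the substitution rules listed after Definition~\ref{def skein}. Replacing a contractible closed loop by the scalar $-2$, a trivial semi-closed loop by $2$, a trivial arc by $0$, or a small loop around a single orbifold point by $0$ each deletes a curve from the multicurve and hence can only decrease $N$. Thus these rules never obstruct the induction, and I may assume no summand is discarded by them.

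The core of the argument is local monotonicity of smoothing. For the rows of Table~\ref{skein-table} producing $C_+ + C_-$ with only thin strands, the resolution is the classical smoothing of a transversal crossing: away from a small disc about $p$ the curves are unchanged, and inside that disc the two reconnections introduce no crossing. Hence every other intersection point of $C$ survives unchanged in $C_\pm$ while the crossing at $p$ disappears, giving $N(C_\pm) \le N(C)-1$, and passing to a minimal (geodesic) representative can only lower the count further. To reduce the thick cases to this thin situation I would invoke the deformation description of Remark~\ref{skein-th}: replace each pending curve by an ordinary curve making a small detour around its orbifold endpoint and each semi-closed curve by a nearby closed curve, so that $C$ becomes a system $\tilde C$ of thin curves with the same crossings away from the orbifold points. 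By Remark~\ref{skein-th} the resolution of Table~\ref{skein-table} is exactly the ordinary smoothing of $\tilde C$ at the deformed copies of $p$, after which one reads the orbifold curves back off. Since each thick curve becomes a \emph{single} thin strand, the apparent doubling in $C_=$ is only the way one thin curve looks as it runs in and back out around an orbifold point, and no genuine increase of intersections occurs.

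The step I expect to be the main obstacle is precisely the third row, where $R_p(C)=C_++C_-+2C_=$ and the naive count of intersections of $C_=$ can look larger than that of $C$ because of the doubling around orbifold endpoints. The resolution is to measure complexity on the deformed thin system $\tilde C$ rather than on $C$ itself: there the thick--thick crossing at $p$ corresponds to a cluster of thin crossings, the three terms $C_+,C_-,C_=$ arise from smoothing all of them, and each resulting thin configuration has strictly fewer crossings than $\tilde C$. Alternatively, one may pass to an unfolding $S$ in which all components of the sub-multicurve through $p$ lift well (Lemma~\ref{lifts well}), resolve the lifted crossing on the genuine surface $S$ where the classical surface argument applies, and descend via the covering using Remark~\ref{factor of res}; the monotonicity upstairs then forces termination downstairs. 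Either route shows that $N$ strictly decreases for every summand of $R_p(C)$, completing the induction.
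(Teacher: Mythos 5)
Your proposal is correct and is essentially the paper's own argument: the paper inducts on a weighted intersection count (thick--thin crossings weighted $2$, thick--thick crossings at regular points weighted $4$, thick--thick crossings at orbifold points weighted $2$), and these weights are exactly the crossing numbers of your deformed thin system from Remark~\ref{skein-th}, so your complexity measure coincides with the paper's and the induction proceeds identically. The only difference is presentational: you derive the measure from the deformation picture, while the paper posits the weights directly.
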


\begin{proof}
Let us count the total number of intersection points in $C$, counting an intersection of a thick curve with a thin with 
multiplicity $2$, an intersection of two thick curves in a regular point with multiplicity $4$, and an intersection of two thick curves in an orbifold point with multiplicity $2$. Then each resolution reduces the total number of intersections. 

\end{proof}

For a finite union of intersection points $P=\{p_1,\dots,p_k\}$ of $C$ we denote by $R_P(C)$ the resolution of all points of $P$.

\subsection{Laurent polynomials corresponding to curves on orbifolds}

In this section we define Laurent polynomial $x_\gamma$ (in a given arbitrary cluster) for each  curve $\gamma$ on orbifold.
Our definition comes from comparing the curve with its lift in some unfolding. To see the independence 
from the choice of unfolding we will use an interpretation of $x_\gamma$ (for a non-self-intersecting curve $\gamma$) as a lambda length of $\gamma$. 

\subsubsection{Lambda lengths of closed curves on surfaces}

Let us fix a triangulation $T$ on a surface $S$, and assign to the cluster variables $\{x_\alpha\}_{\alpha\in T}$ the values equal to the lambda lengths of the corresponding arcs. According to~\cite{FT}, given an arc $\gamma$ on $S$, the value of the cluster variable $x_\gamma$ also equals the lambda length of $\gamma$ (i.e. $x_\gamma$ represents lambda length of $\gamma$ as a function of lambda lengths of the arcs in the initial triangulation).

\begin{definition}
\label{lambdacl}
The  {\it lambda length} $\lambda_\gamma$ of a closed curve $\gamma$ is defined as
$\lambda_\gamma=e^{l(\gamma)/2}$, where $l(\gamma)$ is the hyperbolic length of the geodesic representative in the class of curves freely isotopic to $\gamma$.
\end{definition}

The Laurent polynomials associated to closed curves on surfaces are defined in~\cite[Definition~3.12]{MSW2} using band graphs. As proved in~\cite{MSW2}, these Laurent polynomials satisfy the skein relations. It is a widely known (but probably not written anywhere) fact that these Laurent polynomials are also 
equal to lambda lengths of closed curves. 

In this section we prove this fact for non-self-intersecting curves (i.e., for closed loops).

The following lemma is an easy exercise in hyperbolic geometry.

\begin{lemma}
\label{annulus}
Let $S$ be an annulus with one boundary marked point at each boundary component.
Let $\gamma$ be the closed loop on $S$. Then $x_\gamma$ represents the lambda length of $\gamma$.

\end{lemma}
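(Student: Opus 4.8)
The plan is to verify the identity by an explicit computation in the hyperbolic plane, comparing the two sides directly. First I would fix the triangulation $T=\{\tau_1,\tau_2\}$ of the annulus $S$ (two arcs joining the marked point on one boundary component to the marked point on the other) and realize $S$ as $\H/\langle g\rangle$, where $g\in SL_2(\R)$ is the hyperbolic generator of the deck group whose axis is the lift of the core geodesic $\gamma$. With this normalization the translation length of $g$ equals $l(\gamma)$, so $\lambda_\gamma=e^{l(\gamma)/2}$ and $|\mr{tr}\,g|=\lambda_\gamma+\lambda_\gamma^{-1}$; thus recovering $\lambda_\gamma$ is equivalent to computing $\mr{tr}\,g$.

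Next I would express $g$ as a product of standard $SL_2(\R)$ matrices read off from the triangulation. Lifting $T$ to $\H$, the core loop crosses $\tau_1$ and $\tau_2$ once each and passes through the two ideal triangles of $T$; writing the holonomy around $\gamma$ as the corresponding product of edge matrices (built from the lambda lengths $x_{\tau_1},x_{\tau_2}$ and the chosen horocycles) and triangle matrices, and using that $x_{\tau_i}$ equals the lambda length of $\tau_i$ (known for arcs, \cite{FT}), I would obtain $\mr{tr}\,g$ as an explicit Laurent expression in $x_{\tau_1},x_{\tau_2}$ and hence solve for $\lambda_\gamma$.

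Finally I would compute the band-graph Laurent polynomial $x_\gamma$ of the core loop from \cite[Definition~3.12]{MSW2}: since $\gamma$ crosses exactly the two arcs of $T$, the associated band graph has only two tiles and its perfect matchings can be enumerated by hand, yielding a short closed form for $x_\gamma$ in the cluster $(x_{\tau_1},x_{\tau_2})$. Comparing this with the geometric value of $\lambda_\gamma$ found above completes the proof; since both quantities are coordinate-independent under flips, it suffices to check the match for this single triangulation.

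The main obstacle is bookkeeping rather than conceptual: one must pin down all conventions consistently --- the choice of horocycles at the two cusps, the normalization of the edge and triangle matrices, and the sign of the trace --- so that the monodromy product reproduces $\lambda_\gamma=e^{l(\gamma)/2}$ on the nose rather than up to a scalar or an extra $\lambda_\gamma^{-1}$ term. An alternative that sidesteps part of this is to observe that lambda lengths satisfy the $SL_2$ trace identity $\mr{tr}(AB)=\mr{tr}(A)\,\mr{tr}(B)-\mr{tr}(AB^{-1})$, which is precisely the skein relation of Lemma~\ref{skein-surface}; since the band-graph polynomials obey the same relation and agree with lambda lengths on arcs, a single resolution of $\gamma$ against a crossing arc forces $x_\gamma=\lambda_\gamma$.
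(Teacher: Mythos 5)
The paper offers no argument for this lemma at all --- it is introduced with the sentence ``The following lemma is an easy exercise in hyperbolic geometry'' --- so your attempt must stand on its own. Your overall strategy (compare the band-graph polynomial of the core loop with the holonomy of the deck transformation) is indeed the natural one, and your alternative skein-based argument is the cleanest version of it: resolving the single crossing of $\gamma$ with $\tau_1$ via Lemma~\ref{skein-surface} and using that cluster variables of arcs are lambda lengths gives
$$x_\gamma=\frac{x_{\tau_2}+x_{\tau_0}}{x_{\tau_1}}=\frac{x_{\tau_1}^2+x_{\tau_2}^2+1}{x_{\tau_1}x_{\tau_2}},$$
where $\tau_0$ is the flip of $\tau_2$, and this agrees with the two-tile band graph computation.

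However, the issue you dismiss as ``bookkeeping'' is exactly where the argument breaks, and no choice of conventions repairs it. Carrying out the hyperbolic exercise: realize the deck group as $z\mapsto e^{\ell}z$ with $\ell=l(\gamma)$, lift the two cusps to the boundary points $e^{n\ell}$ and $-e^{n\ell}y_0$ with $g$-equivariant horocycles, and apply the formula $\lambda=|a-b|/\sqrt{h_ah_b}$; one finds $\lambda_{\tau_2}+\lambda_{\tau_0}=(e^{\ell/2}+e^{-\ell/2})\,\lambda_{\tau_1}$. Hence the skein relation forces $x_\gamma=2\cosh(\ell/2)=\mathrm{tr}\,g=\lambda_\gamma+\lambda_\gamma^{-1}$, \emph{not} $\lambda_\gamma=e^{\ell/2}$. (Concrete check: $\ell=2\log 2$, $y_0=1$ yields $x_{\tau_1}=4/3$, $x_{\tau_2}=5/3$, so $x_\gamma=5/2=2+1/2$, whereas $e^{\ell/2}=2$.) The extra $\lambda_\gamma^{-1}$ is not a normalization artifact: the band polynomial is a trace, as is also visible from the paper's own bracelet relation $x_{Brac_k(\gamma)}=T_k(x_\gamma)$ with $T_0=2$, which traces satisfy and pure exponentials do not; nor can you ``solve for $\lambda_\gamma$'' from $\mathrm{tr}\,g$ inside the cluster algebra, since that requires a square root. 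So your proof, executed honestly, establishes $x_\gamma=\lambda_\gamma+\lambda_\gamma^{-1}$; it proves the lemma only if one reads Definition~\ref{lambdacl} with the trace convention $\lambda_\gamma:=2\cosh(l(\gamma)/2)$ rather than $e^{l(\gamma)/2}$ as literally written. (This discrepancy is inherited from the statement itself; the rest of the paper is insensitive to it, since all later uses only require $x_\gamma$ to be a fixed, triangulation-independent function of the hyperbolic structure --- but your final step ``comparing \dots completes the proof'' cannot be completed as claimed.)
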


\begin{lemma}
\label{length on surf}
Let $\gamma$ be a closed loop on an unpunctured surface $S$. Then  $x_\gamma$ represents the lambda length of $\gamma$.


\end{lemma}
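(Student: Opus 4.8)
Let $\gamma$ be a closed loop on an unpunctured surface $S$. Then $x_\gamma$ represents the lambda length of $\gamma$.

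The plan is to prove the equality $x_\gamma=\lambda_\gamma$ by induction on the geometric complexity of $\gamma$, using the skein relations to descend to the base case already handled in Lemma~\ref{annulus}. The key observation is that both quantities are functions of the lambda lengths of the arcs of a fixed triangulation $T$: the Laurent polynomial $x_\gamma$ by construction (it is defined via band graphs in~\cite{MSW2}), and the hyperbolic length $\lambda_\gamma=e^{l(\gamma)/2}$ because the hyperbolic structure on $S$ is determined by the lambda lengths $\lambda_{\gamma_1},\dots,\lambda_{\gamma_n}$ of the arcs of $T$. Thus it suffices to show that these two functions of $(\lambda_{\gamma_1},\dots,\lambda_{\gamma_n})$ coincide, and for this I would compare how each behaves under resolution of a single crossing.

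First I would fix an auxiliary arc $\alpha$ of $S$ meeting $\gamma$ and form the multicurve $C=\{\gamma,\alpha\}$. On the cluster-algebra side, Lemma~\ref{skein-surface} gives $x_\gamma\,x_\alpha=x_C=Y_+x_{C_+}+Y_-x_{C_-}$, where the resolved multicurves $C_+,C_-$ have strictly fewer crossings among their components. On the hyperbolic side I would establish the matching identity $\lambda_\gamma\,\lambda_\alpha=Y_+\lambda_{C_+}+Y_-\lambda_{C_-}$ directly from hyperbolic trigonometry, i.e.\ the trace identity for the $SL_2(\R)$ holonomies of $\gamma$ and $\alpha$, with the contractible-loop convention $\lambda=-2$ built in exactly as in the skein rules. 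Since $x_\alpha=\lambda_\alpha$ for the arc $\alpha$ by~\cite{FT}, and since every component appearing in $C_+$ and $C_-$ is either an arc or a closed loop of smaller complexity, the induction hypothesis (together with~\cite{FT} for arcs) yields $x_{C_\pm}=\lambda_{C_\pm}$. Dividing the two matching relations by the nonzero factor $x_\alpha=\lambda_\alpha$ then gives $x_\gamma=\lambda_\gamma$.

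For the base of the induction I would reduce to Lemma~\ref{annulus}: when $\gamma$ meets $\alpha$ in a single point and $\gamma$ is non-separating, the two resolutions $C_+,C_-$ are honest arcs $\delta_+,\delta_-$ wrapping around $\gamma$ in the two possible ways, and the whole configuration lives in an annular neighbourhood of $\gamma$, so the required hyperbolic Ptolemy relation is precisely the computation of Lemma~\ref{annulus}. The separating case I would handle by choosing $\alpha$ to cross $\gamma$ an even (minimal) number of times and resolving one crossing at a time, each step strictly decreasing the intersection number $i(\gamma,\alpha)$ and thereby feeding the induction.

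I expect the main obstacle to be the hyperbolic-geometry input: verifying that $\lambda_\gamma$ satisfies the skein relations with precisely the same monomial coefficients $Y_\pm$ and the same contractible-loop normalization as the band-graph polynomials of~\cite{MSW2}. Reconciling the normalization $\lambda_\gamma=e^{l(\gamma)/2}$ with the trace identity, which naturally produces $2\cosh(l(\gamma)/2)$, and with the $-2$ convention for contractible loops, is the delicate point; once this single-crossing identity is pinned down, the reduction to Lemma~\ref{annulus} and the induction are routine.
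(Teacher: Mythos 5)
Your proposal rests entirely on an input you never establish: that lambda lengths satisfy the same skein relations as the band-graph Laurent polynomials, with the same normalization and the same $-2$ convention for contractible loops. That identity is not a routine verification to be deferred --- it is essentially the content of the lemma. The $SL_2(\R)$ trace identity you invoke applies to traces of closed holonomies, i.e.\ to $2\cosh(l/2)$, while the paper's lambda length of a closed curve is $e^{l(\gamma)/2}$; and for the mixed case of an arc crossing a closed loop the arc is not a trace at all, so one needs the decorated (Penner/light-cone) formalism to even state the identity. You correctly flag this reconciliation as ``the delicate point,'' but a proof whose delicate point is exactly the assertion being proved has a genuine gap. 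Your base case is also flawed as stated: if an arc $\alpha$ crosses a non-separating loop $\gamma$ exactly once, the configuration $\gamma\cup\alpha\cup\delta_+\cup\delta_-$ does \emph{not} lie in an annular neighbourhood of $\gamma$ (a regular neighbourhood of $\gamma\cup\alpha$ is not an annulus), so Lemma~\ref{annulus} does not supply the hyperbolic identity $\lambda_\gamma\lambda_\alpha=\lambda_{\delta_+}+\lambda_{\delta_-}$. Finally, the coefficients $Y_\pm$ should not appear on the hyperbolic side at all: the lambda-length comparison is a coefficient-free statement.

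The paper's proof is designed precisely to avoid any hyperbolic verification of skein relations. For non-separating $\gamma$ it uses no skein relation whatsoever: one builds two arcs $\gamma_i=\alpha_i\gamma\alpha_i^{-1}$ disjoint from $\gamma$ that bound an annulus containing $\gamma$, completes them to a triangulation, and observes that $x_\gamma$ then only involves the arcs inside that annulus, so Lemma~\ref{annulus} applies directly. For separating $\gamma$ (with one side containing no marked points) it uses a single algebraic skein relation $x_\gamma x_\alpha=x_R$ from~\cite{MW}, and obtains its lambda-length counterpart $\lambda_\gamma\lambda_\alpha=\lambda_R$ not from trigonometry but from the algebra itself: the subsurface $S_0$ containing all the relevant geodesics is embedded isometrically into an auxiliary surface $S'$ on which every participating curve is non-separating, so $x_\beta=\lambda_\beta$ is already known there by the first case, and the identity transfers back to $S$. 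If you want to salvage your induction, you must either import this embedding trick for the hyperbolic identity, or actually develop the decorated-trace calculus proving the mixed arc/loop skein relations --- substantial work that your proposal leaves undone.
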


\begin{proof}
First, suppose that the loop $\gamma$ is non-separating.
Let $q$ be a point on $\gamma$ and $p$ be a marked point in some boundary component of $S$ (it does exist since $S$ is unpunctured). Let $\alpha_1$ and $\alpha_2$ be two disjoint non-self-intersecting paths from $p$ to $q$ approaching $\gamma$ from two different sides and such that $\alpha_i\cap \gamma=q$ (here we use that $\gamma$ is non-separating). Consider the arcs $\gamma_i=\alpha_i\gamma\alpha_i^{-1}$, $i=1,2$
(see Fig.~\ref{a11emb}). Notice that these arcs cut off an annulus, and this annulus contains the curve $\gamma$.
Consider a triangulation of $S$ containing the arcs $\gamma_1$ and $\gamma_2$. 
In the cluster corresponding to this triangulation the function $x_\gamma$ is expressed through the lambda lengths of 
$\gamma_1$, $\gamma_2$ and two arcs lying in the annulus. Using Lemma~\ref{annulus} we see that  $x_\gamma$ is the lambda length of $\gamma$. 

\begin{figure}[!h]
\begin{center}
\psfrag{g}{$\gamma$}
\psfrag{g1}{$\gamma_1$}
\psfrag{a1}{$\alpha_1$}
\psfrag{g2}{$\gamma_2$}
\psfrag{a2}{$\alpha_2$}
\epsfig{file=./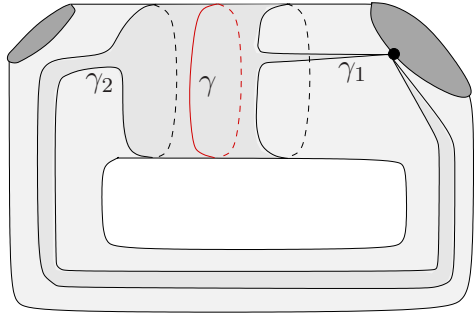,width=0.39\linewidth}
\caption{Construction of curves $\gamma_1$ and $\gamma_2$ for a non-separating closed loop $\gamma$}
\label{a11emb}
\end{center}
\end{figure}

Now, suppose that $\gamma$ is a separating curve. Let $S_1$ and $S_2$ be the connected components of  $S\setminus \gamma$. If each of $S_1$ and $S_2$ contains at least one marked point, we proceed as above (i.e. we build an annulus and then apply  Lemma~\ref{annulus}).
So, we may assume that $S_1$ contains no marked points, which implies that $S_1$ is a surface of genus at least $1$.
Let $\alpha$ be an arc which emanates from a marked point, then intersects $\gamma$, then goes around some non-separating non-self-intersecting loop inside $S_1$, then intersects $\gamma$ again and finally returns to the marked point (see Fig.~\ref{f-lambda}(a) and Fig.~\ref{f-lambda}(b)). Let $p_1$ and $p_2$ be the two points of intersection $\gamma\cap\alpha$.

Consider the resolution $R=R_{p_1,p_2}(\gamma\cup\alpha)$ of the multicurve $\gamma\cup \alpha$ in points $p_1$ and $p_2$.
By Lemma~\ref{skein-surface} we have
\begin{equation}
\label{1}
x_\gamma x_\alpha=x_R.
\end{equation}
Note that $R$ contains no separating curves and all curves of $R$ lie in the shaded area of the surface, so the value of $x_R$ is equal to the product of lambda lengths of the curves in $R$. Further, the value of $x_\alpha$ is also equal to the lambda length of $\alpha$. 
Thus, it is sufficient to prove that lambda lengths of curves included in the equation~(\ref{1}) satisfy similar relation
\begin{equation}
\label{2}
\lambda_\gamma \lambda_\alpha=\lambda_R,
\end{equation}
where $\lambda_\alpha$ and $\lambda_\alpha$ are the lambda lengths of $\alpha$ and $\gamma$, and $\lambda_R$ denoted the product of lambda lengths of the curves in $R$.

\begin{figure}[!h]
\begin{center}
\psfrag{a_}{\small (a)}
\psfrag{b_}{\small (b)}
\psfrag{c}{\small (c)}
\psfrag{a}{\scriptsize $\alpha$}
\psfrag{S1}{$S_0$}
\psfrag{g}{\scriptsize $\gamma$}
\psfrag{r1}{\scriptsize $\rho_1$}
\psfrag{r2}{\scriptsize $\rho_2$}
\psfrag{r}{\scriptsize $\rho$}
\psfrag{t}{\scriptsize $\tau$}
\epsfig{file=./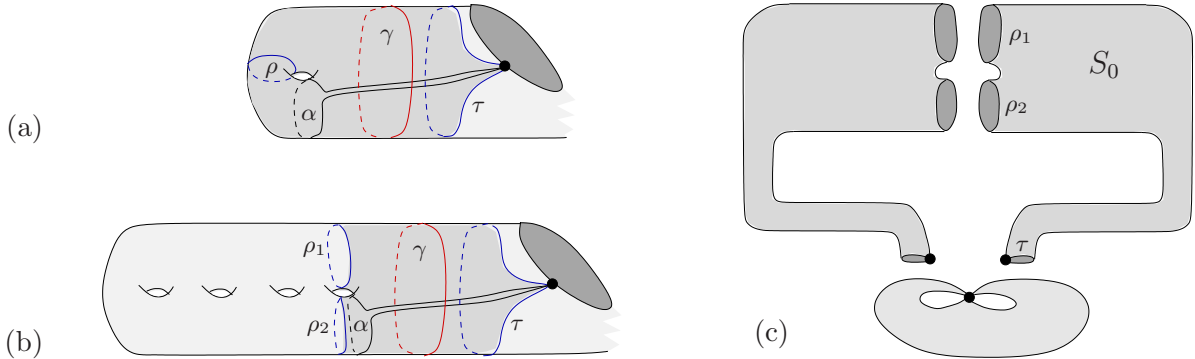,width=0.99\linewidth}
\caption{To the proof of Lemma~\ref{length on surf}: construction of $S_0$ and $S'$}
\label{f-lambda}
\end{center}
\end{figure}

To prove~(\ref{2}), we cut the shaded area $S_0$ out of $S$ (by an arc $\tau$ and either a geodesic loop $\rho$ in case of genus $1$
or by two geodesics loops $\rho_1$ and $\rho_2$ in case of higher genus, see Fig.~\ref{f-lambda}(a) and 
 Fig.~\ref{f-lambda}(b) respectively). Note that geodesic curves included in the relations~(\ref{1}) and~(\ref{2}) all lie inside the obtained area $S_0$ (since some representatives of the same isotopy class do lie there, and the boundary of $S_0$ is geodesic).
We will build a new surface $S'$ containing $S_0$ such that none of the curves participating in the equations
will be separating on $S'$. Then we know from above that the value of $x_\beta$ will be equal to $\lambda_\beta$ for all curves $\beta$ participating in~(\ref{1}) and~(\ref{2}), and so the formula~(\ref{2}) immediately follows from~(\ref{1}). 
Since the surface $S_0$ is embedded in $S'$ isometrically, the same formula for lambda lengths holds for curves on $S$, which implies that the value of $x_\gamma$ is equal to $\lambda_\gamma$ as required.

To build the surface $S'$ we proceed as on Fig.~\ref{f-lambda}(c): we take two copies of $S_0$, glue them together along the boundary components containing no marked points, then take a triangle with three vertices identified and attach free
boundary components to two of the boundary components of the triangle.
None of the curves contained in $S_0$ is separating in the constructed surface $S'$. 

\end{proof}

\subsubsection{Laurent polynomials for curves on orbifolds}
\label{Laurent poly}

Now we are ready to define the Laurent polynomials for curves on orbifolds in a given cluster.

For an arc or a pending arc $\gamma$ the Laurent polynomial $x_\gamma$ is the Laurent expansion of the cluster variable
corresponding to $\gamma$.

Let $S$ be a (partial) unfolding of $\h\O$. Let $\gamma$ be an arc or a pending arc.
Let $\bar \gamma$ be a connected component of the lift of $\gamma$ to $S$.
In view of Remark~\ref{spec}, $x_\gamma$ coincides with the specialization of the Laurent polynomial $x_{\bar \gamma}$.
This motivates the following definition.


\begin{definition}[Laurent polynomials for semi-closed loops]
\label{def-for semi}
Let $\gamma$ be a semi-closed loop on $\h\O$, let $S$ be an unfolding of $\h\O$.
Let $\bar \gamma$ be a connected component of the lift of $\gamma$ to $S$. 
Then define $x_\gamma=x_{\bar \gamma}|_{\h\O}$.

\end{definition}

\begin{remark}
The expression $x_{\bar \gamma}$ in Definition~\ref{def-for semi} depends on the choice of the unfolding $S$ of $\h\O$, 
however, the result $x_\gamma$ does not depend on the unfolding. Indeed, by Lemma~\ref{length on surf}, the value of $x_{\gamma}$ is the lambda length of $\bar \gamma$, which implies that after the specialization of variables the function
$x_\gamma$ is equal to $e^{l(\gamma)}$, where $l(\gamma)$ is the length of $\gamma$ (independently on the choice of the unfolding $S$). Therefore, the value of $x_\gamma$ depends on lambda lengths of arcs of a triangulation only, and thus $x_\gamma$ does not depend on the unfolding $S$.

By the same reason, the definition of $x_\gamma$ does not depend on a connected component $\bar \gamma$ in the lift of 
$\gamma$.

\end{remark}

The remark above motivates the following definition.

\begin{definition}
\label{lambda-semi}
The {\it lambda length} of a semi-closed loop $\gamma$ is defined by $\lambda_\gamma=e^{l(\gamma)}$, where $l(\gamma)$ is the length of $\gamma$.
\end{definition}

\begin{definition}[Laurent polynomials for closed curves]
\label{def-for loops}
Let $\gamma$ be a closed curve on $\h\O$, let $S$ be an unfolding of $\h\O$ such that $\gamma$ lifts well in $S$
(it does exist by Lemma~\ref{lifts well}).
Let $\bar \gamma$ be a connected component in the lift of $\gamma$ to $S$. 
Then define $x_\gamma=x_{\bar \gamma}|_{\h\O}$.

\end{definition}

\begin{remark}
 Similarly to the case of semi-closed loops, the expression $x_{\bar \gamma}$ in the definition of the Laurent polynomial $x_\gamma$ for a closed curve $\gamma$ depends on the choice of the unfolding. However, due to the fact that the Laurent polynomial $x_{\gamma_i}$ represents the lambda length of $\bar \gamma$ (see  Lemma~\ref{length on surf}), $x_\gamma$ represents the 
 lambda length of $\gamma$. Hence, the definition of $x_\gamma$ is independent on the choice of unfolding $S$, neither it depends on the choice of the connected component $\bar \gamma$ in the lift of $\gamma$.

\end{remark}

\begin{remark}
\label{specification}
Summarizing Definitions~\ref{def-for semi} and~\ref{def-for loops} we see that if a curve $\gamma\subset\h\O$ lifts well to an unfolding $S$ and $\bar \gamma$ 
is a connected component of the lift, then  $x_\gamma=x_{\bar \gamma}|_{\h\O}$.

This natural property will be heavily used below.

\end{remark}

\begin{definition}[Laurent polynomials for multicurves and self-intersecting curves]
\label{Lself}
Let $C=\gamma_1\cup \dots\cup \gamma_k$ be a multicurve. Define $x_C=x_{\gamma_1}\cdot\dots\cdot x_{\gamma_k}$.

For a formal sum $\Sigma=\sum\limits_{j=1}^m C_j$ of multicurves $C_j$ define $x_{\Sigma}=
\sum\limits_{j=1}^m x_{C_j}$.

Given a curve or a multicurve $C$, define $R(C)$ to be a complete resolution of all intersection points of $C$.
We consider $R(C)$ as a formal sum of multicurves, each summand having non-self-intersecting components.

For a self-intersecting curve $\gamma$ on $\h\O$ define the Laurent polynomial $x_\gamma$ as 
$x_\gamma=x_{R(\gamma)}$
where $R(\gamma)$ is the complete resolution of all intersection points of $\gamma$.

\end{definition}

\begin{definition}[Constants for contractible curves]
\label{const}
If $\gamma$ is a contractible curve then

- if $\gamma$ is a closed contractible curve then $x_\gamma=-2$;

- if $\gamma$ is a closed contractible curve around an orbifold point, then $x_\gamma=0$;

- if $\gamma$ is a contractible curve with both ends in an orbifold point then $x_\gamma=2$;

\end{definition}

\begin{remark} 
\label{rem loop var}
One can see that Definition~\ref{const} agrees with Definition~\ref{def skein}.

\end{remark}

\subsection{Proof of skein relations on orbifolds}
\label{skein-proof}
In this section we show the skein relations for the orbifold by proving the following theorem.

\begin{theorem}
\label{skein-list}
Let $C$ be a multicurve on an unpunctured orbifold $\h\O$. Let $p$ be an intersection point of $C$ (or a point of a self-intersection of some curve in $C$). Then $x_C=x_{R_p(C)}$, where $R_p(C)$ is the resolution of $C$ at the intersection point $p$
as defined in Table~\ref{skein-table}. 

\end{theorem}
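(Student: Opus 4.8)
```latex
\textbf{Proof proposal.}

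The plan is to reduce the orbifold skein relation entirely to the already-established surface skein relation (Lemma~\ref{skein-surface}) via an unfolding, using the careful compatibility between resolutions and lifts recorded in Remark~\ref{factor of res}. First I would isolate the local nature of the problem: by Lemma~\ref{AB=BA} a resolution at a single point $p$ is a local procedure, so I may assume $C$ consists only of the curves passing through $p$, i.e. $C=\h C$ in the notation of Remark~\ref{factor of res}. The curves meeting at $p$ are finitely many arcs, pending arcs, semi-closed loops and closed loops. By Lemma~\ref{lifts well} (together with the convention that ordinary, pending and semi-closed curves always lift well) I can choose an unfolding $S$ of $\h\O$ in which every component of $\h C$ lifts well. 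This is the crucial preparatory step that the whole section has been building toward, and it is what lets me move the computation upstairs to a genuine surface.

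Next I would transport the identity to $S$. Fix a lift $\bar p$ of $p$ and let $\bar C$ be the complete collection of lifts of the curves of $\h C$ through $\bar p$. Since $S$ is an honest unpunctured surface, the surface skein relation of Lemma~\ref{skein-surface} applies to $\bar C$ at $\bar p$ in the coefficient-free form $x_{\bar C}=x_{R_{\bar p}(\bar C)}$ (I would work coefficient-free, matching the statement of Theorem~\ref{skein-list}). Now I apply the specialization $|_{\h\O}$ to both sides. On the left, Remark~\ref{specification} gives $x_{\bar C}|_{\h\O}=x_C$ because each component lifts well and $x_\gamma=x_{\bar\gamma}|_{\h\O}$ by Definitions~\ref{def-for semi} and~\ref{def-for loops}; one must be slightly careful that a single arc upstairs specializes to the correct factor, but this is exactly the content of Remark~\ref{spec} extended multiplicatively over the components of the multicurve. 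On the right, Remark~\ref{factor of res} asserts precisely that the covering map sends $R_{\bar p}(\bar C)$ to $R_p(\h C)$, so after specialization $x_{R_{\bar p}(\bar C)}|_{\h\O}=x_{R_p(C)}$. Combining, $x_C=x_{R_p(C)}$, which is the claim.

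The main obstacle I anticipate is the bookkeeping at the special summands rather than any deep geometry. The resolution $R_p(C)$ in Table~\ref{skein-table} carries the extra term $2C_=$ in the thick-thick regular-crossing case, the degeneracy factors $0$, $2$, $-2$ for contractible configurations, and the reinterpretation of a thick curve not incident to an orbifold point as two isotopic thin curves. I would verify that each of these matches what the surface resolution produces \emph{after} pushing down: a closed curve around two orbifold points upstairs pushes to a semi-closed curve, a closed curve around one orbifold point pushes to something contractible and hence to the constant $0$ (Definition~\ref{const}), and a pair of thin lifts running around an orbifold endpoint assembles into one pending or semi-closed curve as described in Definition~\ref{def skein}. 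The multiplicity $2$ on $C_=$ and the constant factors $\pm2$, $0$ are forced precisely because the branched cover is two-to-one near an orbifold point, so a single downstairs curve has two preimage strands that the resolution separates; checking that the degree of the covering and these numerical factors are consistent is the one place genuine care is needed. Everything else follows formally from the identity $x_\gamma=x_{\bar\gamma}|_{\h\O}$ and the locality of resolution.
```
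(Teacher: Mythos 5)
Your reduction works only in the cases where all curves through $p$ admit a \emph{common} unfolding in which they lift well, and your justification for that step overreaches what Lemma~\ref{lifts well} actually provides. That lemma is a statement about a \emph{single} closed curve: given one closed curve $\gamma$, there exists an unfolding (possibly of degree $4$ or $8$, built by a delicate choice of cuts meeting $\gamma$ an even number of times) in which $\gamma$ lifts well. It does not say, and its proof does not show, that two closed curves $\gamma_1,\gamma_2$ crossing at a regular point can be made to lift well \emph{simultaneously} in one unfolding: the construction requires every branch cut to meet the given curve an even number of times, and imposing this parity condition for two curves at once may be impossible (the proof of Lemma~\ref{lifts well} already has to fight parity obstructions for one curve). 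So when two or more components of $\h C$ through $p$ are closed curves, your appeal to a common unfolding is a genuine gap, not bookkeeping.

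This is exactly the case the paper treats separately. For the crossing of two closed curves (its Case 1b, Lemma~\ref{c}), the paper avoids simultaneous lifting entirely by an algebraic detour: pick an arc $\alpha$ crossing $\gamma_2$ at a point $q$; since $\alpha$ is not closed, the crossing $q$ can be resolved by the already-established case (your argument, which is the paper's Lemma~\ref{a}), giving $x_{\gamma_2}x_{\alpha}=x_{R_q(\gamma_2\cup\alpha)}$; the curves in $R_q(\gamma_2\cup\alpha)$ are no longer closed, so the crossing at $p$ with $\gamma_1$ can again be resolved by Lemma~\ref{a}; then commutativity of resolutions (Lemma~\ref{AB=BA}) lets one exchange the order of $R_p$ and $R_q$, and finally $x_\alpha$ is cancelled from both sides because the Laurent ring has no zero divisors, yielding $x_{\gamma_1}x_{\gamma_2}=x_{R_p(\gamma_1\cup\gamma_2)}$. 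Your proposal correctly covers the remaining cases (at most one closed curve through $p$, which is Lemma~\ref{a}, and two thick curves meeting at an orbifold point, which is Lemma~\ref{orb,two} since those resolutions contain no closed curves), but without something like the auxiliary-arc argument, the closed-closed crossing case is unproved.
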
  

To prove Theorem~\ref{skein-list} we consider several cases. 
First, notice that if the point $p$ is a point of self-intersection
of one curve $\gamma_i\in C$, then there is nothing to prove (the statement holds by the definition of $x_{\gamma_i}$ for a self-intersecting curve $\gamma_i$). This holds for both regular and orbifold self-intersection points. 
So, we may assume that $p$ is an intersection point of two curves $\gamma_1$ and $\gamma_2$. Clearly, it is sufficient to prove the theorem for the case $C=\gamma_1\cup \gamma_2$. 
Now we are left with the following cases:

\begin{itemize}
\item[1.] $p$ is regular. Then either
\begin{itemize}
\item[1a.]
 both $\gamma_1$ and $\gamma_2$ lift well in some unfolding $S$ (in particular, this works if at least one of $\gamma_i$ is not a closed curve), this case is considered in Lemma~\ref{a}; 
\item[1b.] or both $\gamma_1$ are $\gamma_2$ are closed curves, this case is considered in Lemma~\ref{c}.
\end{itemize}

\item[2.] $p$ is an orbifold point, $\gamma_1$ are $\gamma_2$ are distinct thick curves. This case in considered in Lemma~\ref{orb,two}.

\end{itemize}

Hence, the proof of Theorem~\ref{skein-list} is reduced to Lemmas~\ref{a}--\ref{orb,two}.

\begin{lemma}
\label{a}
Let $\gamma_1$ and $\gamma_2$ be two curves intersecting at a regular point $p$. Assume also that there exists an unfolding of $\h\O$ such that each of the two curves $\gamma_1$ and $\gamma_2$ lifts well. Then  $x_{\gamma_1\cup \gamma_2}=x_{R_p(\gamma_1\cup\gamma_2)}$.

In particular, this equation holds if at least one of $\gamma_1$ and $\gamma_2$ is not a closed curve.

\end{lemma}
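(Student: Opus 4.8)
The plan is to transport the problem to the unfolding, where the skein relation is already known, and then to push the resulting identity back down to $\h\O$ by specialization. Since $\gamma_1$ and $\gamma_2$ lift well in some unfolding $S$ by hypothesis, fix such an $S$, choose a preimage $\bar p\in S$ of the regular point $p$, and let $\bar\gamma_1,\bar\gamma_2$ be the connected components of the lifts of $\gamma_1,\gamma_2$ passing through $\bar p$; write $\bar C=\bar\gamma_1\cup\bar\gamma_2$. Because an unfolding carries no orbifold points, $S$ is an honest surface and $\bar p$ is an ordinary point of it, so the coefficient-free surface skein relation of Lemma~\ref{skein-surface} applies at $\bar p$ and gives $x_{\bar C}=x_{\bar C_+}+x_{\bar C_-}$, where $R_{\bar p}(\bar C)=\bar C_++\bar C_-$.

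First I would specialize this identity to $\h\O$. Specialization $f\mapsto f|_{\h\O}$ is a ring homomorphism, so the left-hand side becomes $(x_{\bar\gamma_1}x_{\bar\gamma_2})|_{\h\O}=(x_{\bar\gamma_1}|_{\h\O})(x_{\bar\gamma_2}|_{\h\O})$, which by Remark~\ref{specification} equals $x_{\gamma_1}x_{\gamma_2}=x_{\gamma_1\cup\gamma_2}$ (Definition~\ref{Lself}). For the right-hand side I would invoke Remark~\ref{factor of res}: the image of the surface resolution $R_{\bar p}(\bar C)$ under the covering map $S\to\h\O$ is exactly the orbifold resolution $R_p(\gamma_1\cup\gamma_2)$ of Table~\ref{skein-table}. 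Thus it remains to check that each specialized summand $x_{\bar C_\pm}|_{\h\O}$ equals the value of its image multicurve on $\h\O$, so that the two specialized summands add up to $x_{R_p(\gamma_1\cup\gamma_2)}$.

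The heart of the matter --- and where I expect the real work to be --- is this last compatibility, carried out component by component on $\bar C_+$ and $\bar C_-$. For a component $\bar\delta$ of $\bar C_\pm$ whose image $\delta$ lifts well, the equality $x_{\bar\delta}|_{\h\O}=x_{\delta}$ is again Remark~\ref{specification}. The subtle cases are exactly those encoded in Table~\ref{skein-table} and in Definition~\ref{const}: a component of the surface resolution may project to a thick (pending or semi-closed) curve, to a contractible closed loop, to a loop around an orbifold point, or to a loop cutting out a disc, and each of these must be matched against the prescribed substitution ($-2$, $0$, or $2$, or the reinterpretation of a thick curve as two isotopic thin curves). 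I would verify each case against the corresponding local picture near $\bar p$, using that the covering is a local isometry and Remark~\ref{rem loop var}, which records that Definition~\ref{const} was chosen precisely to agree with the resolution rules; lambda lengths then make the constants come out correctly.

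Finally, for the ``in particular'' clause, suppose one of $\gamma_1,\gamma_2$ is not closed, say $\gamma_1$. A non-closed curve lifts well in every unfolding by Definition~\ref{def well}, while an unfolding in which the remaining (possibly closed) curve $\gamma_2$ lifts well exists by Lemma~\ref{lifts well}; choosing that unfolding, both curves lift well simultaneously and the hypothesis of the lemma is met.
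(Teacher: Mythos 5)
Your proposal is correct and takes essentially the same approach as the paper's own proof: pass to an unfolding where both curves lift well, apply the surface skein relation at a lift $\bar p$ of $p$, use the fact that the surface resolution projects to the orbifold resolution (Remark~\ref{factor of res}), and descend via the specialization $x_\gamma=x_{\bar\gamma}|_{\h\O}$. The only difference is that you explicitly flag the component-by-component matching of specialized summands with the resolution constants of Table~\ref{skein-table} and Definition~\ref{const} as the step requiring verification, which the paper treats as immediate from the definitions.
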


\begin{proof}
The statement follows from the definition of resolution $R_p(C_1\cup C_2)$.

More precisely, let $S$ be an unfolding of $\h\O$ such that each of the two curves $\gamma_1$ and $\gamma_2$ lifts well. Let $\bar p\in S$ be any lift of $p$, denote by $\bar \gamma_1\subset S$ and $\bar \gamma_2\subset S$ connected components of the lifts of $\gamma_1$ and $\gamma_2$ respectively containing $\bar p$. Then, according to Table~\ref{skein-table}, $R_{\bar p}(\bar \gamma_1\cup\bar \gamma_2)$ projects to $R_{p}(\gamma_1\cup\gamma_2)$ (see also Remark~\ref{factor of res}).
On the other hand, $x_{\bar \gamma_1}x_{\bar \gamma_2}=x_{R_{\bar p}(\bar \gamma_1\cup\bar \gamma_2)}$ as skein relations hold on the surface $S$. Since $x_\gamma=x_{\bar \gamma}|_{\h\O}$ for each curve $\gamma$ which lifts well, this implies  
$x_{\gamma_1\cup \gamma_2}=x_{\gamma_1}x_{\gamma_2}=x_{R_p(\gamma_1\cup\gamma_2)}$.

Now, if none of $\gamma_1$ and $\gamma_2$ is a closed curve, then both $\gamma_1$ and $\gamma_2$ lift well in any unfolding, and we may apply the reasoning above. If $\gamma_1$ is not a closed curve but $\gamma_2$ is a closed curve, then
by Lemma~\ref{lifts well} there exists an unfolding $S$ where $\gamma_2$ lifts well, so we can also apply the reasoning above.

\end{proof}

\begin{lemma}
\label{c}
Let $\gamma_1$ and $\gamma_2$ be two curves intersecting at a regular point $p$. 
Then  $x_{\gamma_1\cup \gamma_2}=x_{R_p(\gamma_1\cup\gamma_2)}$.

\end{lemma}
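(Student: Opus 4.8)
The plan is to reduce the case of two intersecting closed curves to the already-established Lemma~\ref{a} by passing to an unfolding in which \emph{both} curves lift well, and the main work is to produce such a single unfolding simultaneously for $\gamma_1$ and $\gamma_2$. By Lemma~\ref{lifts well} each closed curve individually lifts well in \emph{some} unfolding, but the two resulting unfoldings need not coincide, so the first task is to arrange a common unfolding. I would attempt this by re-running the construction in the proof of Lemma~\ref{lifts well}, but now choosing the cutting paths so that \emph{both} $\gamma_1$ and $\gamma_2$ intersect each cut an even number of times. The key combinatorial input there—that among any three orbifold points two can be joined by a path crossing a given curve an even number of times—should be upgraded to a statement controlling parity against two curves at once: among sufficiently many orbifold points one can find a pair joined by a path crossing \emph{both} $\gamma_1$ and $\gamma_2$ evenly, since the parities of intersection with $\gamma_1$ and $\gamma_2$ define a map to $(\Z/2)^2$ and a pigeonhole argument produces a repeated value.

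Once a common unfolding $S$ is in hand in which both $\gamma_1$ and $\gamma_2$ lift well, the proof concludes exactly as in Lemma~\ref{a}: pick a lift $\bar p$ of $p$, let $\bar\gamma_1,\bar\gamma_2$ be the components of the lifts through $\bar p$, apply the surface skein relation (Lemma~\ref{skein-surface}) to get $x_{\bar\gamma_1}x_{\bar\gamma_2}=x_{R_{\bar p}(\bar\gamma_1\cup\bar\gamma_2)}$, and then specialize to $\h\O$. By Remark~\ref{specification}, $x_{\gamma_i}=x_{\bar\gamma_i}|_{\h\O}$ since each $\gamma_i$ lifts well, and by Remark~\ref{factor of res} the image of $R_{\bar p}(\bar\gamma_1\cup\bar\gamma_2)$ under the covering map is precisely $R_p(\gamma_1\cup\gamma_2)$. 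Specializing both sides of the surface relation then yields $x_{\gamma_1\cup\gamma_2}=x_{\gamma_1}x_{\gamma_2}=x_{R_p(\gamma_1\cup\gamma_2)}$, as required.

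The hard part will be the simultaneous parity control, i.e. guaranteeing a single unfolding that works for both curves. There are two potential sources of trouble: the degenerate low-complexity cases with very few orbifold points (mirroring the final paragraphs of the proof of Lemma~\ref{lifts well}, where $\h\O$ is a sphere or a surface with exactly one orbifold point and one must instead cut along an auxiliary non-separating closed curve), and the bookkeeping needed to ensure the extra cuts introduced to fix parity for $\gamma_2$ do not spoil the already-achieved even parity for $\gamma_1$. I expect the resolution to be that, since we only need \emph{even} intersection numbers and these are $\Z/2$-homological invariants, the freedom to add auxiliary non-separating loops (whose $\Z/2$ intersection classes can be adjusted by taking sums of homology classes) is enough to kill both obstructions at once; the degree of the final unfolding may grow (to $2^k$ for slightly larger $k$), but Lemma~\ref{a} applies verbatim to any unfolding, so this causes no difficulty. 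A clean alternative I would keep in reserve is an induction: first pass to an unfolding $S'$ where $\gamma_1$ lifts well, observe that in $S'$ every component of the complete lift $\bar\gamma_2$ of $\gamma_2$ is again a closed curve to which Lemma~\ref{lifts well} applies, and then take a further unfolding of $S'$ resolving all components of $\bar\gamma_2$ simultaneously—this sidesteps the parity juggling entirely at the cost of a two-stage tower of coverings.
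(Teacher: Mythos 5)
Your reduction to Lemma~\ref{a} hinges entirely on producing a \emph{single} unfolding in which both closed curves lift well, and this is precisely the step you never establish; it is a strictly stronger statement than Lemma~\ref{lifts well}, and the paper does not prove it either. Your pigeonhole idea (parities in $(\Z/2)^2$) only addresses the generic step of the construction; the bulk of the proof of Lemma~\ref{lifts well} is devoted to the degenerate configurations (few orbifold points, no usable boundary, the case $\h S=\h\O$), and those are exactly where a two-curve version becomes delicate. Worse, a purely ``parity'' (elementary-abelian) strategy provably cannot work in general: already for a \emph{single} closed curve whose $\Z/2$-homology class equals that of a small loop around an orbifold point, no cover obtained from a homomorphism to $(\Z/2)^k$ can simultaneously be torsion-free and make the curve lift well (any such homomorphism must kill the curve's class, hence also the elliptic generator's class), which is why the paper's proof resorts to towers of coverings and the ``Claim''. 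Your fallback tower argument is actually incorrect: lifting well is monotone down a tower. If $S''\to S'\to\h\O$ are coverings, a component of the lift of $\gamma_2$ to $S''$ covers a component of its lift to $S'$, so the degrees over $\gamma_2$ only multiply; hence if $\gamma_2$ fails to lift well in $S'$ (some component covers $\gamma_2$ with degree $\ge 2$), no further covering of $S'$ can repair this. In subgroup terms, the final subgroup $H''\subseteq H'$ must contain all conjugates of $[\gamma_2]$, which forces $H'$ to contain them already. So the ``clean alternative'' you keep in reserve does not close the gap, and the main route remains an unproved (though plausible) topological claim.

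The paper avoids this problem altogether by an algebraic trick that needs only the one-curve Lemma~\ref{lifts well}: choose an auxiliary \emph{arc} $\alpha$ crossing $\gamma_2$ at a point $q$ (arcs always lift well, so Lemma~\ref{a} applies to any pair involving $\alpha$ or involving only one closed curve). One then computes $x_{\gamma_1}x_{\gamma_2}x_{\alpha}$ in two ways: resolve $q$ first (Lemma~\ref{a}), note that $R_q(\gamma_2\cup\alpha)$ consists of ordinary curves, resolve $p$ (Lemma~\ref{a} again), and use commutativity of resolutions (Lemma~\ref{AB=BA}) together with one more application of Lemma~\ref{a}, to get $x_{\gamma_1}x_{\gamma_2}x_{\alpha}=x_{\alpha}x_{R_p(\gamma_1\cup\gamma_2)}$; since the Laurent polynomial ring has no zero divisors, $x_\alpha$ cancels. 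If you want to salvage your approach, you would need to state and prove the simultaneous-lifting lemma in full (including all degenerate cases, with non-elementary covers); as it stands, the proposal has a genuine gap at its central step, and its backup plan fails for a structural reason.
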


\begin{proof}
In view of Lemma~\ref{a} it is sufficient to prove the statement for the case when both $\gamma_1$ and $\gamma_2$
are closed curves. 

It is easy to see that for every non-contractible closed curve on $\h\O$ there exists an arc intersecting this curve. 
Let $\alpha$ be an arc intersecting $\gamma_2$ at some point $q$
($\alpha$ may have more intersection points with $\gamma_2$ and $\gamma_1$).
%
Since $\alpha$ is an arc we may apply Lemma~\ref{a} to resolve the intersection $q$
and get $x_{\gamma_2}x_{\alpha}=x_{R_q(\gamma_2\cup\alpha)}$, which implies
$$
x_{\gamma_1}x_{\gamma_2}x_{\alpha}=x_{\gamma_1}x_{R_q(\gamma_2\cup\alpha)}.
$$
Note that  as $\alpha$ is an arc and $\gamma_2$ is a closed curve, the resolution $R_q(\gamma_2\cup\alpha)$ is a sum of two ordinary curves, so we can apply  Lemma~\ref{a} to resolve the intersection $p$ and obtain
$$
x_{\gamma_1}x_{R_q(\gamma_2\cup\alpha)}=x_{R_p(\gamma_1\cup R_q(\gamma_2\cup\alpha))}.
$$
On the other hand, by  Lemma~\ref{AB=BA} we have $R_p(\gamma_1\cup R_q(\gamma_2\cup\alpha))=R_q(\alpha\cup R_p(\gamma_1\cup\gamma_2))$, which implies
 $x_{R_p(\gamma_1\cup R_q(\gamma_2\cup\alpha))}=x_{R_q(\alpha\cup R_p(\gamma_1\cup\gamma_2))}$.
Since $\alpha$ is not a closed curve, we apply Lemma~\ref{a} to have
$$
x_{R_q(\alpha\cup R_p(\gamma_1\cup\gamma_2))}=x_{\alpha}x_{R_p(\gamma_1\cup\gamma_2)}.
$$
Summarizing the above computation, we obtain
$$
x_{\gamma_1}x_{\gamma_2}x_{\alpha}=x_{\alpha}x_{R_p(\gamma_1\cup\gamma_2)}.
$$
Since the ring of Laurent polynomials has no zero divisors, this implies that 
$x_{\gamma_1}x_{\gamma_2}=x_{R_p(\gamma_1\cup\gamma_2)}$ which proves the lemma.

\end{proof}

\begin{lemma}
\label{orb,two}
Let $\gamma_1$ and $\gamma_2$ be two thick curves incident to the same  orbifold point $p$. 
Then  $x_{\gamma_1\cup \gamma_2}=x_{R_p(\gamma_1\cup\gamma_2)}$.

\end{lemma}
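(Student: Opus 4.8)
The plan is to reduce Lemma~\ref{orb,two} to the already-proven skein relations for regular intersection points, using the deformation trick described in Remark~\ref{skein-th}. Since $\gamma_1$ and $\gamma_2$ are both thick curves incident to the common orbifold point $p$, each of them is either a pending curve or a semi-closed curve. The key idea is that the Laurent polynomial $x_{\gamma_i}$ of a thick curve equals (after specialization) the lambda length of $\gamma_i$, and lambda lengths behave well under the geodesic deformations that push a thick curve off the orbifold point.

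First I would replace each thick curve by its ``thin shadow'' as in Remark~\ref{skein-th}: substitute a pending curve $\gamma_i$ by an ordinary curve $\t\gamma_i$ with both ends at the marked endpoint of $\gamma_i$, wrapping tightly around the orbifold point, and substitute a semi-closed curve by a closed curve encircling it. After this replacement the two curves $\t\gamma_1$ and $\t\gamma_2$ meet only at regular points near $p$ (the orbifold intersection at $p$ unfolds into transversal regular crossings of the thin curves). I would then apply Lemma~\ref{c} (or Lemma~\ref{a}) to resolve these regular intersections, and finally translate the resulting thin multicurves back into the orbifold picture using the reverse substitutions listed at the end of Remark~\ref{skein-th} (marked end around an orbifold point becomes a pending curve, closed curve around two orbifold points becomes a semi-closed curve, contractible loop around an orbifold point becomes $0$). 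By the last sentence of Remark~\ref{skein-th}, the output of this procedure is exactly the resolution $R_p(\gamma_1\cup\gamma_2)$ recorded in Table~\ref{skein-table}, third and fourth rows.

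To make this rigorous I would work directly in an unfolding $S$ of $\h\O$ in which both $\gamma_1$ and $\gamma_2$ lift well; such an $S$ exists because thick curves always lift well by Definition~\ref{def well}. I would choose a lift $\bar p$ of the orbifold point $p$; in $S$ the branch point $\bar p$ is a regular point (or the lifts $\bar\gamma_1,\bar\gamma_2$ cross transversally at regular points near $\bar p$), so the honest surface skein relation of Lemma~\ref{skein-surface} applies to $\bar\gamma_1\cup\bar\gamma_2$ at $\bar p$. The crucial compatibility statement is Remark~\ref{factor of res}: the image under the covering map $S\to\h\O$ of the surface resolution $R_{\bar p}(\bar\gamma_1\cup\bar\gamma_2)$ coincides with the orbifold resolution $R_p(\gamma_1\cup\gamma_2)$. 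Combining this with $x_{\gamma_i}=x_{\bar\gamma_i}|_{\h\O}$ (Remark~\ref{specification}) and the multiplicativity of specialization gives
$$
x_{\gamma_1}x_{\gamma_2}=\big(x_{\bar\gamma_1}x_{\bar\gamma_2}\big)\big|_{\h\O}=x_{R_{\bar p}(\bar\gamma_1\cup\bar\gamma_2)}\big|_{\h\O}=x_{R_p(\gamma_1\cup\gamma_2)},
$$
which is the claim.

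The main obstacle I anticipate is bookkeeping the constants and the wrapping behaviour correctly when the thick curves are deformed and re-substituted: I must verify that a contractible loop around the orbifold point in $S$ projects to the factor $0$ prescribed by Definition~\ref{const}, that the factors $2$ and $-2$ from Definition~\ref{const} match what the covering produces, and that the $2C_=$ term in Table~\ref{skein-table} is accounted for (this term arises precisely because the single orbifold crossing at $p$ lifts to a pair of regular crossings in $S$, doubling one of the resolution summands). Checking that these multiplicities produced by the two-to-one local structure of the covering at $\bar p$ agree with the entries of Table~\ref{skein-table} — in particular distinguishing the pending/pending, pending/semi-closed, and semi-closed/semi-closed subcases — is the delicate part; everything else is a direct transport of the surface relation through the unfolding.
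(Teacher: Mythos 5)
Your overall strategy is the paper's: both thick curves lift well in any unfolding (Definition~\ref{def well}), so one lifts $\gamma_1\cup\gamma_2$ to a surface, applies the surface skein relation there, and pushes the result back down using Remark~\ref{factor of res} and the specialization property of Remark~\ref{specification}. The displayed chain of equalities is exactly the intended argument. However, two points need repair. First, your description of the local picture at the branch point is wrong, and it infects precisely the part you call ``delicate''. The orbifold point $p$ has a \emph{single} preimage $\bar p$ in the covering, and the lifts $\bar\gamma_1,\bar\gamma_2$ cross there exactly \emph{once}; resolving that single crossing gives two terms, which project to $C_++C_-$, i.e.\ the fourth row of Table~\ref{skein-table} --- the only row relevant to this lemma. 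The $2C_=$ term never occurs here: it belongs to the third row, two thick curves crossing at a \emph{regular} point, where the intersection point has two preimages and the lifts cross twice (and that case is already disposed of by Lemma~\ref{a}, since thick curves lift well). If you carried out your planned verification believing that the orbifold crossing doubles in the cover, you would derive a resolution inconsistent with Table~\ref{skein-table}.

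Second, the equality $x_{R_{\bar p}(\bar\gamma_1\cup\bar\gamma_2)}|_{\h\O}=x_{R_p(\gamma_1\cup\gamma_2)}$ needs more than Remark~\ref{factor of res}, which is only the geometric identification of the two resolutions: one must also know that $x_\delta=x_{\bar\delta}|_{\h\O}$ for \emph{every} component $\delta$ of the downstairs resolution, and for a closed curve this holds only in an unfolding where that curve lifts well (Definition~\ref{def-for loops}). This is the key observation in the paper's proof, which your write-up omits: in each of the possible configurations of two thick curves meeting at $p$, the resolution $R_p(\gamma_1\cup\gamma_2)$ consists of ordinary, pending and semi-closed curves only --- no closed curves --- so every component lifts well in \emph{any} unfolding and the specialization step is legitimate term by term. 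Once you insert this observation and correct the local picture, your argument coincides with the paper's.
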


\begin{proof}
There are five possibilities for a pair of thick curves shown in Fig.~\ref{orb2} (depending on the types of thick curves $\gamma_1$ and $\gamma_2$ and the number of common vertices). 
In all of these cases  the resolution $R_p(\gamma_1\cup\gamma_2)$ contains no closed curves, which implies that
all curves involved in  $R_p(\gamma_1\cup\gamma_2)$ as well as the curves $\gamma_1$ and $\gamma_2$ lift well in any unfolding $S$.  So, the required equation follows from the surface version of skein relations.

\end{proof}

\begin{figure}[!h]
\begin{center}
\psfrag{g1}{\scriptsize $\gamma_1$}
\psfrag{g2}{\scriptsize $\gamma_2$}
\psfrag{p}{\scriptsize $p$}
\epsfig{file=./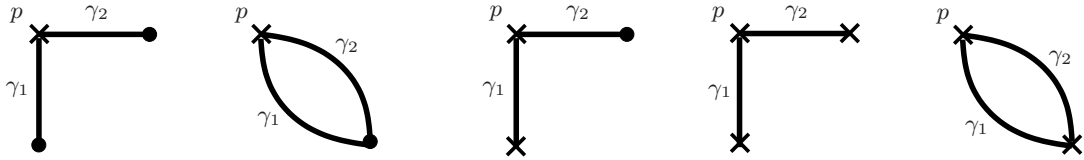,width=0.9\linewidth}
\caption{Pairs of thick curves intersecting at an orbifold point}
\label{orb2}
\end{center}
\end{figure}

In the case of principal coefficients, making use of Theorem~\ref{skein-list},~\cite[Propositions~6.4--6.6]{MSW2}, and then specializing variables, we obtain the following relations. 

\begin{lemma}
\label{skein-principal}
$$x_C=Y_+x_{C_+}+Y_-x_{C_-}+Y_=x_{C_=},$$
where $Y_+$, $Y_-$, $Y_=$ are monomials in variables $y_i$ computed in the following way:
$$Y_s=\prod\limits_{i=1}^n y_i^{p_{i,s}},\qquad s\in\{+,-,=\},$$
where $p_{i,s}$ is equal to one half of the difference of the intersection numbers of the elementary lamination $L_i$ with $C$ and $C_s$.

\end{lemma}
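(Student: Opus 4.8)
The plan is to upgrade the coefficient-free skein relation of Theorem~\ref{skein-list} to the principal-coefficient setting by transporting the surface relations of~\cite[Propositions~6.4--6.6]{MSW2} through an unfolding and then specializing. As in the proof of Theorem~\ref{skein-list} it suffices to treat a single intersection point $p$ of $C=\gamma_1\cup\gamma_2$, and Theorem~\ref{skein-list} already supplies the multicurves $C_+,C_-,C_=$ together with the coefficient-free identity $x_C=x_{C_+}+x_{C_-}+2x_{C_=}$ on $\h\O$; what remains is to determine the monomials $Y_+,Y_-,Y_=$. By Lemma~\ref{lifts well} I would choose an unfolding $S\to\h\O$ in which every component of $C$, $C_+$, $C_-$ and $C_=$ lifts well, let $\bar p$ be a lift of $p$, and let $\bar C$ be the complete collection of lifts of $\gamma_1,\gamma_2$ through $\bar p$. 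On $S$ we are in the surface situation, so~\cite[Propositions~6.4--6.6]{MSW2} express $x_{\bar C}$, in the algebra with principal coefficients on $S$, as a $\ZZ[\bar y]$-linear combination of the $x$-monomials of $R_{\bar p}(\bar C)$, whose projection to $\h\O$ is $R_p(C)$ by Remark~\ref{factor of res}.

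The second step is specialization. Applying $|_{\h\O}$ to the cluster-variable part sends $x_{\bar C}\mapsto x_C$ and each $x_{\bar C_s}\mapsto x_{C_s}$ for $s\in\{+,-,=\}$ by Remark~\ref{specification}, which recovers exactly the shape $x_C=Y_+x_{C_+}+Y_-x_{C_-}+Y_=x_{C_=}$; comparison with the coefficient-free identity fixes the leading structure, so only the exponents remain. On $S$ the exponent of $\bar y_i$ furnished by~\cite{MSW2} is one half of the difference of the geometric intersection numbers of the elementary lamination $\bar L_i$ with $\bar C$ and with $\bar C_s$, where $\bar L_i$ is the lift of $L_i$. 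Identifying, under the specialization of coefficients, the variables attached to the lifts of each $L_i$ with the single variable $y_i$, the exponent $p_{i,s}$ of $y_i$ becomes the sum over the fiber of these surface exponents. This fibrewise sum collapses to one half of the difference of the intersection numbers of $L_i$ with $C$ and with $C_s$ measured on $\h\O$, because the intersection number of the complete lift of $L_i$ with a single lift of $C$ equals the intersection number of $L_i$ with $C$ on the base. This is the claimed formula.

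The main obstacle is precisely this coefficient bookkeeping, where three points need care. First, one must pin down the intersection number of $L_i$ with a thick curve, and at an orbifold point, so that it agrees under the covering with the corresponding intersection numbers of $\bar L_i$, since thick curves and their elementary laminations behave specially near branch points. Second, one must verify that $p_{i,s}$ is a nonnegative integer: integrality follows because resolving a crossing changes the intersection number with each $\bar L_i$ by an even amount on $S$, and this parity descends through the covering. Third, although $x_{\bar C}$ and the intermediate expressions depend on the chosen unfolding, the final monomials $Y_s$ do not: the Laurent polynomials $x_C$ and $x_{C_s}$ are intrinsic by the remarks following Definitions~\ref{def-for semi} and~\ref{def-for loops}, and since the ring of Laurent polynomials has no zero divisors the identity determines $Y_+,Y_-,Y_=$ uniquely.
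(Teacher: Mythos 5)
Your proposal is correct and follows essentially the same route as the paper, whose proof consists precisely of combining Theorem~\ref{skein-list} with~\cite[Propositions~6.4--6.6]{MSW2} and specializing variables through an unfolding. The thick-curve bookkeeping you flag as the main obstacle is resolved in the paper exactly along the lines you indicate: intersection numbers on the orbifold are redefined so that every intersection of a lamination with a thick curve counts twice, which makes them agree with the intersection numbers of the complete lifts upstairs.
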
 

The intersection numbers of laminations with multicurves on orbifolds should be redefined as follows: every intersection of a lamination with a thick curve counts twice.

\section{Bases $\B^\circ$, $B^\s$ and $\B$ on orbifolds}
\label{bases}
\subsection{Definitions}

Recall from~\cite{MSW2} that given a closed loop $\gamma$, the {\it bangle} $Bang_k(\gamma)$ is a union of $k$ loops isotopic to $\gamma$, and the {\it bracelet} $Brac_k(\gamma)$ is a closed curve obtained by concatenating $\gamma$
exactly $k$ times (see Fig.~\ref{b-b-closed}). A {\it band} $Band_k(\gamma)$ is defined in~\cite{T2} as an average of all possible end pairings of $k$ copies of $\gamma\setminus I$, where $I\subset\gamma$ is a short interval. In other words, $Band_k(\gamma)$ can be considered as a weighted sum of unions of bangles and bracelets,
$$
Band_k(\gamma)=\frac{1}{k!}\sum\limits_{(k_1\le k_2\le\dots\le k_m)\in P(k)}(k_1,\dots,k_m)\,Brac_{k_1}(\gamma)\cup\dots\cup Brac_{k_m}(\gamma),
$$
where $P(k)$ is the set of all partitions of $k$, and $(k_1,\dots,k_m)$ is the number of permutations in the symmetric group $S_k$ with given cyclic structure.

\begin{figure}[!h]
\begin{center}
\epsfig{file=./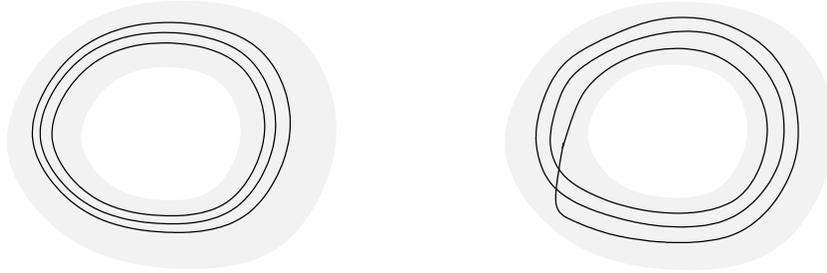,width=0.69\linewidth}
\caption{A bangle $Bang_3(\gamma)$ and a bracelet $Brac_3(\gamma)$ for a closed loop}
\label{b-b-closed}
\end{center}
\end{figure}

We define similar curves for a semi-closed curve $\gamma$ as follows: if we denote by $\bar \gamma$ the closed curve which is a lift of $\gamma$ in some degree two unfolding, then the lifts of $Bang_k(\gamma)$ and $Brac_k(\gamma)$ should coincide with $Bang_k(\bar\gamma)$ and $Brac_k(\bar\gamma)$. Once bracelets are defined, we can define $Band_k(\gamma)$ via the formula above.
 
More precisely, a {\it bangle} $Bang_k(\gamma)$ is a union of $k$ semi-closed loops isotopic to $\gamma$ (see Fig.~\ref{b-b-semi-closed}(a)).
A {\it bracelet} $Brac_k(\gamma)$ is defined differently for odd and even $k$. Denote by $q_1$ and $q_2$ the endpoints of $\gamma$. Then $Brac_{2m+1}(\gamma)$ 
is a semi-closed curve with endpoints $q_1$ and $q_2$ going around $q_1$ and $q_2$ exactly $m$ times (see Fig.~\ref{b-b-semi-closed}(b)). Finally, $Brac_{2m+2}(\gamma)$ is a semi-closed curve with both endpoints being $q_1$ (or $q_2$) going around $q_1$ and $q_2$ exactly $m$ times (see Fig.~\ref{b-b-semi-closed}(c)).

Note, that the equality $Bang_1(\gamma)=Brac_1(\gamma)=Band_1(\gamma)=\gamma$ still holds for semi-closed loops (since a closed loop $\gamma'$ around $\gamma$ is isotopic to $\gamma$).

\begin{figure}[!h]
\begin{center}
\psfrag{a}{\small (a)}
\epsfig{file=./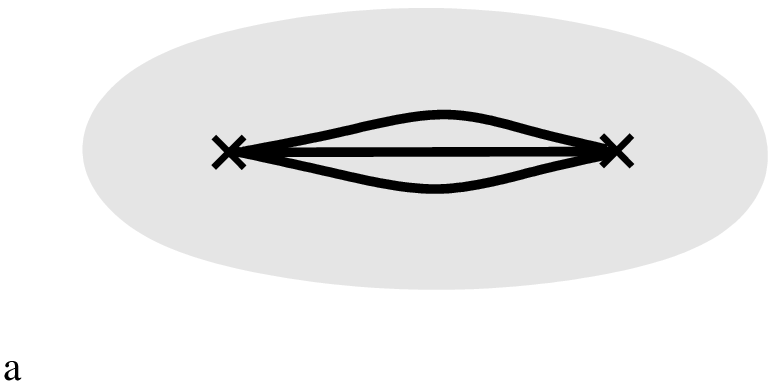,width=0.3\linewidth}\qquad\quad
\psfrag{c}{\small (b)}
\epsfig{file=./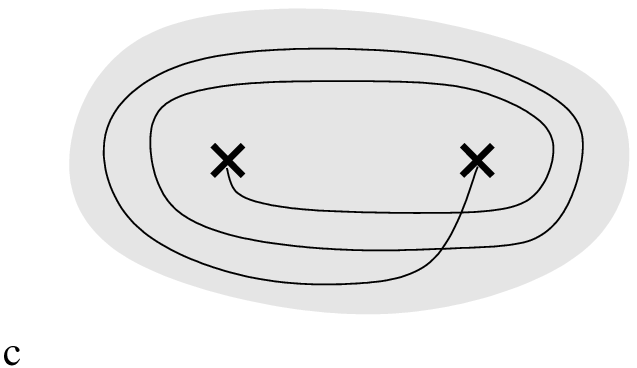,width=0.27\linewidth}\qquad\quad
\psfrag{b}{\small (c)}
\epsfig{file=./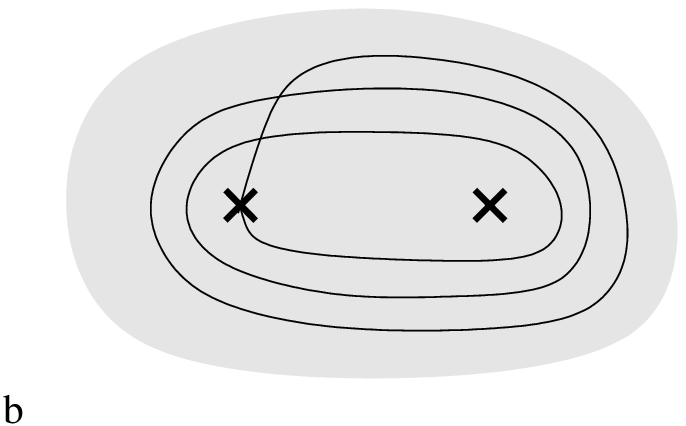,width=0.25\linewidth}\qquad
\caption{(a) A bangle $Bang_3(\gamma)$; (b) a bracelet $Brac_5(\gamma)$; (c) a bracelet $Brac_6(\gamma)$ for a semi-closed loop $\gamma$}
\label{b-b-semi-closed}
\end{center}
\end{figure}

The Laurent polynomials associated to bangles and bracelets can be computed via Definition~\ref{Lself}. One can easily check that the obtained formulae coincide with ones for surfaces (see~\cite{MSW2,T2}): if $\gamma$ is a semi-closed curve, then $x_{Bang_k(\gamma)}=(x_\gamma)^k$, $x_{Brac_k(\gamma)}=T_k(x_\gamma)$ and $x_{Band_k(\gamma)}=U_k(x_\gamma)$, where $T_k$ is the Chebyshev polynomial of the first kind defined by initial conditions $T_0(x)=2$, $T_1(x)=x$ and recurrent relation $T_k(x)=xT_{k-1}(x)-T_{k-2}(x)$, and $U_k$ is the Chebyshev polynomial of the second kind defined by initial conditions $U_0(x)=1$, $U_1(x)=x$ and recurrent relation $U_k(x)=xU_{k-1}(x)-U_{k-2}(x)$. 
\medskip

Following~\cite{MSW2}, we define $\mathcal C^\circ$- and $\mathcal C$-compatibility.

\begin{definition}
A finite collection $C$ of arcs, closed loops, pending arcs and semi-closed loops on $\h\O$  is $\mathcal C^\circ$-{\it compatible}
if no two elements of $C$ cross each other. The set of all  $\mathcal C^\circ$-compatible collections
in $\h\O$ is denoted by  $\mathcal C^\circ(\h\O)$.

A finite collection $C$ of arcs, pending arcs and bracelets is $\mathcal C$-{\it compatible} if
\begin{itemize}
\item no two curves intersect each other except for self-intersection of bracelets;

\item given a closed loop or a semi-closed loop $\gamma$, there is at most one $k\ge 1$ such that $k$-th bracelet
$Brac_k(\gamma)$ lies in $C$. Moreover, there is at most one copy of this bracelet  $Brac_k(\gamma)$ in $C$.

\end{itemize}

 The set of all  $\mathcal C$-compatible collections
in $\h\O$ is denoted by  $\mathcal C(\h\O)$.


\end{definition}

After we extended the definition of   $\mathcal C^\circ$- and $\mathcal C$-compatibility to the orbifold case, the definition of the bases 
$\B^\circ$ and $\B$ coincides with the one given in~\cite{MSW2} for the surface case:

\begin{definition}
Given a curve $\gamma$, let $x_\gamma$ be a Laurent polynomial defined in Section~\ref{Laurent poly}.
Then
$$
\B^\circ=\left\{\prod\limits_{\gamma\in C} x_\gamma \ | \ C\in \mathcal C^\circ(\h\O)  \right\}
$$
and 
$$
\B=\left\{\prod\limits_{\gamma\in C} x_\gamma \ | \ C\in \mathcal C(\h\O)  \right\}.
$$
$\B^\circ$ is called the {\it bangle basis}, and $\B$ is called the {\it bracelet basis}. 

Substituting bracelets by bands in the definition of $\mathcal C$-compatibility, we obtain the notion of $\mathcal C^\s$-compatibility and the set of $\mathcal C^\s(\h\O)$ of all  $\mathcal C^\s$-compatible collections. The {\it band basis} is then defined as 
$$
\B^{\s}=\left\{\prod\limits_{\gamma\in C} x_\gamma \ | \ C\in \mathcal C^{\s}(\h\O)  \right\}.
$$
\end{definition}

\subsection{Relations between $\B^\circ$, $\B^\s$ and $\B$}
\label{rels}

Similarly to the surface case, each element of the bracelet basis $\B$ is an integer  
linear combination of elements of $\B^\circ$.
More precisely, the only type of elements of $\B$ not contained in $\B^\circ$ is a bracelet and
$$x_{Brac_k(\gamma)}=T_k(x_\gamma),$$
where $T_k$ is a Chebyshev polynomial of the first kind (see above).
Note that $T_k(x)=x^k+\delta$, where $\delta$ is a sum of smaller powers of $x$,
which implies that $x^k$ is an integer linear combination of $T_i(x)$, $i\le k$.
In other words, each element of $\B^\circ$ is an integer linear combination of elements of $\B$.

Exactly the same relation holds between $\B^\circ$ and $\B^\s$: each element of the band basis $\B^\s$ is an integer linear combination of elements of $\B^\circ$, and each element of $\B^\circ$ is an integer linear combination of elements of $\B^\s$. The reason is exactly the same: a Chebyshev polynomial $U_k$ of the second kind has the form $U_k(x)=x^k+\delta'$, where $\delta'$ is a sum of smaller powers of $x$.  

We will prove that $\B^\circ$ is a basis for the cluster algebra $\A$, i.e. we will prove that

- elements of $\B^\circ$ belong to the cluster algebra (Lemma~\ref{in});

- elements of $\B^\circ$ span the cluster algebra (Lemma~\ref{span}); 

- elements of $\B^\circ$ are linearly independent (Theorem~\ref{independence}).

Then all the statements for the sets $\B$ and $\B^\s$ follow immediately from the fact that elements of $\B$ and $\B^\s$ are related to elements of $\B^\circ$ by a unitriangular integer linear transformation.

\section{Skein relations and elements of $\B^\circ$}
\label{reps}
In this section, we show that elements of $\B^\circ$ 
belong to cluster algebra and span it. We remind the reader that we consider cluster algebras originating from unpunctured orbifolds with at least two marked points at the boundary. 

\begin{lemma}
\label{in}
Elements of $\B^\circ$ 
belong to the cluster algebra.
\end{lemma}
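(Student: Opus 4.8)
The plan is to show that every element of $\B^\circ$ lies in $\A$ by reducing to the surface case through unfoldings. Recall that an element of $\B^\circ$ is a product $\prod_{\gamma\in C}x_\gamma$ over a $\mathcal C^\circ$-compatible collection $C$, where each $\gamma$ is an arc, pending arc, closed loop or semi-closed loop. Since the $x_\gamma$ for arcs and pending arcs are literally cluster variables of $\A$ (the Laurent expansions of the cluster variables attached to the corresponding arcs), these factors manifestly belong to $\A$. The content of the lemma is therefore to establish that $x_\gamma\in\A$ for the two ``loop'' types: closed loops and semi-closed loops.

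First I would treat a single closed loop $\gamma$. By Definition~\ref{def-for loops}, $x_\gamma=x_{\bar\gamma}|_{\h\O}$, where $S$ is an unfolding in which $\gamma$ lifts well (this unfolding exists by Lemma~\ref{lifts well}) and $\bar\gamma$ is a connected component of the lift. On the surface $S$, the loop $\bar\gamma$ is an ordinary closed loop, so by the results of~\cite{MSW2} the Laurent polynomial $x_{\bar\gamma}$ belongs to the cluster algebra $\A(S)$ of the surface. The key mechanism is the specialization map: a cluster $\bar{\mathbf x}$ on $S$ lifting a cluster $\mathbf x$ on $\h\O$ expresses $x_{\bar\gamma}$ as a Laurent polynomial in the variables of $\bar{\mathbf x}$, and under $|_{\h\O}$ each lifted cluster variable is sent to a cluster variable of $\A(\h\O)$ (Remark~\ref{spec}). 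Since $\A(\h\O)$ is a ring containing all these cluster variables, the specialized expression $x_{\bar\gamma}|_{\h\O}$ again lands in $\A(\h\O)=\A$. The semi-closed loop case is identical, using Definition~\ref{def-for semi} and an unfolding where $\gamma$ lifts to a closed loop.

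To assemble the full statement I would argue that the whole product $\prod_{\gamma\in C}x_\gamma$ lies in $\A$ because $\A$ is a ring closed under multiplication, and each factor has just been shown to be an element of $\A$. For the factors that are cluster variables this is immediate; for the loop factors it follows from the specialization argument above. One subtlety to address explicitly is that the various loops in $C$ need not all lift well in a single common unfolding, but this causes no difficulty: the membership $x_\gamma\in\A$ is established independently for each $\gamma$ using its own adapted unfolding, and independence of $x_\gamma$ from the choice of unfolding (guaranteed by the lambda-length interpretation in the remarks following Definitions~\ref{def-for semi} and~\ref{def-for loops}) makes the choices compatible.

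I expect the main obstacle to be verifying carefully that the specialization $x_{\bar\gamma}|_{\h\O}$ really produces an element of the ring $\A(\h\O)$, rather than merely a rational function: one must confirm that every cluster variable of $\bar{\mathbf x}$ specializes to a genuine cluster variable (or a cluster variable times a monomial in coefficients) of $\A(\h\O)$, so that the specialized Laurent polynomial is an honest algebraic combination of elements of $\A$. This is exactly the content encapsulated in Remark~\ref{spec}, and the argument amounts to invoking that remark together with the fact that $x_{\bar\gamma}\in\A(S)$ from~\cite{MSW2}; the remaining steps are formal ring-theoretic closure properties.
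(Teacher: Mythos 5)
There is a genuine gap in your argument, at the step ``Since $\A(\h\O)$ is a ring containing all these cluster variables, the specialized expression $x_{\bar \gamma}|_{\h\O}$ again lands in $\A(\h\O)$.'' What the specialization of Remark~\ref{spec} gives you is only this: $x_{\bar\gamma}$ is a Laurent polynomial in the variables of the lifted cluster $\bar{\mathbf x}$, and substituting variables sends it to a Laurent polynomial in the variables of $\mathbf x$. But a Laurent polynomial in the variables of one cluster is \emph{not} automatically an element of the cluster algebra: $\A$ contains the cluster variables, not their inverses (e.g. $1/x_1$ is a Laurent polynomial in $\mathbf x$ but in general lies outside $\A$). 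So ``formal ring-theoretic closure'' does not close the argument. The alternative route --- using that $x_{\bar\gamma}\in\A(S)$ means $x_{\bar\gamma}$ is a \emph{polynomial} in cluster variables of $\A(S)$ and specializing that expression --- also fails: the polynomial expression coming from~\cite{MSW2} involves arbitrary cluster variables of $\A(S)$, i.e.\ arbitrary arcs on $S$, and a generic arc on $S$ is not a component of the lift of any arc on $\h\O$. Remark~\ref{spec} says nothing about such variables, and at this stage of the paper there is no result asserting that their specializations lie in $\A(\h\O)$. In short, membership in $\A(S)$ does not descend through specialization without an equivariance argument that you have not supplied.

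The paper's proof avoids this trap by working intrinsically on the orbifold, which is exactly why the skein relations of Theorem~\ref{skein-list} and Lemma~\ref{skein-principal} were developed first. Following the method of~\cite[Proposition~4.5]{MSW2}, one exhibits a multicurve built out of arcs (see Fig.~\ref{semi-closed} for the semi-closed case, with two configurations depending on the number of boundary marked points) whose complete resolution contains, as one summand, the loop in question multiplied by boundary segments, while all other summands are unions of compatible arcs and pending arcs. Since the product of the cluster variables of the original multicurve lies in $\A$, the other resolution terms are products of cluster variables, and the coefficient monomials are invertible in $\Z\PP$, one can solve the skein identity for $x_\gamma$ and conclude $x_\gamma\in\A$; for a semi-closed loop one additionally identifies its Laurent polynomial with that of the closed loop around the two orbifold points, as both have the same geodesic representative (cf.\ Remark~\ref{skein-th}). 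If you want to keep your unfolding strategy, you would have to choose the auxiliary multicurve upstairs to be invariant under the covering transformations and check that its resolution descends (cf.\ Remark~\ref{factor of res}) --- which amounts to reproving the orbifold skein relations, i.e.\ to the paper's actual argument.
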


\begin{proof}
%
We need to consider closed and semi-closed loops only. For closed loops, we apply~\cite[Proposition~4.5]{MSW2} without any changes. For semi-closed loops we apply the same method as  in the proof of~\cite[Proposition~4.5]{MSW2},
but we use skein relations for the configuration of curves shown in   Fig.~\ref{semi-closed} on the left
for the case when at least one boundary component of $\h\O$ contains two or more marked points, and the configuration of curves shown in 
 Fig.~\ref{semi-closed} on the right otherwise (one can see that  Fig.~\ref{semi-closed} is a counterpart
of~\cite[Figure 11]{MSW2} and~\cite[Figure 13]{MSW2}). After the resolution of all intersections, one of the summands will be a product of the Laurent polynomial associated to a closed loop around two orbifold points and boundary segments. Note that the former is equal to the Laurent polynomial associated to the semi-closed loop (as they have the same geodesic representative, cf. Remark~\ref{skein-th} and Fig.~\ref{deform}(b)).

\end{proof}

\begin{figure}[!h]
\begin{center}
\epsfig{file=./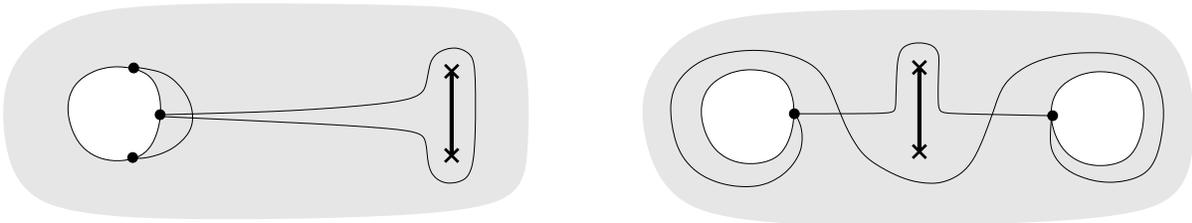,width=0.99\linewidth}
\caption{A multicurve whose resolution produces a product of a semi-closed loop and boundary segments}
\label{semi-closed}
\end{center}
\end{figure}

\begin{lemma}
\label{span}
$\B^\circ$ is a spanning set for the cluster algebra.

\end{lemma}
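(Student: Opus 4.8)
The plan is to reduce an arbitrary element of $\A$ to a $\mathbb{Z}\P$-linear combination of bangle-basis elements by repeatedly applying the skein relations established in Section~\ref{skein}. Since $\A$ is generated over its coefficient ring by the cluster variables, and every cluster variable equals $x_\gamma$ for an arc or pending arc $\gamma$, the algebra $\A$ is spanned over $\mathbb{Z}\P$ by the products $x_{\gamma_1}\cdots x_{\gamma_k}$, where each $\gamma_i$ is an arc or pending arc. By Definition~\ref{Lself} each such product equals $x_C$ for the multicurve $C=\gamma_1\cup\dots\cup\gamma_k$, so it suffices to show that every $x_C$ lies in the $\mathbb{Z}\P$-span of $\B^\circ$.

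First I would invoke the termination lemma proved above: for any multicurve $C$ there is a sequence of resolutions turning $C$ into a formal sum of crossing-free multicurves, and each resolution strictly decreases the weighted count of intersections. Combining this with the principal-coefficient skein relation (Lemma~\ref{skein-principal}) and its iteration, one obtains an identity $x_C=\sum_s M_s\, x_{C_s}$, where the $C_s$ are multicurves with neither self- nor mutual intersections and each $M_s$ is a monomial in the variables $y_i$. The induction is carried out on the weighted intersection count, with the crossing-free multicurves as the base case.

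It then remains to identify each crossing-free $x_{C_s}$ with an element of $\B^\circ$. A multicurve with no crossings whose components are arcs, pending arcs, closed loops and semi-closed loops is exactly a $\mathcal C^\circ$-compatible collection; the non-intersection conditions, including the rule that two pending arcs may not share an orbifold endpoint, become automatic once all crossings are removed. Contractible components and components bounding a disc with a single orbifold point are replaced by the scalars prescribed in Definition~\ref{const}, so they contribute only integer constants. Hence each $x_{C_s}$ is, up to an integer scalar, a product $\prod_{\gamma\in C_s}x_\gamma$ over a $\mathcal C^\circ$-compatible collection, i.e. an element of $\B^\circ$. Therefore $x_C$, and hence every element of $\A$, lies in the $\mathbb{Z}\P$-span of $\B^\circ$.

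The main obstacle is bookkeeping in the reduction rather than any single hard estimate: one must ensure that the iterated resolution really terminates, and that every curve produced along the way belongs to one of the admissible families (arc, pending arc, closed loop, semi-closed loop) or is one of the exceptional contractible configurations handled by Definition~\ref{const}. Both points are guaranteed by the skein machinery of Section~\ref{skein} --- the weighted intersection count of the termination lemma settles the former, while Table~\ref{skein-table} together with Definition~\ref{const} enumerates all possible outputs for the latter --- so the argument is a direct transcription of the surface proof of~\cite{MSW2} once these orbifold-specific resolutions are in place.
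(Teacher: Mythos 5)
Your proposal is correct and follows essentially the same route as the paper: reduce to products of cluster variables viewed as multicurves, apply the complete resolution via the skein relations (with termination guaranteed by the weighted intersection count), and observe that the resulting crossing-free multicurves are exactly $\mathcal C^\circ$-compatible collections, with contractible components absorbed as integer scalars. Your treatment is somewhat more explicit about coefficients and the exceptional contractible configurations, but the argument is the paper's.
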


\begin{proof}
We need to show that every product of cluster variables can be represented as a sum of elements of $\B^\circ$.

Suppose that we have a collection of arcs and pending arcs $C$. 
Consider a complete resolution $R(C)$ of all intersection points of $C$. We get a formal sum of multicurves, each consisting of mutually non-intersecting
non-self-intersecting curves (i.e. we get a formal sum of collections of non-intersecting arcs, pending arcs, closed loops and semi-closed loops, and thus a formal sum of $\mathcal C^\circ$-compatible sets). 
Hence, $x_C=x_{R(C)}$ is expressed as a sum of elements of $\B^\circ$. 

\end{proof}

\section{Linear independence of $\B^\circ$}
\label{ind}

In this section, we show that the set $\B^\circ$ is linearly independent. Our proof follows the plan of the proof from~\cite{MSW2}. First, we show that a counterpart of~\cite[Theorem~5.1]{MSW2} holds (Lemma~\ref{5.1}), so we can make use of the notion of $\g$-vectors of elements of $\B^\circ$. Then we use tropical duality~\cite{NZ} and results of~\cite{MSW2} to associate $\g$-vectors of $\B^\circ$ to certain laminations on the ``reversed'' associated orbifold $\h\O^*$ (see Definition~\ref{reversed}), which, in view of the results of~\cite{FeSTu3}, implies bijection between $\g$-vectors of $\B^\circ$  and elements of $\Z^n$ (Theorem~\ref{g-bijection}). The application of~\cite[Proposition~2.13]{MSW2} will complete the proof.   

\begin{lemma}[cf.~\cite{MSW2}, Theorem 5.1]
\label{5.1}
Any element of $\B^\circ$ contains a unique term $\x^{\g}$ not divisible by
any coefficient variable, and the exponent vector of each other term is obtained from
$\g$ by adding a non-negative linear combination of columns of the extended exchange matrix $\t B$. 

\end{lemma}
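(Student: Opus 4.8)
The plan is to deduce both assertions from three properties of the $\Z^n$-grading on $\A_\bullet(\h\O)$: that each element of $\B^\circ$ is homogeneous, that its Laurent expansion is polynomial with non-negative exponents in the coefficient variables, and that it has exactly one coefficient-free term. Granting these, the first assertion is immediate, and for the second I would write an arbitrary term as $\x^{\mathbf a}\y^{\mathbf c}$ with $\mathbf c\ge\mathbf 0$ and use homogeneity to see that its full exponent vector equals $(\g,\mathbf 0)+\sum_j c_j\,\t B_j$, where $\t B_j$ is the $j$-th column of $\t B$; since $c_j\ge 0$ this is exactly the required statement. So the whole content is to establish homogeneity, non-negativity, and the uniqueness of the coefficient-free term.

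First I would reduce to a single curve. An element of $\B^\circ$ is a product $x_C=\prod_{\gamma\in C}x_\gamma$ over a $\mathcal C^\circ$-compatible collection, where each $\gamma$ is an arc, a pending arc, a closed loop or a semi-closed loop. Since the grading makes $\A_\bullet(\h\O)$ a graded ring and coefficient-variable exponents are non-negative and additive under multiplication, a term of $x_C$ is coefficient-free exactly when it is a product of coefficient-free terms of the factors. Hence if every factor $x_\gamma$ is homogeneous with a unique coefficient-free term $\x^{\g(\gamma)}$, then $x_C$ is homogeneous and its only coefficient-free term is $\x^{\sum_\gamma\g(\gamma)}$, with no cancellation possible. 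Thus it suffices to treat a single curve of each of the four types. For an arc or a pending arc, which is a cluster variable of $\A_\bullet(\h\O)$, all three properties are supplied directly by~\cite{FZ4}: homogeneity, non-negativity of the $y$-exponents, and the constant term $1$ of the associated $F$-polynomial, which yields the unique coefficient-free term $\x^{\g}$.

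The remaining and genuinely new case is a closed or semi-closed loop $\gamma$, where I would argue through an unfolding. By Lemma~\ref{lifts well} there is an unfolding $S$ in which $\gamma$ lifts well; choosing a component $\bar\gamma$ of its lift, we have $x_\gamma=x_{\bar\gamma}|_{\h\O}$ (Remark~\ref{specification}). Upstairs $\bar\gamma$ is a closed loop on the surface $S$, so~\cite[Theorem~5.1]{MSW2} gives that $x_{\bar\gamma}$ is homogeneous for the grading of $\A_\bullet(S)$ and has a unique coefficient-free term. I would then transport this down by specialization. The main obstacle is to check that specialization respects both the grading and the coefficient structure: the unfolding should induce a linear folding map $\Z^{n_S}\to\Z^n$ intertwining the two gradings and carrying the columns of $\t B_S$ to those of $\t B$, and it should send each coefficient variable of $S$ to a monomial in the coefficient variables of $\h\O$ with positive exponents. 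Supplying this compatibility is exactly what the unfolding theory of~\cite{FeSTu2,FeSTu3} together with tropical duality~\cite{NZ} are designed to provide. Once it is in place, homogeneity of $x_\gamma$ follows from linearity, and coefficient-freeness is preserved in both directions, so the unique coefficient-free term upstairs maps to the unique coefficient-free term $\x^{\g}$ downstairs, occurring with coefficient $+1$. The delicate point to verify is precisely that no term carrying coefficient variables upstairs can collapse to a coefficient-free term downstairs, which is guaranteed by the positivity of the exponents in the coefficient-variable substitution.
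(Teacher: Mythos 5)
Your proof is correct and follows essentially the same route as the paper: both deduce the statement from~\cite[Theorem~5.1]{MSW2} by lifting to an unfolding and specializing variables, the paper's (much terser) proof simply citing the surface result together with the definition of the unfolding of the exchange matrix. Your curve-by-curve reduction and explicit check that specialization respects the grading and the coefficient structure are details the paper leaves implicit; in fact the factorization into single curves neatly sidesteps the need for a single unfolding in which all closed curves of the collection lift well simultaneously.
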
 

\begin{proof}
The proof follows from~\cite[Theorem~5.1]{MSW2}. Let $S$ be an unfolding of $\h\O$. As we have already mentioned, every element of $\B^\circ$ can be obtained from a corresponding element of $\B^\circ(S)$ by a specialization of variables, where $\B^\circ(S)$ is the bangle basis for the surface cluster algebra $\mathcal A(S)$. This immediately implies the existence of the leading term (i.e., the term $\x^{\g}$) and defines $\g$-vectors for all elements of $\B^\circ$. The second statement of the lemma follows from the surface version and the definition of the unfolding of exchange matrix (see e.g.~\cite{FeSTu2}).

\end{proof}

\begin{definition}
A {\it $\g$-vector} of an element $x_C$ of $\B^\circ$ is the multidegree of its leading term $\x^\g$.  
\end{definition}

\subsection*{Laminations and $\g$-vectors}
\label{g-lam}

First, we introduce {\it reversed} associated orbifold $\h\O^*$ and its triangulation $T^*$.

\begin{definition}
\label{reversed}
Let $\h\O$ be an associated orbifold with triangulation $T$. The {\it reversed} associated orbifold $\h\O^*$ is obtained from $\h\O$ in the following way: replace all orbifold points by special marked points, and all the special marked points by orbifold ones. To obtain the corresponding triangulation $T^*$ from $T$ all pending arcs should be replaced by double ones, and the double arcs should be replaced by pending ones. 

\end{definition}

\begin{remark}
\label{reversed-transposed}
If a triangulation $T$ on an associated orbifold  $\h\O$ is defined by a skew-symmetrizable matrix $B$, then the triangulation $T^*$ on $\h\O^*$ is defined by $-B^T$.    

\end{remark}

We will also need a notion of {\it reversed elementary lamination} $L_i^*$ on reversed associated orbifold $\h\O^*$.

\begin{definition}
\label{reversed-lam}
Given a triangulation $T$ of $\h\O$, a reversed elementary lamination $L_i^*$ is a lamination on $\h\O^*$ with shear coordinates $b_{T^*}(L_i^*)=(0,\dots,0,-1,0,\dots,0)$, where $-1$ is located on $i$-th place. Geometrically, $L_i^*$ is a ``reflection'' of the elementary lamination $L_i$ (w.r.t. $T*$) in the $i$th arc of $T*$. 

\end{definition}

According to~\cite[(1.13)]{NZ}, 
there is a duality between $\cc$-vectors and $\g$-vectors of cluster algebras which can be expressed in the following terms:
$$G_t^{B;t_0}=(C_{t_0}^{B_t^T;t})^T,$$
where $G_t^{B;t_0}$ is the matrix composed of $\g$-vectors of a seed $(t,B_t)$ in the initial seed $(t_0,B)$, and $C_{t_0}^{B_t^T;t}$ is the matrix composed of $\cc$-vectors of a cluster $t_0$ in the initial cluster $(t,B_t^T)$ (see Section~\ref{principal} for definitions). This duality holds in the assumptions of sign-coherence of $\cc$-vectors, which is true for cluster algebras from orbifolds~\cite[Theorem~14.1]{FeSTu3} (sign-coherence of $\cc$-vectors in full generality was recently proved in~\cite{GHKK}).

Since the rows of $C$-matrix are shear coordinates of elementary laminations (see~\cite[Theorem~9.1]{FeSTu3}), and the (negative) transposed matrix $B^T$ corresponds to the reversed associated orbifold (see Remark~\ref{reversed-transposed}), the duality can be reformulated in the following way.

\begin{lemma}
\label{g-arcs}
Let $T_o$ be a triangulation of an associated orbifold $\h\O$ corresponding to the initial cluster, and let $x_{\gamma}$ be an arbitrary cluster variable (i.e., $\gamma$ is some  arc on $\h\O$). Let $T$ be any triangulation of $\h\O$ containing $\gamma$, and let $L_{\gamma}$ be the corresponding elementary lamination with respect to $T$. Then 
$$\g(x_{\gamma})=-b_{T_0^*}(L_{\gamma}^*)$$  

\end{lemma}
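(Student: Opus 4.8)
The plan is to unwind the Nakanishi--Zelevinsky duality recalled above and translate each term into shear coordinates on the reversed orbifold $\h\O^*$.

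First I would fix notation: let $t$ be the seed whose triangulation is $T$ and let $t_0$ be the initial seed with triangulation $T_0$, so that $\g(x_\gamma)$ is the column of $G_t^{B;t_0}$ indexed by $\gamma$. Applying the identity $G_t^{B;t_0}=(C_{t_0}^{B_t^T;t})^T$ of~\cite{NZ} identifies this column with the $\gamma$-th row of the $\cc$-matrix $C_{t_0}^{B_t^T;t}$, whose initial seed sits at $t$ but carries the \emph{transposed} exchange matrix $B_t^T$. This is the whole point of invoking tropical duality: it turns a statement about $\g$-vectors into a statement about $\cc$-vectors, for which a geometric model is already available.

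Next I would give this $\cc$-matrix its geometric meaning. By~\cite[Theorem~9.1]{FeSTu3}, the rows of a $\cc$-matrix are the shear coordinates of the elementary laminations of the initial triangulation, read off in the target triangulation. Thus the $\gamma$-th row records the shear coordinates, taken in $T_0$, of the elementary lamination attached to $\gamma$ for the model realizing $B_t^T$. The identification of that model is supplied by Remark~\ref{reversed-transposed}: since $B_t$ realizes $T$ on $\h\O$, the matrix $-B_t^T$ realizes $T^*$ on the reversed orbifold $\h\O^*$. Hence $B_t^T$ is realized by $\h\O^*$ equipped with the opposite orientation, and passing between the two orientations negates every shear coordinate, exactly the reflection built into Definition~\ref{reversed-lam}. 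Under this dictionary the elementary lamination of $\gamma$ for $B_t^T$, which has shear vector $+\mathbf e_\gamma$ in $T$, is precisely the reversed elementary lamination $L_\gamma^*$, whose shear vector is $-\mathbf e_\gamma$ in $T^*$. Reading the shear coordinates of $L_\gamma^*$ in $T_0^*$ on $\h\O^*$ rather than in $T_0$ on the opposite-orientation model introduces a single global sign, and assembling the chain of identifications gives $\g(x_\gamma)=-b_{T_0^*}(L_\gamma^*)$.

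The main obstacle is the sign and orientation bookkeeping of the previous paragraph: one must verify simultaneously that the geometric interpretation of $\cc$-vectors remains valid once the initial matrix is transposed to $B_t^T$, that $B_t^T$ (and not merely $-B_t^T$) is correctly modelled by $\h\O^*$ with reversed orientation, and that the reflection encoded in $L_\gamma^*$ combines with this orientation reversal to produce exactly the single minus sign of the statement rather than cancelling or doubling it. Checking this at the level of a single crossing of a lamination with an arc, using the $\pm1$ rule of Figure~\ref{lam} together with the convention that crossings of thick curves count twice, is where the genuine content lies; the rest of the argument is a formal assembly of quoted results.
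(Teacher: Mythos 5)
Your proposal is correct and follows essentially the same route as the paper: the paper derives the lemma exactly by combining the Nakanishi--Zelevinsky duality $G_t^{B;t_0}=(C_{t_0}^{B_t^T;t})^T$ with the identification of $\cc$-matrix rows as shear coordinates of elementary laminations (\cite[Theorem~9.1]{FeSTu3}) and Remark~\ref{reversed-transposed} relating $-B^T$ to the triangulation $T^*$ of $\h\O^*$, which is precisely your chain of identifications. Your extra care with the orientation reversal and the single sign in Definition~\ref{reversed-lam} is a faithful unpacking of what the paper leaves implicit in calling the lemma a ``reformulation'' of the duality.
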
     

\begin{remark}
Note that Lemma~\ref{g-arcs} for arcs on unpunctured surfaces can also be deduced from the formula for $\g$-vectors given in~\cite[Corollary~6.15(1)]{MSW2}. 
\end{remark}

Comparing Lemma~\ref{g-arcs} with~\cite[Corollary~6.15]{MSW2}, we immediately obtain similar expression for closed loops on unpunctured surfaces.

\begin{lemma}
\label{g-loops-s}
Let $S$ be an unpunctured marked surface, and let $T_0$ be a triangulation of $S$ corresponding to the initial cluster. Let $x_{\gamma}$ be the element corresponding to a closed loop $\gamma$ on $S$. Then  $$\g(x_{\gamma})=-b_{T_0^*}({\gamma})$$

\end{lemma}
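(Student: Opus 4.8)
The plan is to obtain the statement by confronting the two available descriptions of the $\g$-vector: the one coming from tropical duality (Lemma~\ref{g-arcs}) and the explicit one of Musiker, Schiffler and Williams \cite[Corollary~6.15]{MSW2}. The content of \cite[Corollary~6.15]{MSW2} is that it computes $\g(x_\gamma)$ directly from the shear coordinates of a lamination attached to $\gamma$, and that it does so in parallel for arcs (part~(1)) and for closed loops (part~(2)). The whole task is to translate that computation into the reversed-orbifold language of Lemma~\ref{g-arcs}, after which the loop case reads off with no extra work.

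First I would record the simplification special to surfaces. Since an unpunctured marked surface $S$ carries neither orbifold points nor special marked points, passing to the reversed associated orbifold of Definition~\ref{reversed} changes nothing: $S^*=S$ and $T_0^*=T_0$, so $b_{T_0^*}(\cdot)=b_{T_0}(\cdot)$ on $S$. Moreover a closed loop $\gamma$ is itself an integral unbounded lamination consisting of a single closed curve, so its shear coordinates $b_{T_0^*}(\gamma)$ are well defined and the right-hand side of the asserted formula makes sense.

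Next I would calibrate the dictionary between the two formulas on the arc case, where both are available. For an arc $\gamma$, Lemma~\ref{g-arcs} gives $\g(x_\gamma)=-b_{T_0^*}(L_\gamma^*)$, while \cite[Corollary~6.15(1)]{MSW2} expresses the same $\g$-vector through shear coordinates on $S$ (this is exactly the coincidence recorded in the remark following Lemma~\ref{g-arcs}). Matching the two identifies the MSW2 shear-coordinate recipe with the reversed elementary lamination $L_\gamma^*$ on $\h\O^*=S$. Since \cite[Corollary~6.15]{MSW2} treats closed loops by the identical recipe, but with the loop $\gamma$ itself playing the role of the lamination rather than an elementary lamination associated to it, the very same translation yields $\g(x_\gamma)=-b_{T_0^*}(\gamma)$ for a closed loop $\gamma$. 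The reason a loop needs no elementary-lamination substitute is structural: a loop has no endpoints, meets no marked points, and therefore requires none of the endpoint shift used to turn an arc into a leaf of a lamination, so it feeds into the MSW2 formula unmodified.

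I expect the only genuinely delicate point to be the bookkeeping of orientations and signs: one must verify that the clockwise/counter-clockwise conventions built into the shear coordinates and into the MSW2 $\g$-vector formula are matched consistently under the reversal $T_0\mapsto T_0^*$, so that the single overall sign in $\g(x_\gamma)=-b_{T_0^*}(\gamma)$ comes out correctly. But this sign is fixed once and for all by the arc comparison via the already-established Lemma~\ref{g-arcs}; no further convention enters when $\gamma$ is a loop, which is why the passage from arcs to closed loops is immediate.
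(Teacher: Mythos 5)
Your proposal is correct and follows essentially the same route as the paper: the paper's proof is precisely the comparison of Lemma~\ref{g-arcs} (the tropical-duality formula, calibrated on arcs) with \cite[Corollary~6.15]{MSW2}, whose parts treat arcs and closed loops by one uniform shear-coordinate recipe, so the loop case is read off immediately. Your added observations --- that reversal is trivial on an unpunctured surface ($S^*=S$, $T_0^*=T_0$) and that the sign convention is pinned down once by the arc case --- are exactly the implicit content of the paper's one-line argument (the paper's alternative band-graph argument appears only as a separate remark).
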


Here the closed loop $\gamma$ is understood as a lamination consisting of a single curve. 

\begin{remark}
\label{g-loops-band}
There is another way to prove Lemma~\ref{g-loops-s} based on investigation of the band graph of closed loop (see~\cite{MW,MSW2}). More precisely, given a loop $\gamma$ on $S$, one can find an arc $\t\gamma$ on $S$ which is very close to $\gamma$, see Fig.~\ref{g-loops-f}. It is easy to see that the vector $b_{T_0}({\gamma})$ of shear coordinates of $\gamma$ can be obtained from the vector $b_{T_0}(L_{\t\gamma}^*)$ of shear coordinates of the reversed elementary lamination of $\t\gamma$ by adding the vector $e_{\gamma_i}+e_{\gamma_j}-e_{\gamma_k}$. On the other hand, comparing the band graph of $\gamma$ and the snake graph of $\t\gamma$, one can easily see that the monomials without coefficients (which correspond to the leading terms, and thus to $\g$-vectors) differ exactly by $x_{\gamma_k}/x_{\gamma_i}x_{\gamma_j}$.

Notice that Fig.~\ref{g-loops-f} represents an easy case when the curve $\t\gamma$ intersects two
arcs only (namely, $\gamma_i$ and $\gamma_k$) incident to its basepoint. In more general setting one uses   
the notion of fan introduced in~\cite[Section~6.1]{MSW2}; this keeps the situation as simple as in the initial case and leads to exactly the same result.

The approach above seems to be suitable for generalizations to punctured case. For example, it allows an immediate generalization to punctured surfaces in the case of $T_0$ coming from ideal triangulation and having a conjugate pair in every puncture.       
\end{remark}

\begin{figure}[!h]
\begin{center}
\psfrag{g}{\scriptsize $\gamma$}
\psfrag{g1}{\scriptsize $\t\gamma$}
\psfrag{g2}{\scriptsize $\gamma_i$}
\psfrag{g3}{\scriptsize $\gamma_k$}
\psfrag{g4}{\scriptsize $\gamma_j$}
\epsfig{file=./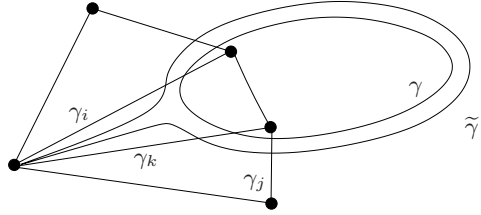,width=0.39\linewidth}
\caption{The arc $\t\gamma$ is very close to the closed loop $\gamma$ (with respect to the triangulation $T_0$)}
\label{g-loops-f}
\end{center}
\end{figure}

Since both $\g$-vectors and $\cc$-vectors on orbifolds can be obtained via specialization of initial variables, and $\cc$-vectors are exactly shear coordinates of elementary laminations, Lemma~\ref{g-loops-s} gives rise to a similar statement for closed loops on associated orbifolds without punctures (and special marked points). 

\begin{lemma}
\label{g-loops}
Let $\h\O$ be an associated orbifold without punctures and special marked points, and let $T_0$ be a triangulation of $\h\O$ corresponding to the initial cluster. Let $x_{\gamma}$ be the element corresponding to a closed loop $\gamma$ on $\h\O$. Then  $$\g(x_{\gamma})=-b_{T_0^*}({\gamma})$$

\end{lemma}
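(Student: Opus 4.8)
The plan is to reduce the orbifold statement to its surface counterpart, Lemma~\ref{g-loops-s}, via an unfolding in which $\gamma$ lifts well, and then to observe that both sides of the desired identity arise from the corresponding surface quantities under one and the same folding (specialization) map. First I would invoke Lemma~\ref{lifts well} to pick an unfolding $S$ of $\h\O$ in which the closed curve $\gamma$ lifts well; let $\bar\gamma$ be one connected component of its complete lift, which by Definition~\ref{def well} is a single closed loop on $S$, and let $\bar T_0$ be the lift of the initial triangulation $T_0$. Since $S$ is an unpunctured surface with neither orbifold nor special marked points, its reversal is trivial, so $\bar T_0^*=\bar T_0$ and Lemma~\ref{g-loops-s} applies directly to give $\g(x_{\bar\gamma})=-b_{\bar T_0}(\bar\gamma)$, with $\bar\gamma$ regarded as a one-curve lamination on $S$.

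Next I would fold both sides down, respectively, to $\h\O$ and to $\h\O^*$. On the $\g$-vector side, $x_\gamma=x_{\bar\gamma}|_{\h\O}$ by Remark~\ref{specification}, so by Lemma~\ref{5.1} the leading term of $x_\gamma$ is the specialization of the leading term of $x_{\bar\gamma}$; hence $\g(x_\gamma)=\pi(\g(x_{\bar\gamma}))$, where $\pi\colon\Z^{\bar n}\to\Z^n$ is the projection summing the grading over each fiber of arcs of the unfolding. On the shear-coordinate side, every transverse crossing of $\gamma$ with an arc of $T_0^*$ lifts to the crossings of $\bar\gamma$ with the lifts of that arc on $S$, and the convention that an intersection with a thick curve counts twice (stated after Lemma~\ref{skein-principal}) matches exactly the ramification of the covering over an orbifold point; this yields $b_{T_0^*}(\gamma)=\pi'(b_{\bar T_0}(\bar\gamma))$ for the analogous fibrewise projection $\pi'$. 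Combining these with the surface identity gives $\g(x_\gamma)=\pi(-b_{\bar T_0}(\bar\gamma))=-\pi'(b_{\bar T_0}(\bar\gamma))=-b_{T_0^*}(\gamma)$, as required, provided $\pi=\pi'$.

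The main obstacle, and the crux of the argument, is precisely this identification $\pi=\pi'$ together with the fibrewise compatibility of shear coordinates. The key point is that the skew-symmetric matrix $\bar B$ of $(S,\bar T_0)$ is simultaneously an unfolding of $B$ and of $-B^T$: since $\bar B^T=-\bar B$ one has $-\bar B^T=\bar B$, and by Remark~\ref{reversed-transposed} the matrix $-B^T$ is exactly the one governing the reversed triangulation $T_0^*$ of $\h\O^*$. Thus the single covering $S\to\h\O$ realizes the folding of $(S,\bar T_0)$ both onto $(\h\O,T_0)$, which controls $\g$-vectors, and onto $(\h\O^*,T_0^*)$, which controls shear coordinates, so the two projections coincide.

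Making this last step rigorous requires checking the folding-compatibility of shear coordinates for a well-lifting loop arc by arc, distinguishing ordinary arcs, which have two lifts and no doubling, from pending arcs, which have a single lift whose crossing is counted twice. This is where the geometric bookkeeping of Definition~\ref{def well} and the \emph{count twice} convention must be reconciled with the combinatorics of the unfolding; once that is done the chain of equalities above closes and the lemma follows.
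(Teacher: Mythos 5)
Your skeleton --- unfold via Lemma~\ref{lifts well}, apply the surface statement Lemma~\ref{g-loops-s} to a component $\bar\gamma$ of the lift, then fold both sides down --- is the same reduction the paper makes, and your treatment of the $\g$-vector side (the leading term of $x_\gamma=x_{\bar\gamma}|_{\h\O}$ is the specialization of the leading term of $x_{\bar\gamma}$, so $\g(x_\gamma)$ is the fibrewise sum of $\g(x_{\bar\gamma})$, by Lemma~\ref{5.1} and Remark~\ref{specification}) is correct. The gap is in what you yourself identify as the crux: the claim that $\bar B$ is \emph{simultaneously} an unfolding of $B$ and of $-B^T$ because $-\bar B^T=\bar B$. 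This is false. Being an unfolding is not a property of the skew-symmetric matrix alone; it requires the block-sum conditions, and if the block sums of $\bar B$ over a fiber give $b_{ij}$, then they do not give $(-B^T)_{ij}=-b_{ji}=b_{ij}d_j/d_i$ unless $d_i=d_j$, i.e.\ unless $B$ is essentially skew-symmetric already. The paper's own examples in Section~\ref{atomic} give a concrete counterexample: $A_{2n-1}$ is an unfolding of $C_n$, whereas $-B^T$ for $C_n$ is the matrix of type $B_n$, whose unfolding is $D_{n+1}$. In the present situation the same phenomenon occurs: the branched cover $S$ of $\h\O$ is an unpunctured surface, while the cluster-theoretic unfolding of $\h\O^*$ (whose special marked points are weight-$2$ orbifold points) is a \emph{punctured} surface in which each double arc un-identifies into a conjugate pair of tagged arcs --- a different surface with a different covering-type relation. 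So the assertion that ``the single covering $S\to\h\O$ realizes both foldings,'' on which your identification of the two projections rests, is not available.

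What is true --- and is the route the paper actually takes --- is that the shear-coordinate side folds for \emph{algebraic} reasons: by~\cite[Theorem~9.1]{FeSTu3} the $\cc$-vectors of the orbifold algebra are exactly the shear coordinates of elementary laminations, and $\cc$-vectors, just like $\g$-vectors, are obtained from the unfolding by specialization of initial variables; combining this with the tropical duality behind Lemma~\ref{g-arcs} and with Lemma~\ref{g-loops-s} is what makes the surface statement descend to $\h\O$. If you insist on your geometric identity $b_{T_0^*}(\gamma)=\pi'\bigl(b_{\bar T_0}(\bar\gamma)\bigr)$, it must be proved directly, arc by arc, using the conventions of~\cite{FeSTu3} for shear coordinates at double arcs (where a possible factor of $2$ is exactly the issue); note also that the ``counts twice'' convention you cite (stated after Lemma~\ref{skein-principal}) concerns intersection numbers of laminations with \emph{thick curves} in skein relations, not shear coordinates of a closed loop with respect to arcs of $T_0^*$, so it cannot be invoked for this purpose. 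As written, the central step of your argument is justified by a false statement, and the fallback verification you mention is left unexecuted.
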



\begin{lemma}
\label{g-surjection}
The map $\g:\B^\circ\to\Z^n$ assigning to an element of $\B^\circ$ its $\g$-vector is surjective.

\end{lemma}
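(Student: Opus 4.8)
The plan is to derive surjectivity from the bijection of Theorem~\ref{ex-uniq}, applied to the reversed orbifold $\h\O^*$, together with additivity of $\g$-vectors. First I would record that $\g$ is additive on $\B^\circ$: by Lemma~\ref{5.1} each $x_\gamma$ has a unique coefficient-free leading term of multidegree $\g(x_\gamma)$ and every other term differs from it by a non-negative combination of columns of $\t B$, so $x_\gamma$ is homogeneous for the $\Z^n$-grading of Section~\ref{principal}; hence for any $\mathcal C^\circ$-compatible collection $C$ we get $\g(x_C)=\sum_{\gamma\in C}\g(x_\gamma)$. It therefore suffices to realise an arbitrary $w\in\Z^n$ as such a sum.

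Next, given $w$, I would apply Theorem~\ref{ex-uniq} to $\h\O^*$ (which is again an associated orbifold, with no punctures or orbifold points, cf.\ Definition~\ref{reversed}) equipped with the triangulation $T_0^*$: the shear-coordinate map $L\mapsto b_{T_0^*}(L)$ is a bijection from laminations on $\h\O^*$ to $\Z^n$, so there is a unique lamination $L$ with $b_{T_0^*}(L)=-w$. I would then decompose $L$ into its connected components and turn each component $\ell$ into a curve on $\h\O$ whose $\g$-vector is $-b_{T_0^*}(\ell)$. A non-closed component necessarily ends at unmarked boundary points (there being no punctures or orbifold points on $\h\O^*$), and I claim it is the reversed elementary lamination $L_\gamma^*$ of a unique arc or pending arc $\gamma$ of $\h\O$, obtained by un-shifting the endpoints of $\ell$ back to the nearest marked or special point; then $\g(x_\gamma)=-b_{T_0^*}(L_\gamma^*)=-b_{T_0^*}(\ell)$ by Lemma~\ref{g-arcs}. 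A closed component is a closed loop $\gamma$ on $\h\O$, and $\g(x_\gamma)=-b_{T_0^*}(\gamma)$ by Lemma~\ref{g-loops}; if $\ell$ encircles two special marked points, this $\gamma$ is a loop around two orbifold points, whose Laurent polynomial (hence $\g$-vector) agrees with that of the corresponding semi-closed loop (cf.\ Remark~\ref{skein-th} and Definition~\ref{const}). Collecting these curves gives a family $C$, and since the components of $L$ are pairwise disjoint and the passage $\ell\mapsto\gamma$ is a small isotopy, $C$ is $\mathcal C^\circ$-compatible; by additivity $\g(x_C)=-\sum_\ell b_{T_0^*}(\ell)=-b_{T_0^*}(L)=w$, proving surjectivity.

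The main obstacle will be the geometric correspondence used in the decomposition step: I must show that every connected component of an arbitrary lamination on $\h\O^*$ is accounted for, i.e.\ that each non-closed simple curve ending on the boundary is exactly the reversed elementary lamination $L_\gamma^*$ of a well-defined arc or pending arc of $\h\O$ (the un-shifting procedure being well defined, injective, and landing in the set of admissible arcs), and that each admissible closed curve on $\h\O^*$ corresponds to an admissible closed or semi-closed loop on $\h\O$. The forbidden configurations on the two sides match up: a curve bounding a disc with a single special marked point is excluded from laminations on $\h\O^*$, matching the fact that a loop around one orbifold point is not an admissible closed curve on $\h\O$ (Definition~\ref{const}), so I expect the correspondence to be a bijection on components, which also preserves disjointness and hence compatibility. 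Verifying this matching carefully, especially for pending arcs and for loops around two orbifold points, is the crux of the argument.
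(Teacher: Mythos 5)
Your proposal is correct and follows essentially the same route as the paper: both apply the lamination--$\Z^n$ bijection of Theorem~\ref{ex-uniq} (via~\cite{FeSTu3}) to $\h\O^*$ with shear coordinates $-w$, decompose the resulting lamination into components, convert boundary-to-boundary components into (reversed elementary laminations of) arcs, closed components into closed loops, and loops around two special marked points into semi-closed loops, and then invoke Lemmas~\ref{g-arcs} and~\ref{g-loops} together with additivity of $\g$-vectors on products. The only difference is one of emphasis: you make explicit the additivity of $\g$ and the component-matching that the paper dismisses as ``easy to see''.
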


\begin{proof}
Given  any integer $n$-vector $b=(b_1,\dots,b_n)$, we find an element of $\B^\circ$ with $\g$-vector $b$.

Choose a triangulation $T_0$ of $\h\O$.  As it is shown in~\cite{FeSTu3}, there is a unique lamination $L'$ on $\h\O^*$ with shear coordinates $-b$ (with respect to $T_0^*)$. As it is easy to see, every single curve of any lamination is either closed (or semi-closed) loop, or a reversed elementary lamination $L_{\gamma}^*$ for some arc $\gamma$ on $\h\O$ (to obtain such $\gamma$ just shift every boundary end of every curve clockwise till the closest boundary marked point).

Now construct a non-intersecting collection $C$ of curves on $\h\O$. It will consist of closed loops of $L'$ (with semi-closed loops substituting loops around two special marked points in $L'$), and all those arcs $\gamma$ whose reversed elementary lamination $L_{\gamma}^*$ is contained in $L'$. By Lemmas~\ref{g-arcs} and~\ref{g-loops}, the element of $\B^\circ$ corresponding to the union of curves of $C$ has $\g$-vector $b$. 

\end{proof}

\begin{theorem}   
\label{g-bijection}
The map $\g:\B^\circ\to\Z^n$ assigning to an element of $\B^\circ$ its $\g$-vector is bijective. 

\end{theorem}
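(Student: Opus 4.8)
The plan is to prove bijectivity of $\g:\B^\circ\to\Z^n$ by combining the surjectivity already established in Lemma~\ref{g-surjection} with injectivity, and then to observe that injectivity follows almost for free from the surjectivity together with the combinatorial structure of $\B^\circ$. The key conceptual point is that the constructions appearing in the proof of Lemma~\ref{g-surjection} are \emph{reversible}: the passage from a $\mathcal C^\circ$-compatible collection $C$ to its $\g$-vector is, via Lemmas~\ref{g-arcs} and~\ref{g-loops}, nothing but the passage from $C$ to the shear coordinates (negated) of an associated lamination $L'$ on the reversed orbifold $\h\O^*$.

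First I would make precise the correspondence $C\mapsto L'$. Given a $\mathcal C^\circ$-compatible collection $C$ on $\h\O$, I associate to it the lamination on $\h\O^*$ consisting of the reversed elementary laminations $L_\gamma^*$ for each arc (or pending arc) $\gamma\in C$, together with the closed and semi-closed loops of $C$ themselves (viewed on $\h\O^*$, with loops around two special marked points reinterpreted as semi-closed loops, exactly as in Lemma~\ref{g-surjection}). By Lemmas~\ref{g-arcs} and~\ref{g-loops} the $\g$-vector of $x_C$ equals $-b_{T_0^*}(L')$, the negated shear-coordinate vector of this lamination. The crucial structural fact, which I would invoke from~\cite{FeSTu3}, is Theorem~\ref{ex-uniq}: the shear-coordinate map is a \emph{bijection} between laminations on $\h\O^*$ and $\Z^n$.

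Next I would verify that the assignment $C\mapsto L'$ is itself injective on $\mathcal C^\circ$-compatible collections, i.e.\ that distinct compatible collections yield distinct laminations on $\h\O^*$. This is the step I expect to require the most care, since it is where the geometry of the reversed orbifold enters rather than mere formalism. The point is that the three types of constituents of $L'$ occupy recognizably different positions in a lamination: an arc $\gamma\in C$ contributes the lamination $L_\gamma^*$, whose ends lie near boundary marked points and can be recovered by shifting boundary ends clockwise to the nearest marked point (as described in Lemma~\ref{g-surjection}), whereas closed and semi-closed loops of $C$ contribute closed/semi-closed curves with no boundary ends. Thus from $L'$ one reads off unambiguously which curves are of arc-type and which are loop-type, and within each type the original curve of $C$ is determined. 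Since $\mathcal C^\circ$-compatibility forbids crossings, the bookkeeping involves no multiplicities to disentangle, and the recovery of $C$ from $L'$ is well defined.

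Finally I would assemble the argument: the composite $C\mapsto L'\mapsto -b_{T_0^*}(L')=\g(x_C)$ is a composition of an injection (by the previous paragraph) with the bijection of Theorem~\ref{ex-uniq} followed by negation, hence injective on $\mathcal C^\circ$-compatible collections; since distinct elements of $\B^\circ$ correspond to distinct collections $C\in\mathcal C^\circ(\h\O)$, the map $\g$ is injective. Combined with the surjectivity of Lemma~\ref{g-surjection}, this yields bijectivity. In fact, the cleanest route is to note that $\g$ factors as the composition of the bijection $C\leftrightarrow L'$ (laminations on $\h\O^*$ being in bijection with $\mathcal C^\circ$-collections via the above recovery procedure) with the bijection of Theorem~\ref{ex-uniq}; surjectivity was already the hard direction, so the remaining content is exactly the reversibility established above.
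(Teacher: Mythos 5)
Your proposal is correct and follows essentially the same route as the paper: surjectivity from Lemma~\ref{g-surjection}, and injectivity by passing from a $\mathcal C^\circ$-compatible collection $C$ to the lamination $L_C'$ on $\h\O^*$, identifying $\g(x_C)$ with $-b_{T_0^*}(L_C')$ via Lemmas~\ref{g-arcs} and~\ref{g-loops}, and invoking the injectivity of shear coordinates (Theorem~\ref{ex-uniq}, i.e.\ \cite[Lemma~6.6]{FeSTu3}). The only difference is that you spell out the recovery of $C$ from $L_C'$, a step the paper asserts without detail, which is a welcome addition rather than a departure.
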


\begin{proof}
Consider  $\g:\B^\circ\to\Z^n$ first. We need to show that an element of $\B^\circ$ with $\g$-vector $b$ is unique, the surjectivity follows from Lemma~\ref{g-surjection}. Given an element $x_C$ of $\B^\circ$, we can define a lamination $\t L_C'$ on $\h\O^*$ in the following way. We keep all the closed (or semi-closed) curves of $C$ (with semi-closed curves substituted by loops around two special marked points), and take a reversed elementary lamination $L_{\gamma}^*$ for every $\gamma\in C$.

The considerations above show that the vector of shear coordinates of the lamination $L_C'$ with respect to any triangulation $T_0^*$ is equal to negative $g$-vector of $x_C\in\B^\circ$ with respect to $T_0$. Moreover, two distinct elements of $\B^\circ$ lead to two different laminations $L_C'$. According to~\cite[Lemma~6.6]{FeSTu3}, distinct laminations have distinct shear coordinates, which implies that two distinct elements of $\B^\circ$ have distinct $\g$-vectors.

\end{proof}

\begin{theorem}
\label{independence}
The set $\B^\circ$ is linearly independent.

\end{theorem}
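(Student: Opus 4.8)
The plan is to deduce linear independence of $\B^\circ$ directly from the bijectivity of the $\g$-vector map established in Theorem~\ref{g-bijection}, exactly as in the surface case of~\cite{MSW2}. The key structural input is Lemma~\ref{5.1}: every element $x_C\in\B^\circ$ has a distinguished leading term $\x^{\g(x_C)}$ which is not divisible by any coefficient variable, and all remaining terms are obtained from $\x^{\g}$ by adding non-negative combinations of columns of the extended exchange matrix $\t B$, i.e.\ they are strictly divisible by at least one coefficient variable $y_i$. Thus the leading monomial records the $\g$-vector unambiguously, and by Theorem~\ref{g-bijection} distinct elements of $\B^\circ$ carry distinct $\g$-vectors, hence distinct leading monomials.

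The argument itself is then a standard triangularity/leading-term separation. First I would suppose, for contradiction, that some finite non-trivial linear relation $\sum_{j} c_j\, x_{C_j}=0$ holds among distinct elements $x_{C_1},\dots,x_{C_r}$ of $\B^\circ$ with $c_j\neq 0$. Since the $C_j$ are distinct, Theorem~\ref{g-bijection} guarantees the $\g$-vectors $\g(x_{C_j})$ are pairwise distinct, so the leading monomials $\x^{\g(x_{C_j})}$ are pairwise distinct Laurent monomials. Now I would single out a $\g$-vector that is maximal (or simply extremal) among $\{\g(x_{C_j})\}$ in the partial order induced by divisibility by coefficient variables: concretely, choose an index $j_0$ whose leading term $\x^{\g(x_{C_{j_0}})}$ cannot appear as a non-leading (coefficient-divisible) term of any other $x_{C_j}$. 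The precise tool here is~\cite[Proposition~2.13]{MSW2}, invoked at the end of the introduction to this section, which packages exactly this extraction: a family of Laurent polynomials each possessing a leading term of the type described in Lemma~\ref{5.1}, with pairwise distinct leading terms, is automatically linearly independent.

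Concretely, the route I would follow is: (1) cite Lemma~\ref{5.1} to obtain for each $x_C\in\B^\circ$ the leading term $\x^{\g(x_C)}$ together with the coefficient-divisibility description of the non-leading terms; (2) cite Theorem~\ref{g-bijection} so that the map $C\mapsto\g(x_C)$ on $\B^\circ$ is injective, giving pairwise distinct leading monomials; (3) apply~\cite[Proposition~2.13]{MSW2} to the finite subfamily appearing in any putative linear relation to conclude that no non-trivial relation exists. This is precisely the specialization of the $\B^\circ(S)$ argument of~\cite{MSW2} to the orbifold setting, and since both the leading-term structure (Lemma~\ref{5.1}) and the $\g$-vector injectivity (Theorem~\ref{g-bijection}) have already been transported to orbifolds, no further geometric input is needed.

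I expect the only genuine subtlety — and hence the main point to verify carefully rather than the main obstacle, since the heavy lifting is already done — is the applicability of~\cite[Proposition~2.13]{MSW2} verbatim in the orbifold case. One must check that the leading-term hypotheses of that proposition match the conclusion of Lemma~\ref{5.1} on the nose (same $\Z^n$-grading via $\deg(x_i)=\mathbf e_i$ and $\deg(y_j)=-\mathbf b_j^0$, same meaning of ``not divisible by a coefficient variable''), which is exactly how $\g$-vectors were defined here. Because Proposition~2.13 is a purely formal statement about Laurent polynomials with distinguished leading terms and makes no reference to the underlying surface or orbifold, it transfers without modification once injectivity of $\g$ is in hand, so the proof reduces to assembling these two ingredients.
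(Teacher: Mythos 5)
Your proposal is correct and follows essentially the same route as the paper: the paper's proof likewise consists of invoking Lemma~\ref{5.1} for the leading-term structure, Theorem~\ref{g-bijection} for injectivity of the $\g$-vector map, and then running the argument of~\cite{MSW2} (Corollary~6.14 via Proposition~2.13) verbatim with these orbifold counterparts substituted in. No gaps; your closing remark about Proposition~2.13 being a purely formal statement about Laurent polynomials, hence transferring without modification, is exactly the point the paper relies on.
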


The proof follows {\it verbatim} the proof of~\cite[Corollary~6.14]{MSW2} after a substitution of~\cite[Theorems~5.1 and 6.13]{MSW2} by their orbifold counterparts,  Lemma~\ref{5.1} and Theorem~\ref{g-bijection}.  In view of Section~\ref{rels}, this completes the proof of Theorem~\ref{bases-thm}.

\section{Positivity of $\B$}
\label{pos}

\begin{definition}
An additive basis $\mathbf b=\{x_i\}$ of a $\Z$-algebra is called {\it positive} if it has positive {\it structure constants},
i.e. for any $x_j,x_k\in \mathbf b$ one has $x_jx_k=\sum_i n_{jk}x_i$ with $n_{jk}\ge 0$.  
\end{definition}

It was conjectured in~\cite{FG0} that both bases $\B^\circ$ and $\B$ for cluster algebras from surfaces are positive.
However, it was demonstrated in~\cite[Exercise~20.5, Lecture~18]{T}
that for the bangle basis $\B^\circ$ the statement fails already in an annulus.
More precisely, let $p$ and $q$ be marked points in different boundary components of the annulus and let $\gamma_k$ be an arc connecting $p$ to $q$ and wrapping $k$ times, then  it is easy to check that positivity fails for
$\gamma_0\gamma_5$. 

In~\cite{T2} D.~Thurston proved positivity for the bracelet basis of the skein algebra of a surface (with punctures), which, in particular, implies positivity of the bracelet basis $\B$ on an unpunctured surface. As a corollary, we get similar statement for an unpunctured orbifold.

\begin{theorem}
\label{positive}
The bracelet basis $\B$ is positive.

\end{theorem}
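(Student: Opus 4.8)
The plan is to deduce positivity of the bracelet basis $\B$ on an unpunctured orbifold $\h\O$ from the corresponding positivity result on surfaces, which is already available from~\cite{T2}, via the unfolding machinery developed in Sections~\ref{unf}--\ref{skein}. The key structural fact is that positivity of a basis is a statement about its structure constants: for two bracelet-basis elements $x_{C}$ and $x_{C'}$ we must show that the product $x_C\,x_{C'}$, when re-expanded in the basis $\B$, has non-negative integer coefficients. Since every element of $\B$ is a product $\prod_{\gamma\in C}x_\gamma$ over a $\mathcal C$-compatible collection, it suffices to analyze how a product of two such collections resolves back into $\mathcal C$-compatible collections using the skein relations of Theorem~\ref{skein-list} and Lemma~\ref{skein-principal}.

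First I would fix an unfolding $S$ of $\h\O$ in which all relevant curves lift well, which exists by Lemma~\ref{lifts well}; more precisely, given the finitely many curves appearing in $C$ and $C'$ I would choose a single unfolding in which each of them lifts well. By Remark~\ref{specification} we have $x_\gamma=x_{\bar\gamma}|_{\h\O}$ for every such curve, and products specialize to products, so the product $x_C\,x_{C'}$ is the specialization of the corresponding product of lifted bracelet-basis elements on $S$. The essential point is that the skein-relation computation that resolves intersections on $\h\O$ is compatible with the covering (Remark~\ref{factor of res}): resolving on $S$ and then projecting gives exactly the orbifold resolution. Thus the expansion of $x_C\,x_{C'}$ into $\mathcal C$-compatible collections on $\h\O$ is the image under specialization of the analogous expansion on $S$.

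Next I would invoke the surface positivity theorem of~\cite{T2}, which guarantees that on $S$ the expansion of a product of two bracelet-basis elements into the surface bracelet basis $\B(S)$ has non-negative coefficients. Specializing variables (which are monomial substitutions sending cluster variables on $S$ to cluster variables on $\h\O$, and the $y_i$ on $S$ to $y_i$ on $\h\O$) preserves non-negativity of coefficients, since a non-negative integer combination of positive Laurent monomials remains such after a monomial substitution. The only subtlety is that a single orbifold bracelet may correspond under specialization to a combination of surface bracelets rather than a single one, so I must check that the specialized expansion, when rewritten in terms of the orbifold bracelet basis $\B$, still has non-negative structure constants; here I would use the Chebyshev relations of Section~\ref{rels} together with the fact that the change of basis between bangles and bracelets is unitriangular, so that expressing everything consistently in the $\mathcal C$-compatible bracelet basis keeps signs under control.

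The hard part will be ensuring that the specialization step does not introduce cancellations with negative coefficients: on the surface the positivity is phrased in terms of the surface bracelet basis, and one must verify that the image of a surface $\mathcal C$-compatible collection under the covering projection is (a non-negative combination of) orbifold $\mathcal C$-compatible collections, rather than something that must be re-resolved with possible sign changes. Concretely, the delicate cases are those where a resolved configuration on $S$ projects to a contractible loop around an orbifold point (which is set to $0$) or to a semi-closed loop carrying a factor of $\pm 2$, as recorded in Table~\ref{skein-table} and Definition~\ref{const}. I would handle these by showing that the problematic multiples are exactly the ones already accounted for in~\cite{T2}'s surface argument, so that the signs match up; since the orbifold skein relations were designed (Remark~\ref{skein-th}) to be the projection of the surface ones, the positivity is inherited and no new negative contributions can appear.

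\begin{proof}
It follows from~\cite{T2}.
\end{proof}
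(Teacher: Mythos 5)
Your proposal takes a genuinely different route from the paper, and it has two genuine gaps.

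First, your opening step --- ``choose a single unfolding in which each of the finitely many curves of $C$ and $C'$ lifts well'' --- is not supported by anything in the paper. Lemma~\ref{lifts well} produces an unfolding adapted to \emph{one} closed curve, and its construction (choosing cuts that cross that particular curve an even number of times) does not obviously extend to several closed curves simultaneously. Indeed, the paper's own proof of the skein relations is structured precisely to avoid this assumption: the case of two closed curves (Lemma~\ref{c}) is handled by an auxiliary-arc and cancellation argument rather than by passing to a common unfolding, which strongly suggests that no such common unfolding is available in general.

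Second, and more fundamentally, positivity of \emph{structure constants} does not pass through specialization the way you claim. Writing $x_C x_{C'}=\bigl(x_{\bar C}x_{\bar C'}\bigr)|_{\h\O}$ and expanding on the surface via~\cite{T2} gives $x_C x_{C'}=\sum_i n_i\, x_{\bar D_i}|_{\h\O}$ with $n_i\ge 0$, but the surface multicurves $\bar D_i$ produced by resolving crossings connect different sheets of the covering and need \emph{not} be lifts of orbifold curves; their projections to $\h\O$ can be self-intersecting, and $x_{\bar D_i}|_{\h\O}$ is then merely a positive Laurent polynomial, not a priori a non-negative combination of elements of $\B$. Concluding non-negativity of the coefficients in the $\B$-expansion from positivity of Laurent coefficients is essentially the (much stronger, here unproved) atomicity property. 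Your fallback --- that the unitriangular bangle/bracelet change of basis ``keeps signs under control'' --- is exactly wrong in spirit: the bangle basis $\B^\circ$ is unitriangularly related to $\B$ and yet fails to be positive, as the paper itself recalls. The paper avoids all of this by not using unfoldings at all for Theorem~\ref{positive}: it deforms thick curves into thin ones (Remark~\ref{skein-th}), replaces orbifold points by punctures, applies D.~Thurston's ordering argument for punctured-surface skein algebras (resolve crossings so that contractible loops never arise), restores the orbifold points, and disposes of the residual constant $-2$ from the last line of Table~\ref{skein-table} via the identity $R_p(\gamma)=Brac_2(\gamma_1)+Brac_2(\gamma_2)+2$ of Remark~\ref{noneg}. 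Finally, note that your formal proof environment contains only the sentence ``It follows from~\cite{T2}'', which is not a proof of either your outline or the theorem.
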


The idea of proof in~\cite{T2} is to show that one can always resolve the crossings of a multicurve in such an order that negative terms (i.e. contractible loops) never arise.

We proceed as follows:
\begin{itemize}
\item[1)] use small deformations as in Fig.~\ref{deform} to turn all thick curves into thin ones;
\item[2)] substitute all orbifold points by punctures, so that we obtain a multicurve in a surface with punctures;
\item[3)] resolve the intersections in such an order that no contractible loops will be obtained at any step;
\item[4)] put the orbifold points back on their places;
\item[5)] if needed, deform the closed loops around two orbifold points to semi-closed curves and arcs around one orbifold point to 
pending arcs.

\end{itemize}

The closed contractible curves will never arise, so that we obtain non-negative coefficients at each summand (see Remark~\ref{noneg}).

\begin{remark}
\label{noneg}
In the last line of Table~\ref{skein-table} defining the resolution one can find
 $R(C)=C_++C_--2$. This does not prevent the bracelet basis from being positive. Indeed, the multicurve $C$ in this case contains a non-contractible thick curve (call it $\gamma$) with two ends at the same orbifold point $p$. Denote by $\gamma_1$ and $\gamma_2$ closed loops homotopic to the curves $\gamma_+$ and $\gamma_-$ (see Table~\ref{skein-table}). Then the skein relation becomes
$$R_p(\gamma)=Brac_2(\gamma_1)+Brac_2(\gamma_2)+2,$$
so it also has non-negative coefficients. 

\end{remark}

\section{Atomic bases for $C_n^{(1)}$ and finite type}
\label{atomic}

\begin{definition}
An element of a cluster algebra $\A$ is {\it positive} if its Laurent expansion in every cluster has non-negative coefficients only, denote the set of positive elements by $\A^+$. An additive basis $\mathbf b=\{x_i\}$ is {\it atomic} if $z\in\A^+$ if and only if $z$ can be written as a linear combination of $\{x_i\}$ with non-negative coefficients.  

\end{definition}

Cerulli Irelli~\cite{C} proved that cluster monomials form an atomic basis of skew-symmetric cluster algebras of finite type. In~\cite{DT}, Dupont and Thomas showed that the basis of a cluster algebra $\t A_{p,q}$ constructed by Dupont in~\cite{D} is atomic. They also gave a similar proof for the $A_n$ case. Their atomic basis for $\t A_{p,q}$ coincides with the bracelet basis ($\t A_{p,q}$ is represented by an annulus with $p$ and $q$ marked points on the boundaries). The cluster monomial basis of $A_n$ is also a particular case of the bracelet basis. In~\cite{GM}, Gunawan and Musiker give a combinatorial proof of the fact the cluster monomial basis of the algebra of type $D_n$ is atomic.

Note that cluster algebras of classical finite and affine types originate from surfaces and orbifolds: $A_{n}$ can be realized as a disc with $n+3$ boundary marked points, $B_n$ as a disc with one special marked point and $n+1$ boundary marked points, $C_n$ as a disc with one orbifold point and $n+1$ boundary marked points, $D_{n}$ as a once punctured disc with $n$ boundary marked points, $\t A_{p,q}$ as an annulus with $p$ and $q$ marked points on the boundary components, and $C_n^{(1)}$ as a disc with two orbifold points and $n$ boundary marked points. In particular, $A_{2n-1}$ is an unfolding of $C_n$, $D_{n+1}$ is an unfolding of $B_n$, and $\t A_{n,n}$ is an unfolding of $C_n^{(1)}$. The aim of this section is to use unfoldings to adapt the proof from~\cite{DT} to the orbifold case. Furthermore, it was shown in~\cite{FeSTu2} that a cluster algebra of type $F_4$ also can be unfolded to a cluster algebra of type $E_7$. As an application of our methods, we extend the result of~\cite{C} to the skew-symmetrizable case.   

It was shown in~\cite{MSW,MSW2} that cluster variables $x_{\gamma}$ and bracelets $Brac_k(\gamma)$ on an unpunctured surface are positive. Using unfolding, we can see that cluster variables and bracelets on an unpunctured orbifold are also positive (cluster variables on any orbifold are positive by~\cite[Theorem 13.1]{FeSTu3}). This immediately implies that all elements of $\B$ are positive, and thus any linear combination of elements of $\B$ with non-negative coefficients is positive.

The goal of this section is to prove the following theorems.

\begin{theorem}
\label{atomF}
Cluster monomial bases of skew-symmetrizable cluster algebras of finite type are atomic.

\end{theorem}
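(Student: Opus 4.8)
The plan is to reduce the non-simply-laced types to the skew-symmetric case, where atomicity of the cluster monomial basis is due to Cerulli Irelli~\cite{C}, using the unfoldings of Section~\ref{unf}. By the Fomin--Zelevinsky classification of finite type it suffices to argue type by type. The simply-laced types $A_n,D_n,E_6,E_7,E_8$ are skew-symmetric and are covered directly by~\cite{C}, and the rank two types, in particular $G_2$, are covered by~\cite{SZ}; so the remaining cases are $B_n$, $C_n$ and $F_4$. For each of these Section~\ref{atomic} fixes an unfolding to a skew-symmetric finite type, namely $B_n\to D_{n+1}$, $C_n\to A_{2n-1}$ and $F_4\to E_7$~\cite{FeSTu2}; write $\pi\colon\A(\t X)\to\A(X)$ for the corresponding specialization and $G$ for its deck group. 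Atomicity amounts to two inclusions, and I would establish each separately.

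The inclusion ``every non-negative combination of cluster monomials is positive'' is the easy one: cluster variables are positive by~\cite[Theorem~13.1]{FeSTu3}, and a product of Laurent polynomials with non-negative coefficients again has non-negative coefficients, so each cluster monomial lies in $\A^+$. I would also record the two facts about $\pi$ supplied by Section~\ref{unf}: it sends each cover cluster variable $x_{\bar\gamma}$ to the base cluster variable $x_\gamma$ (Remark~\ref{spec}), is compatible with mutation, and hence maps $\A(\t X)^+$ into $\A(X)^+$; and that in finite type the cluster monomial basis has non-negative structure constants, so that any product of base cluster variables is a non-negative combination of cluster monomials (for instance via the coincidence with the theta basis of~\cite{GHKK}).

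For the reverse inclusion I would transfer positivity across $\pi$. Given $z\in\A(X)^+$, the aim is to produce a lift $\t z\in\A(\t X)^+$ with $\pi(\t z)=z$. Once such a lift exists, atomicity of $\A(\t X)$ from~\cite{C} gives $\t z=\sum_{\t m}\t c_{\t m}\,\t m$ with all $\t c_{\t m}\ge 0$; applying $\pi$ yields $z=\sum_{\t m}\t c_{\t m}\,\pi(\t m)$, and since each $\pi(\t m)$ is a product of base cluster variables it is, by the positivity of structure constants recorded above, a non-negative combination of cluster monomials. Summing with the non-negative weights $\t c_{\t m}$ exhibits $z$ as a non-negative combination of cluster monomials; as the cluster monomials form a basis~\cite{CK}, this is the unique cluster monomial expansion of $z$, so all of its coefficients are non-negative. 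To construct the lift I would use the finite-type $\g$-vector fan together with the tropical duality of~\cite{NZ} reinterpreted geometrically through~\cite{FeSTu3} (the finite type counterpart of Theorem~\ref{g-bijection}): the positive Laurent data of $z$ in the base clusters, which are exactly the projections of the $G$-invariant cover clusters, determines a $G$-invariant Laurent element $\t z$ upstairs, and the compatibility of the unfolding with mutation identifies its expansions with those of $z$.

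The step I expect to be the main obstacle is precisely the existence of a positive lift $\t z$. Positivity of $z$ is given only in the base clusters, and these correspond to the $G$-invariant cover clusters; but $\A(\t X)$ also possesses clusters that are not $G$-invariant and therefore are not projections of base clusters, so the positivity of $\t z$ in those clusters is not automatic and must be derived. Bridging this gap is where the unfolding machinery of Section~\ref{unf} is indispensable: the compatibility of lifts with mutation, the ``lifts well'' mechanism of Lemma~\ref{lifts well}, and the geometric description of $\g$-vectors via shear coordinates on the reversed orbifold together force the two positivity conditions to agree, and it is here that the finiteness of the type (completeness of the $\g$-vector fan and absence of essential closed loops) is essential. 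This is the orbifold analogue of the argument of~\cite{DT}, whose surface computations are replaced throughout by their unfolded, skew-symmetric counterparts.
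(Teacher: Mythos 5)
Your strategy --- lift an arbitrary positive element $z\in\A(X)^+$ to the unfolding, apply Cerulli Irelli's atomicity there, and push back down --- is not the paper's argument, and it has a gap at its core that you identify but do not close: the existence of a positive lift $\t z\in\A(\t X)^+$. Positivity of $z$ is given only in the base clusters, which correspond to the $G$-invariant clusters of the cover; nothing you cite controls the expansion of $\t z$ in the non-invariant clusters (which exist in all three cases, e.g.\ non-centrally-symmetric triangulations of the polygon for $C_n\subset A_{2n-1}$). The tools you invoke --- mutation-compatibility of lifts, Lemma~\ref{lifts well}, the $\g$-vector fan --- all concern $G$-invariant data and say nothing about those clusters, so the assertion that ``the two positivity conditions agree'' is not a proof; it is a claim at least as strong as the theorem itself, and the paper never proves or uses it. Your push-down step has a further unjustified claim: that $\pi(\t m)$ is a product of base cluster variables for every cover cluster monomial $\t m$. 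This requires every cluster variable of the cover to specialize to a base cluster variable, which you do not establish; for the $F_4\subset E_7$ unfolding you cite it would fail simply by counting ($E_7$ has $70$ cluster variables, while at most $2\cdot 28=56$ can arise as lifts of $F_4$ variables), and without it the re-expansion of $\pi(\t m)$ in the base cluster monomial basis needs precisely the kind of positivity statement you are trying to prove, so the argument risks circularity. The appeal to structure-constant positivity via the theta basis of~\cite{GHKK} is likewise a heavy external input that the paper's proof does not require.

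The paper's proof runs in the opposite direction and never lifts a general positive element. As in~\cite{CL,DT}, atomicity reduces to a statement about cluster monomials only: if $x_C$ is a cluster monomial in a cluster $\x$ and $x_{C_0}$ is a cluster monomial having a factor not compatible with $C$, then the $\x$-expansion of $x_{C_0}$ contains no term equal to $x_C$ (the skew-symmetrizable counterpart of Lemma~\ref{dta}); this forces every coefficient in the basis expansion of a positive element to be non-negative. This statement is proved by contradiction. One lifts $x_{C_0}$ to a cover cluster monomial $\t x_{C_0}^1$; the offending term lifts to a term of the form $\prod_{j\in I}\t x_j^{r_j}\prod_{i\notin I}\t x_{i,s_i}^{k_i}/\t x_{i,t_i}^{m_i}$ with $k_i\ge m_i\ge 0$, because it must become an honest monomial after identifying the two lifts of each variable. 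The key idea --- the one missing from your proposal --- is the symmetrization trick: multiply $\t x_{C_0}^1$ by its image $\t x_{C_0}^2$ under the deck transformation (its expansion is obtained by swapping $\t x_{i,1}\leftrightarrow\t x_{i,2}$, e.g.\ via cluster automorphisms~\cite{Law}). These two lifts are compatible, so their product is again a cluster monomial of $\t\A$, and the product of the two distinguished terms has all exponents non-negative since $k_i\ge m_i$. Thus a cluster monomial of the skew-symmetric algebra $\t\A$ has a Laurent term that is not a proper Laurent monomial, contradicting~\cite{CKLP} (Lemma~\ref{plmp}). This converts the information about specializations into information upstairs where the skew-symmetric theory applies, while only ever moving cluster monomials --- not arbitrary positive elements --- across the unfolding.
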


\begin{theorem}
\label{atom}
The bracelet basis on the disc with two orbifold points and $n$ marked points is the atomic basis of the cluster algebra of affine type $C_n^{(1)}$.

\end{theorem}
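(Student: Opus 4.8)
The plan is to prove the two inclusions defining atomicity separately. One inclusion is immediate from what precedes: by Theorem~\ref{bases-thm} the bracelet basis $\B$ of the disc with two orbifold points is a basis of the cluster algebra of type $C_n^{(1)}$, and every element of $\B$ is positive (as recalled at the start of this section, cluster variables and bracelets on an unpunctured orbifold are positive by unfolding). Since $\A^+$ is closed under non-negative linear combinations, every non-negative combination of elements of $\B$ lies in $\A^+$. It remains to prove the converse: if $z\in\A^+$ and $z=\sum_i a_i b_i$ is the unique expansion of $z$ in the basis $\B$, then $a_i\ge 0$ for all $i$.

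For the converse I would transfer the problem to the annulus $\t A_{n,n}$, which is an unfolding of $C_n^{(1)}$, and invoke the atomicity of the bracelet basis of $\t A_{n,n}$ proved by Dupont and Thomas in~\cite{DT}. The specialization map $f\mapsto f|_{\h\O}$ of Remark~\ref{spec} is an algebra homomorphism from $\A(\t A_{n,n})$ to $\A(C_n^{(1)})$; since every cluster of $C_n^{(1)}$ lifts to a cluster of $\t A_{n,n}$ and the Laurent expansion downstairs is the specialization of the one upstairs, and since specializing (which only identifies variables) can never turn a non-negative coefficient negative, this map sends $\A^+(\t A_{n,n})$ into $\A^+(C_n^{(1)})$. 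Moreover it matches the two bracelet bases: closed loops around the core and their bracelets descend between the annulus and the orbifold, and the Chebyshev relations $x_{Brac_k(\gamma)}=T_k(x_\gamma)$ and $x_{Band_k(\gamma)}=U_k(x_\gamma)$ are identical on both sides. The strategy is then, for the given positive $z$, to produce a positive lift $\bar z\in\A^+(\t A_{n,n})$ with $\bar z|_{\h\O}=z$; applying~\cite{DT} to expand $\bar z$ in $\B(\t A_{n,n})$ with non-negative coefficients and specializing would yield the desired non-negative expansion of $z$ in $\B(C_n^{(1)})$.

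The main obstacle is precisely the existence of such a positive lift: positivity of $\bar z$ must be verified in \emph{every} cluster of $\t A_{n,n}$, whereas the clusters obtained by lifting triangulations of $C_n^{(1)}$ are only the $\ZZ/2$-symmetric ones, so positivity of $z$ alone does not immediately control $\bar z$ in the non-symmetric clusters. I would therefore follow the combinatorial core of~\cite{DT} rather than treat the lift as a black box: fix the cluster coming from a triangulation of the annulus, write out the Chebyshev Laurent expansions of the bracelets of the core loop, and run Dupont--Thomas' extremal-coefficient argument—examining the coefficient of a suitably chosen boundary (most-skewed) monomial—to force each $a_i\ge 0$. The only genuinely new feature on the orbifold side is the presence of \emph{semi-closed} loops and the even/odd split of their bracelets (Fig.~\ref{b-b-semi-closed}); these are exactly the curves that become ordinary closed loops and their bracelets on $\t A_{n,n}$, so their Laurent expansions, and hence their behaviour under the extremal argument, are governed by the same Chebyshev polynomials and are controlled by the unfolding. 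Once the extremal argument is checked to respect this correspondence, non-negativity of all coefficients follows, completing the proof.
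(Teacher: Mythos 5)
Your first direction and your diagnosis of the obstacle to the naive lifting strategy are both correct: one cannot lift a positive element of $\A(C_n^{(1)})$ to a provably positive element of $\A(\t A_{n,n})$, since positivity upstairs would have to be checked in the non-symmetric clusters as well. But your fallback does not repair this; it restates the goal. The entire difficulty in ``running the Dupont--Thomas extremal-coefficient argument'' on the orbifold is to prove the orbifold counterpart of Lemma~\ref{dtat}: that for a basis element $x_C$ and a triangulation $T$ of the \emph{orbifold} containing the arcs of $C$ (or the twisted sequence $T_r$ when $C$ contains a bracelet of the semi-closed loop), no other basis element $x_{C_0}$ has a term of its $T$-expansion coinciding with the monomial $x_C$ (resp.\ with the special term $x^{\u C,r}$). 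This does not follow from saying the expansions ``are controlled by the unfolding'': the orbifold expansion is a \emph{specialization} of the annulus expansion, and specialization destroys properness --- a term such as $\t x_{i,1}^{2}/\t x_{i,2}$ is a proper Laurent monomial upstairs yet specializes to the monomial $x_i$ downstairs. This is exactly the phenomenon you identified when trying to lift $z$, and it equally blocks a term-by-term transfer of the argument of~\cite{DT}. Note also that the proper Laurent monomial property (Lemma~\ref{plmp}, from~\cite{CKLP}) is available only for skew-symmetric algebras, so it cannot be invoked directly for the skew-symmetrizable algebra of type $C_n^{(1)}$.

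The missing idea --- and the actual content of the paper's proof --- is a symmetrization trick that converts this obstruction into a contradiction upstairs. Suppose some term $\t x^{t}$ of the $\t T$-expansion of a lift $x_{\t C_0}$ specializes to the monomial $x_C$ (or to $x^{\u C,r}$). Then every negative exponent of $\t x^{t}$ in a variable $\t x_{i,s}$ is compensated by an at least as large positive exponent in its partner $\t x_{i,t}$. Let $\t C_0'$ be the image of $\t C_0$ under the deck transformation (swap the two lifts of every curve); it is compatible with $\t C_0$, so $x_{\t C_0\cup\t C_0'}$ is again a cluster monomial, and when $C_0$ contains a bracelet one passes from $x_{Brac_k(\t\gamma)}^{2}$ to $x_{Brac_{2k}(\t\gamma)}$ so as to land in the bracelet basis of $\t A_{n,n}$. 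The product of $\t x^{t}$ with its symmetric partner term is then a term of the $\t T$-expansion of this upstairs basis element having all exponents non-negative, contradicting Lemma~\ref{dtat}(i) when $C$ is bracelet-free, and Lemma~\ref{dtat}(ii) (for $r$ large) when $C$ contains a bracelet. Without this step, or an equivalent replacement, your proof has a genuine gap exactly where you wrote ``once the extremal argument is checked to respect this correspondence.''
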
    

First, let $\B$ be the cluster monomial basis of a skew-symmetric cluster algebra $\A$ of finite type. Let $\x$ be any cluster. Take any cluster variable  $x\notin \x$, and take any cluster monomial $x_{C_0}$ containing $x$ as a factor. Consider the Laurent expansion of $x_{C_0}$ in the cluster $\x$ (we will call it {\em $\x$-expansion} of $x_{C_0}$). 
Following~\cite{CL}, we say that a term of the $\x$-expansion of $x_{C_i}$ is a {\em proper Laurent monomial} if it has negative degree with respect to at least one variable, and the cluster algebra $\A$ has {\em the proper Laurent monomial property} if for any two clusters $\x$ and $\x'$ of $\A$, every monomial in $\x'$ in which at least one factor does not belong to $\x$ is a linear combination of proper Laurent monomials in $\x$.

\begin{lemma}[\cite{CKLP}, Corollary 3.4]
\label{plmp}
Any skew-symmetric cluster algebra has the proper Laurent monomial property.

\end{lemma}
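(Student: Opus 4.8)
Because the statement is quoted verbatim from \cite{CKLP}, the most economical ``proof'' is simply to invoke it; nevertheless I sketch the route I would take, since the property is not a formal triviality.

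I would first reduce to the case of principal coefficients. By the separation-of-additions formula of \cite{FZ4}, the $\x$-expansion of a cluster monomial in an arbitrary coefficient system is obtained from its expansion in $\A_\bullet(B)$ by specialising the coefficients $y_j$. Such a specialisation never alters the $x$-exponent of a monomial and can only merge monomials of equal $x$-degree, so the set of $x$-degrees occurring is unchanged; hence the proper Laurent monomial property holds for $\A$ if and only if it holds for $\A_\bullet(B)$ with the same initial cluster $\x$.

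Working in $\A_\bullet(B)$, I would write each factor through its $\g$-vector and $F$-polynomial: a cluster variable $x'$ satisfies $x'=\x^{\g(x')}F_{x'}(\h y_1,\dots,\h y_n)$, where $\h y_j=y_j\,\x^{\mathbf b_j}$, $\mathbf b_j$ is the $j$-th column of $B$, and $F_{x'}$ has constant term $1$. For a monomial $z'=\prod_i (x_i')^{a_i}$ this yields $z'=\x^{\g(z')}\prod_i F_{x_i'}(\h y)^{a_i}$ with $\g(z')=\sum_i a_i\g(x_i')$, so every term of the $\x$-expansion has $x$-degree $\g(z')+Bv$ for some $v\ge 0$ lying in the support of $\prod_i F_{x_i'}^{a_i}$. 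The claim becomes: if some factor $x_i'\notin\x$, then $\g(z')+Bv$ has a strictly negative coordinate for all such $v$.

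The subtle point, and the reason a factor-by-factor argument cannot work, is cancellation: a product of proper Laurent monomials need not be proper, so one must control all of the exponents $\g(z')+Bv$ at once. The tool that does this in the skew-symmetric case is additive categorification, available because $B$ carries a non-degenerate potential (Derksen--Weyman--Zelevinsky). Cluster monomials correspond to reachable $\tau$-rigid modules (or rigid objects) $M$, and the Caldero--Chapoton cluster character expands $z'$ as a sum over the quiver Grassmannians $\mathrm{Gr}_e(M)$, the exponents interpolating between the $\g$-vector (the subobject $e=0$) and a dominant term (the full module $e=\underline{\dim}\,M$). The decisive, and hardest, step is representation-theoretic: a factor outside $\x$ forces $M$ to have a summand that is not an initial (shifted projective) object, and the interpretation of the $\g$-vector as an index, together with positivity of the relevant $E$-invariant, pushes every exponent $\g(z')+Be$ into the region having a negative coordinate. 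This is precisely the content supplied by \cite{CKLP}; everything preceding it is bookkeeping.
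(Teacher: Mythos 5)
Your proposal is correct and matches the paper exactly: the paper offers no proof of this lemma, citing it verbatim as Corollary~3.4 of \cite{CKLP}, which is precisely what you do. Your supplementary sketch (reduction to principal coefficients, $\gg$-vectors and $F$-polynomials, then the categorification via quiver Grassmannians and the $E$-invariant) is a faithful outline of how \cite{CKLP} actually proves it, so nothing further is needed.
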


Now consider any element $z=\sum_i\lambda_ix_{C_i}$, where $x_{C_i}\in\B$ are cluster monomials. Choose any $C\in\{C_i\}$. Let $\x$ be a cluster containing all the cluster variables that are factors of $x_{C}$. Clearly, the $\x$-expansion of $x_{C}$ is just a monomial. Thus, the following lemma is an immediate corollary of Lemma~\ref{plmp}.

\begin{lemma}[\cite{CL,DT}]
\label{dta}
If $C_i\ne C$ then the $\x$-expansion of $x_{C_i}$ does not contain a term coinciding with $x_C$.
\end{lemma}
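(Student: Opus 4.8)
The plan is to play off the shape of $x_C$ against the output of the proper Laurent monomial property. The key preliminary observation is that, by the very choice of $\x$, every cluster-variable factor of $x_C$ lies in $\x$; hence the $\x$-expansion of $x_C$ is a genuine monomial in the variables of $\x$, i.e. a Laurent monomial whose exponent is \emph{non-negative} in every variable. In particular $x_C$ is \emph{not} a proper Laurent monomial in $\x$. I would then split the argument according to whether or not $x_{C_i}$ is supported inside $\x$.

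First I would suppose that at least one factor of $x_{C_i}$ does not belong to $\x$. Since $x_{C_i}$ is a monomial in the cluster $\x'$ associated with $C_i$, Lemma~\ref{plmp} applies directly and tells us that the $\x$-expansion of $x_{C_i}$ is a $\Z$-linear combination of proper Laurent monomials in $\x$. Each of these has a strictly negative exponent in some variable of $\x$, whereas $x_C$ has all exponents non-negative; therefore $x_C$ cannot occur among the terms of the expansion, which is exactly what is claimed.

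It then remains to treat the case in which every factor of $x_{C_i}$ already lies in $\x$. Here Lemma~\ref{plmp} says nothing, and this is the step I expect to be the real (if mild) obstacle, since it must be handled using the basis property rather than the proper Laurent monomial property. In this case $x_{C_i}$ is itself a monomial in $\x$ and equals its own $\x$-expansion, so the only term that could coincide with $x_C$ is $x_{C_i}$ itself. Because $\B$ is the cluster monomial basis of a finite type cluster algebra, distinct cluster monomials are linearly independent and hence distinct as elements of $\A$; thus $x_{C_i}=x_C$ would force $C_i=C$, contradicting $C_i\neq C$. So $x_{C_i}\neq x_C$, and its single-term expansion does not contain $x_C$. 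Once this case distinction is made explicit, the lemma is indeed immediate from Lemma~\ref{plmp}.
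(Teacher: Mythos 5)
Your proof is correct and takes essentially the same route as the paper, which derives the lemma as an immediate corollary of Lemma~\ref{plmp} after observing that the $\x$-expansion of $x_C$ is a genuine monomial (hence not a proper Laurent monomial). Your explicit second case, where all factors of $x_{C_i}$ already lie in $\x$, is precisely the detail the paper leaves implicit, and your handling of it is valid (indeed, algebraic independence of the variables in $\x$ alone already forces $C_i=C$ there).
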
 

If we know that $z\in\A$ is positive, Lemma~\ref{dta} implies that coefficient $\lambda$ of $x_C$ in the expression for $z$ is non-negative, which implies that $\B$ is an atomic basis (as $C\in\{C_i\}$ was arbitrary).

We are now ready to prove the first theorem. The proof for algebras of type $C_n$ and $B_n$ can be formulated in terms of triangulations, and the proof for algebra $F_4$ can be formulated in terms of triangulated heptagons~\cite{Lam}, but we avoid this language to produce a uniform reasoning (algebra of type $G_2$ has rank two and its cluster monomial basis was studied in~\cite{SZ,LLZ}). 

\begin{proof}[Proof of Theorem~\ref{atomF}]
The basis $\B$ for the corresponding cluster algebra consists of cluster monomials only. To prove the theorem, we need to show that the skew-symmetrizable counterpart of Lemma~\ref{dta} holds. Namely, we want to prove that a Laurent expansion of a cluster monomial does not have other summands being cluster monomials. As in the skew-symmetric case, this proves the atomicity of the basis $\B$.

We prove the counterpart of Lemma~\ref{dta} by contradiction. Suppose it fails, so there exist a cluster $\x$ of $\A$, a cluster monomial $x_C$ in $\x$, and a cluster monomial $x_{C_0}$ containing at least one variable not compatible with factors of $x_C$ such that the $\x$-expansion of  $x_{C_0}$ contains a term $x_C$.  

Consider an unfolding $\t\A$ of $\A$, and denote by $\t\x$ the cluster of $\t\A$ which gives $\x$ after the specialization of variables. By the construction of the unfolding, cluster monomials lift to cluster monomials with literally the same expansions (after identifications of the corresponding variables). Thus, the $\t\x$-expansion of some lift $\t x_{C_0}$ of $x_{C_0}$  contains (after the specialization of variables) the term $x_C$. Our goal is to show that this contradicts Lemma~\ref{plmp}.

Let $\{x_i\}$ be cluster variables of $\x$, and denote $\{\t x_{i,s}\}_{s\in\{1,2\}}$ the lifts of $\{x_i\}$ (some of $x_i$ have only one lift, we will denote these by $\{\t x_i\}$). Denote by $\{u_i\}$ the factors of $x_{C_0}$, and by $\{\t u_{i,s}\}_{s\in\{1,2\}}$ the lifts of $\{u_i\}$ (again, we will denote by $\t u_i$ those who are the unique lifts).

Denote by $\t x_{C_0}^1$ any lift of $x_{C_0}$
(note that $\t x_{C_0}^1$ is a cluster monomial of $\t\A$). Take the term in the $\t \x$-expansion of $\t x_{C_0}^1$ which becomes $x_C$ after specialization of variables, this term should have the form
$$\prod\limits_{j\in I}\t x_j^{r_j}\prod\limits_{i\notin I}\frac{\t x_{i,s_i}^{k_i}}{\t x_{i,t_i}^{m_i}},$$
where $I$ is the index set of variables of $\x$ with a unique lift, $s_i\ne t_i$, $r_j\ge 0$, and $k_i\ge m_i\ge 0$.

Now consider another lift $\t x_{C_0}^2$ of the cluster monomial $x_{C_0}$ obtained from $\t x_{C_0}^1$ in the following way: for every factor $u_i$ we swap all the lifts $\t u_{i,1}$ and $\t u_{i,2}$. Then we obtain a cluster monomial of $\t\A$ compatible with $\t x_{C_0}^1$, and its $\t\x$-expansion can be obtained from  the $\t \x$-expansion of $\t x_{C_0}^1$ by swapping all the $\t x_{i,1}$ and $\t x_{i,2}$ (this can be easily seen for individual cluster variables $\t u_{i,1}$ and $\t u_{i,2}$ by, e.g., using cluster automorphisms technique~\cite{Law}, and thus holds for cluster monomials as well). Therefore, the $\t \x$-expansion of $\t x_{C_0}^2$ has a term 
$$\prod\limits_{j\in I}\t x_j^{r_j}\prod\limits_{i\notin I}\frac{\t x_{i,t_i}^{k_i}}{\t x_{i,s_i}^{m_i}}.$$

Since $\t x_{C_0}^1$ and $\t x_{C_0}^2$ are cluster monomials in the same cluster of $\t\A$, their product is also a cluster monomial, and its $\t\x$-expansion contains the product of the two terms above, which is not a proper Laurent monomial since $k_i\ge m_i$. This contradicts Lemma~\ref{plmp}.

\end{proof}

For the $\t A_{p,q}$ case the situation is a bit more involved. Let $\B$ be the bracelet basis of a cluster algebra $\A$ of type $A_{p,q}$, take an element $y=\sum_i\lambda_ix_{C_i}$, where $x_{C_i}\in\B$. Choose any $C\in\{C_i\}$. Let $T$ be any triangulation of an annulus containing all the arcs from the multicurve $C$ (note that $C$ may also contain a bracelet). Now consider the $T$-expansions of all $x_{C_i}$. 

This time $x_{C}$ is either  a monomial in these variables (if $C$ contained arcs only), or a sum of Laurent monomials with positive coefficients (if $C$ contains a bracelet). In the latter case, there are infinitely many triangulations containing all the arcs from the multicurve $C$. 

\begin{lemma}[\cite{DT}]
\label{dtat}
(i) If $C_i$ is not compatible with $T$, and $C$ contains no bracelets, then the $T$-expansion of $x_{C_i}$ is a proper Laurent monomial. In particular, the $T$-expansion of $x_{C_i}$ does not contain any term coinciding with $x_{C}$.

(ii) Let $C$ contain a bracelet. If $C_i\ne C$, then there exist a sequence of triangulations $T_r$ such that each $T_r$ contains all arcs of $C$,  $r_0\in\N$, and a term $x^{\u C,r}$ of $T_r$-expansion of $x_C$ such that for any $r>r_0$ the $T_r$-expansion of $x_{C_i}$ does not contain $x^{\u C,r}$.
\end{lemma}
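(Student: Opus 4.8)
The plan is to read both parts of the lemma as one \emph{separation} statement: I want to pin down, inside the $T$- (resp.\ $T_r$-) expansion of $x_C$, a Laurent monomial that the expansion of no other basis element $x_{C_i}$ can contain. Since $\t A_{p,q}$ comes from an annulus it is skew-symmetric, so the proper Laurent monomial property (Lemma~\ref{plmp}) is available throughout. The dichotomy in the statement reflects the fact that $x_C$ is an honest monomial in the $T$-cluster exactly when $C$ carries no bracelet; once $C$ contains a bracelet, $x_C=T_k(x_\gamma)\prod_\delta x_\delta$ is a genuine Laurent polynomial and one fixed cluster no longer isolates it.

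For part (i) note that $C\subseteq T$, so $x_C=\prod_{\delta\in C}x_\delta^{c_\delta}$ has only non-negative exponents in the $T$-cluster, and it suffices to show that \emph{every} term of the $T$-expansion of an incompatible $x_{C_i}$ is a proper Laurent monomial. If $C_i$ consists of arcs alone, some arc lies outside $T$ and Lemma~\ref{plmp} gives the claim at once. If $C_i$ contains a bracelet $Brac_{k'}(\gamma)$, write $x_{C_i}=x_{Brac_{k'}(\gamma)}\prod_\eta x_\eta$; the arcs $\eta$ are $\mathcal C$-compatible with the loop, hence peripheral, so they are never crossed by $\gamma$ and appear only with non-negative exponents. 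On the annulus the only arcs of $T$ that $\gamma$ crosses are through-arcs, and by the band-graph expansion of a bracelet~\cite{MSW2} every term of $x_{Brac_{k'}(\gamma)}$ already has a strictly negative exponent at one of these crossed through-arcs; multiplying by the peripheral $x_\eta$ leaves that exponent untouched, so every term of $x_{C_i}$ stays proper and differs from the non-negative monomial $x_C$.

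Part (ii) is the substantive case. Fix a triangulation $T_0$ containing the arcs of $C$ and adapted to $\gamma$, and put $T_r=D^r(T_0)$, where $D$ is the Dehn twist about $\gamma$; the peripheral arcs of $C$ are fixed by $D$, so each $T_r$ still contains them. The idea is to follow the extremal ``maximal winding'' term $x^{\u C,r}$ of $T_k(x_\gamma)\prod_\delta x_\delta$ in the $T_r$-expansion: since the bracelet wraps $\gamma$ exactly $k$ times, the exponents of $x^{\u C,r}$ along the twisted through-arcs move linearly in $r$ with a slope determined by $k$ together with the peripheral part of $C$. One then checks that this extremal term records precisely the pair (multiplicity $k$, peripheral arcs of $C$): a rival $C_i$ carrying $Brac_{k'}(\gamma)$ with $k'\ne k$ has extremal winding of a different slope; a rival with the same $k$ but different peripheral arcs shifts $x^{\u C,r}$ by a fixed nonzero vector; and a pure cluster monomial $C_i$ cannot imitate a genuine $k$-fold bracelet, as one sees from Lemma~\ref{plmp} combined with the $\g$-vector description of loops in Lemma~\ref{g-loops}. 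Taking $r_0$ beyond all the finitely many thresholds so produced yields the assertion.

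The hard part is the uniform control in part (ii): one must bound, independently of the infinitely many candidates $C_i$, how their extremal $T_r$-terms drift as $r\to\infty$ and verify that none collides with $x^{\u C,r}$ for all large $r$. This is exactly the asymptotic computation performed for $\t A_{p,q}$ in~\cite{DT}; it rests on the explicit Chebyshev/band-graph formula for $x_{Brac_k(\gamma)}$ and on the linearity of shear coordinates under the Dehn twist, which together turn the ``winding slope'' into a well-defined separating invariant. I would therefore finish part (ii) by invoking that computation of~\cite{DT}, the purpose of the present setup being to transport this annulus statement, through the unfolding $\t A_{n,n}\to C_n^{(1)}$, into the proof of Theorem~\ref{atom}.
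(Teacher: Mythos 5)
The paper never proves this lemma: it is quoted from Dupont--Thomas (hence the citation \cite{DT} in the lemma's header), and the paper only recalls the constructions of $T_r$ and of the term $x^{\u C,r}$ later, inside the proof of Theorem~\ref{atom}, where the lemma is transported through the unfolding $\t A_{n,n}\to C_n^{(1)}$. So your closing move for part (ii) --- invoking the asymptotic computation of \cite{DT} --- is exactly the paper's own treatment, and your reading of the statement as a separation device feeding into Theorem~\ref{atom} is the right one.

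Where you do attempt a self-contained argument, however, there is a genuine gap. In part (i), for a rival $C_i$ consisting of a bracelet $Brac_{k'}(\gamma)$ together with arcs $\eta$, your step ``multiplying by the peripheral $x_\eta$ leaves that exponent untouched'' is valid only when every $\eta$ lies in $T$, so that $x_\eta$ is a cluster variable of the $T$-cluster distinct from the through-arc variables. If some $\eta\notin T$, then $x_\eta$ must itself be expanded in the $T$-cluster, and the numerator terms of that expansion carry \emph{positive} powers of the through-arc variables of $T$: the snake graph of $\eta$ crosses through-arcs, and the sides of the crossed triangles are again through-arcs. Properness is not preserved under multiplication of Laurent monomials (e.g.\ $(x_1/x_2)\cdot(x_2/x_1)=1$), so one must rule out term-by-term cancellation between the strictly negative exponents supplied by the bracelet factor and these positive exponents. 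Lemma~\ref{plmp} cannot close this, since it concerns cluster monomials only; this mixed case is precisely part of what \cite{DT} actually proves. The same status applies to your part (ii) heuristics (winding ``slopes'', fixed nonzero shifts): plausible, but not proofs, as you acknowledge by deferring to \cite{DT}. None of this undermines the paper, which claims no proof of the lemma; but as a reconstruction of \cite{DT}'s argument, your part (i) as written is incomplete.
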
 
We will recall the construction of $T_r$ and $x^{\u C,r}$ during the proof.

As in the finite type case, Lemma~\ref{dtat} implies that $\B$ is an atomic basis.


\begin{proof}[Proof of Theorem~\ref{atom}]

Similarly to the proof of Theorem~\ref{atomF}, we now need a counterpart of Lemma~\ref{dtat} to hold. The corresponding orbifold is  a disc with two orbifold points and $n$ marked points, the bracelet basis for the corresponding cluster algebra contains cluster monomials and one family of bracelets.  If we take a double cover of the disc branching in orbifold points only, we obtain an annulus with $n$ marked points on each boundary component, so that the unfolding is $\t A_{n,n}$. Let $x_C$ be an element of the basis $\B$. 

If $C$ does not contain bracelets, then the argument is similar to one for $C_n$. Take any basis element $x_{C_0}$ and any triangulation $T$ containing all the arcs of $C$. Let $\{\gamma_i\}$ be the set of all arcs in $T$, and let $\{\t \gamma_{i,s}\}_{s\in\{1,2\}}$ be the lifts of $\{\gamma_i\}$ (the two pending arcs of $T$ have only one lift each). Choose any lift ${\t C_0}$ of ${C_0}$, take any term $\t x^t$ of $\t T$-expansion of $x_{\t C_0}$. By Lemma~\ref{dtat}(i), this term has negative degrees with respect to some $\{\t\gamma_{i,1}\}_{i\in I_1}$ and $\{\t\gamma_{i,2}\}_{i\in I_2}$. If we assume that after identification of ${\t \gamma_{i,1}}$ and ${\t \gamma_{i,2}}$ the term $\t x^t$ will coincide with $x_C$ then it should become a monomial, which implies that every degree of $\t x^t$ with respect to $\{\t\gamma_{i,2}\}_{i\in I_1}$ and $\{\t\gamma_{i,1}\}_{i\in I_2}$ have to be positive and greater or equal to the moduli of the corresponding negative ones. Assume that this is the case.

First, assume that $x_{C_0}$ is a cluster monomial. Then we can consider the multicurve $\t C_0'$ symmetric to $\t C_0$, i.e. we substitute all the entries of ${\t \gamma_{i,1}}$ by ${\t \gamma_{i,2}}$ and vice versa for every $i$. Note that  $\t C_0'$ is compatible with  $\t C_0$, and thus, we can consider a cluster monomial $x_{\t C_0\cup\t C_0'}$ and the term of its $\t T$-expansion obtained as the product of $\t x^t$ and $\t x^{t'}$, where $\t x^{t'}$ is the term symmetric to  $\t x^{t}$. By the assumption, the term $\t x^t\t x^{t'}$ of the $\t T$-expansion of this monomial will have positive degree with respect to any variable, which contradicts Lemma~\ref{dtat}(i).

Now, assume that ${C_0}$ contains a bracelet $Brac_k(\gamma)$, where $\gamma$ is the semi-closed loop joining two orbifold points. Then $\t C_0$ contains a bracelet  $Brac_k(\t\gamma)$. Note that every term of the $\t T$-expansion of $x_{Brac_k(\t\gamma)}^2$ can be considered as some term of the $\t T$-expansion of $x_{Brac_{2k}(\t\gamma)}$. Thus, we can also consider $x_{\t C_0\cup\t C_0'}$: although it is not an element of the bracelet basis of $\t A_{n,n}$, all the terms of its $\t T$-expansion are terms of the $\t T$-expansion of an element of the bracelet basis  obtained from  $x_{\t C_0\cup\t C_0'}$ by dividing by $x_{Brac_k(\t\gamma)}^2$ and multiplying by $x_{Brac_{2k}(\t\gamma)}$. Considering the same term as in the previous case, we see that it has positive degree with respect to any variable, which contradicts Lemma~\ref{dtat}(i).      

Assume now that $C$ consists of a bracelet $Brac_m(\gamma)$ and a collection of arcs $\u C$. Following~\cite{DT}, define triangulations $T_r$ containing all the arcs of $C$ as follows. There exists a marked point (call it $O$) which can be joined with both orbifold points without intersecting arcs of $C$. Add to arcs of $C$ two shortest pending arcs ending at $O$ and an arc with both ends at $O$, then extend this to any triangulation, this will be $T_0$. To obtain $T_r$, apply a half of the Dehn twist to the loop around two orbifold points $r$ times. Denote the pending arcs by $\alpha$ and $\beta$. An easy computation shows that the $T$-expansion of $x_{Brac_m(\gamma)}$ has a term $x_{\beta}^m/x_{\alpha}^m$ (see also~\cite{DT}). Consider the term $x^{\u C,r}=x_{\u C}x_{\beta}^m/x_{\alpha}^m$ of the $T_r$-expansion of $x_C$. The lifts $\t T_r$ and $x^{\t{\u{C}},r}$ on the annulus are exactly the triangulation and the term used in Lemma~\ref{dtat}(ii), see~\cite{DT}. We now want to show that given $C_0$, for $r$ large enough no term of the $T_r$-expansion of $x_{C_0}$ coincides with $x^{\u C,r}$. Since any element of cluster algebra is a finite linear combination of elements of $\B$, this will complete the proof.

We proceed in exactly the same way as in the previous case. Take any lift $\t C_0$ of $C_0$ and assume that some term of the $\t T_r$-expansion of $x_{\t C_0}$ coincides with  $x^{\u C,r}$ after specialization of variables. As before, consider the  multicurve $\t C_0'$ symmetric to $\t C_0$ and the corresponding term of $x_{\t C_0'}$, take the product of these two terms: the result is a term of  the $\t T_r$-expansion of an element of the bracelet basis of $\t A_{n,n}$ obtained from  $x_{\t C_0\cup\t C_0'}$ by dividing by $x_{Brac_m(\t\gamma)}^2$ and multiplying by $x_{Brac_{2m}(\t\gamma)}$. By the assumption, this will have a form $x_{\t{\u{C}}\cup \t{\u{C}}}x_{\beta}^{2m}/x_{\alpha}^{2m}$ for some lift $\t{\u{C}}$ of $\u{C}$. Thus, we see that $\t T_r$-expansion of some element of the bracelet basis of $\t A_{n,n}$ contains a term coinciding with $x^{\t{\u C'},r}$, where $\t C'=\t{\u C}\cup\t{\u C}\cup Brac_{2m}(\t\gamma)$. According to Lemma~\ref{dtat}(ii), this cannot hold for $r$ large enough. This completes the proof of the theorem.       

\end{proof}

\end{document}